
\documentclass{amsart}

\usepackage{datetime}
\usepackage{amsfonts}
\usepackage{amsmath}
\usepackage{amssymb}
\usepackage{amsthm}
\usepackage{eucal}
\usepackage{graphicx,color}

\definecolor{duckgreen}{rgb}{0.0,0.309,0.153}
\definecolor{badgerred}{rgb}{0.715,0.004,0.004}
\definecolor{burntorange}{rgb}{0.801,0.332,0.0}
\usepackage{hyperref}
\hypersetup{colorlinks,
            filecolor=black,
            linkcolor=badgerred,
            citecolor=duckgreen,
            urlcolor=burntorange,
            bookmarksopen=true}

\theoremstyle{plain}

\newtheorem{lemma}{Lemma}

\newtheorem*{main}{Main Theorem}
\theoremstyle{definition}

\newtheorem*{remark}{Remark}
\numberwithin{equation}{section}

\newcommand{\bry}{\mathfrak{B}}
\newcommand{\Dh}{\,e^{-\frac{\sigma^2}{4}}\,\mathrm{d}\sigma}
\newcommand{\ddt}[1]{\left.\frac{\partial}{\partial t}\right|_{#1}}

\newcommand{\dfdt}[2]{\left.\frac{\partial #1}{\partial t}\right|_{#2}}
\newcommand{\dfdtau}[2]{\left.\frac{\partial #1}{\partial \tau}\right|_{#2}}

\newcommand{\eh}{\approx}
\newcommand{\gk}{\gamma_k}
\newcommand{\hs}{\mathfrak{h}}
\newcommand{\ham}{\hat{A}^-}
\newcommand{\hap}{\hat{A}^+}
\newcommand{\hapm}{\hat{A}^\pm}
\newcommand{\hcQ}{ {\hat\cQ}}
\newcommand{\lh}{\left(}

\newcommand{\nh}{\|_{\mathfrak{h}}}
\newcommand{\params}{\{b_k,W\}}
\newcommand{\pd}[1]{\frac{\partial}{\partial #1}}
\newcommand{\pl}{\partial}
\newcommand{\rh}{\right)_{\mathfrak{h}}}
\newcommand{\tc}{\tau_{11}}
\newcommand{\tz}{\tilde{z}}

\newcommand{\ve}{\varepsilon}
\newcommand{\vh}{\check{v}}
\newcommand{\vp}{\varphi}
\newcommand{\zi}{z^{\rm int}}
\newcommand{\zp}{{\zeta}}
\newcommand{\zt}{z^{\rm tip}}

\DeclareMathOperator{\Rm}{Rm}
\newcommand{\cA}{\mathcal{A}}
\newcommand{\cB}{\mathcal{B}}
\newcommand{\cD}{\mathcal{D}}
\newcommand{\cE}{\mathcal{E}}

\newcommand{\cL}{\mathcal{L}}
\newcommand{\cO}{\mathcal{O}}
\newcommand{\cP}{\mathcal{P}}
\newcommand{\cQ}{\mathcal{Q}}
\newcommand{\cS}{\mathcal{S}}
\newcommand{\cT}{\mathcal{T}}
\newcommand{\rd}{\mathrm{d}}
\newcommand{\tA}{A_2}
\newcommand{\dA}{\hat A}
\newcommand{\hb}{B_8}
\newcommand{\Amin}{{A_2^{\rm min}}}
\newcommand{\taumin}{\tau_1}
\newcommand{\tzz}{\tilde\zeta}
\newcommand{\bt}{\bar\tau}


\newcommand{\R}{{\mathbb{R}}}

\begin{document}

\title{Degenerate neckpinches in Ricci flow}
\author{Sigurd B.~Angenent}
\address[Sigurd Angenent]{University of Wisconsin-Madison}
\email{angenent@math.wisc.edu}
\urladdr{http://www.math.wisc.edu/\symbol{126}angenent/}

\author{James Isenberg} \address[James Isenberg]{University of Oregon}
\email{isenberg@uoregon.edu}
\urladdr{http://www.uoregon.edu/\symbol{126}isenberg/}

\author{Dan Knopf} \address[Dan Knopf]{University of Texas at Austin}
\email{danknopf@math.utexas.edu}
\urladdr{http://www.ma.utexas.edu/users/danknopf}
\thanks{SBA thanks NSF for support in DMS-0705431.  JI thanks NSF for
    support in PHY-0968612.  DK thanks NSF for support in DMS-0545984.}

\begin{abstract}
  In earlier work \cite{AIK11}, we derived formal matched asymptotic
  profiles for families of Ricci flow solutions developing Type-II
  degenerate neckpinches.  In the present work, we prove that there
  do exist Ricci flow solutions that develop singularities modeled on
  each such profile. In particular, we show that for each positive
  integer $k\geq3$, there exist compact solutions in all dimensions
  $m\geq3$  that become singular at the rate $(T-t)^{-2+2/k}$.
\end{abstract}

\maketitle

\setcounter{tocdepth}{2}
\tableofcontents

\section{Introduction}

While the work of Gu and Zhu \cite{GZ08} establishes the existence of
Ricci flow solutions that form Type-II singularities, it tells us very
little regarding the details of the evolving geometries of such
solutions. The numerical work of Garfinkle with one of the authors
\cite{GI03, GI07}, together with the formal matched asymptotics
derived by all three authors \cite{AIK11}, strongly suggest what some
of these details might be, at least for those solutions
$(\cS^{n+1},g(t))$ that are rotationally symmetric and involve a
degenerate neckpinch. However, these works do not prove that solutions
with the prescribed behavior exist. In the present work, we prove that
indeed, for each of the prescribed models of the evolving geometry
near the singularity discussed in our work on matched asymptotics
\cite{AIK11}, there is a Ricci flow solution that asymptotically
approaches this model.  Specifically, we prove the main conjecture
stated in our earlier work \cite[\S7]{AIK11}. It follows from the
results proved here and stated below that in every dimension $n+1 \ge
3$ and for each integer $k\ge 3$, there is a Ricci flow solution that
develops a degenerate neckpinch singularity with the characteristic
profile (locally) of a Bryant soliton, and with the rate of curvature
blowup given by
\begin{equation}
  \label{rate}
  \sup_{x \in \cS^{n+1}}  |\Rm(x,t)|\sim\frac{C}{(T-t)^{2-2/k}},
\end{equation}
for $t$ approaching the singularity time $T<\infty$, and for some
constant $C$.

In determining that there exist Ricci flow solutions that form Type-II
singularities, Gu and Zhu \cite{GZ08} show that there are solutions in
which the curvature of the evolving metrics satisfies
$\limsup_{t\nearrow T}\{(T-t)\sup_{x\in\cS^{n+1}}|\Rm(x,t)|\}=\infty$.
Their work does not, however, determine the specific rate of curvature
blowup in these solutions.  Enders \cite{Enders07} has defined
\emph{Type-A} singularities as those with curvature blowing up at the
rate $(T-t)^{-r}$, with $r \in [1,\frac{3}{2})$, and has questioned
whether there are any compact solutions outside this class.  Our work
shows that indeed there are: as noted above, we prove that there exist
solutions with curvature blowing up at the rate $(T-t)^{-2+2/k}$ for
all positive integers $k \ge3$; all but the $k=3$ solutions fall
outside Enders' Type-A class of solutions.

Noting the existence of solutions with these discrete curvature blowup
rates, we are led to ask if there exist (compact) solutions ---
degenerate neckpinch or otherwise --- that exhibit blowup rates other
than these discrete values. Our work does not shed light on this
question.  We do note that for two-dimensional Ricci flow, in which
case the flow is conformal and the conformal factor evolves by
logarithmic fast diffusion, $u_t=\Delta\log u$, Daskalopoulos and
Hamilton \cite{DH04} prove that there exist complete noncompact
solutions that form singularities at the rate $(T-t)^{-2}$.

\medskip

The class of metrics we consider in this work are
$\rm{SO}(n+1)$-invariant Riemannian metrics on the sphere
$\cS^{n+1}$. We work with Ricci flow solutions $g(t)$ for such initial
data, and focus on solutions that develop a singularity at one or
both of the poles at finite time $T$ (proving that such solutions do
exist). To define what it means for a singularity to be a neckpinch,
we recall that a sequence $\{(x_j,t_j)\}_{j=0}^\infty$ of points and
times in a Ricci flow solution is called a \emph{blow-up sequence} at
time $T$ if $t_j \nearrow T$ and if $|\Rm(x_j,t_j)|\rightarrow\infty$
as $j\rightarrow\infty$; such a sequence has a corresponding
\emph{pointed singularity model} if the sequence of parabolic dilation
metrics $g_j(x,t):= |\Rm(x_j,t_j)|\,g\big(x, t_j+
|\Rm(x_j,t_j)|^{-1}\,t\big)$ has a complete smooth limit. We say that
a Ricci flow solution develops a \emph{neckpinch singularity} at time
$T$ if there is some blow-up sequence at $T$ whose corresponding
pointed singularity model exists and is given by the self-similarly
shrinking Ricci soliton on the cylinder $\mathbb{R}\times\cS^n$. We
call a neckpinch singularity \emph{nondegenerate} if every pointed
singularity model of any blowup sequence corresponding to $T$ is a
cylindrical solution, and we call it \emph{degenerate} if there is at
least one blowup sequence at $T$ with a pointed singularity model that
is not a cylindrical solution.

Rotationally symmetric nondegenerate neckpinches have been studied
extensively by Simon \cite{Simon00} and by two of the authors
\cite{AK04, AK07}. In the latter works, it is shown that there is an
open set of (rotationally symmetric) compact initial manifolds whose
Ricci flows develop nondegenerate neckpinch singularities, all of
which are Type-I in the sense that $\limsup_{t\nearrow
  T}\{(T-t)\sup_{x\in\cS^{n+1}}|\Rm(x,t)|\}<\infty$.  Further, in the
presence of reflection symmetry, \cite{AK07} provides a detailed set
of models for the asymptotic behavior of the geometry near a
developing nondegenerate neckpinch, with those models collectively
serving as attractors for these flows.

Unlike the formation of nondegenerate neckpinches, the formation of
degenerate neckpinches in Ricci flow is expected to be an unstable
property. This is evident in \cite{GI03, GI07} as well as in
\cite{GZ08}: in all three of these works, one studies flows that
develop degenerate neckpinches by considering one-parameter families
of initial data such that for all values of the parameter above a
threshold value, the Ricci flow solutions develop nondegenerate
neckpinches, while for all parameter values below that value, there is
no neckpinch singularity. The flows with initial data \emph{at} the
threshold value of the parameter are the ones that develop degenerate
neckpinches. This instability leads us to use the somewhat indirect
\emph{Wa\.{z}ewski retraction method} \cite{Wazewski47} to explore the
asymptotic behavior of the geometry near these degenerate neckpinches
as they form. We discuss this in detail below.

The matched asymptotics derived in \cite{AIK11} rely heavily on the
imposition of a series of \emph{Ansatz} conditions to characterize the
formal solutions of interest. (It is through these \emph{Ansatz}
conditions that one builds the formation of degenerate neckpinches
into the formal solutions.) By contrast, no such \emph{a priori}
assumptions are needed (or used) in the present work.  Rather, having
determined in \cite{AIK11} the nature and the explicit approximate
forms (in regions near the degenerate neckpinch) for our formal
solutions, we show here without imposing any further assumptions that
there exist Ricci flow solutions that approach each of the formal
solutions. It follows that degenerate neckpinches form in these Ricci
flow solutions.

Each of the solutions we consider here is characterized by an integer
$k \ge 3$. As our results show, there is at least a one-parameter
family of solutions corresponding to each value of $k$. In fact, our
construction in Section~\ref{InitialData} below reveals that degenerate
neckpinches form in solutions starting from a set of initial data of
codimension-$k$ in the space of $\rm{SO}(n+1)$-invariant solutions.
Besides determining the rate of curvature blowup, the integer $k$ also
characterizes to an extent the detailed asymptotic behavior of the
solution in a neighborhood of the singularity. While we discuss the
details of this characterization below (in
Section~\ref{sec:form-asympt-again}), we note here one important
feature that depends only on the parity of $k$. If $k$ is even, then
the solution is reflection symmetric across the equator, neckpinch
singularities occur simultaneously at both poles, and the volume of
$(\cS^{n+1},g(t))$ approaches zero at the time of the singularity. If
on the other hand $k$ is odd, then the neckpinch occurs at one pole
only, and the volume of $(\cS^{n+1},g(t))$ remains positive at the time
of the singularity. We note that in either case, for $t<T$, the
curvature has local maxima both at one of the poles and at a nearby
latitude sphere $\cS^n$ where the neck is maximally pinched. As $t$
approaches the singular time $T$, the distance between the neckpinch
sphere and the pole approaches zero, and the curvature becomes
infinite simultaneously (albeit at different rates) both at the
neckpinch sphere and at the pole.

A detailed statement of our main results depends on the details of the
asymptotic behavior of the formal model solutions. Referring to our
discussion of this behavior below in Section~\ref{sec:form-asympt-again},
we can state our main theorem as follows:

\begin{main} For every integer $k\geq3$ and every real number $b_k<0$,
  there exist rotationally symmetric Ricci flow solutions $(\cS^{n+1},g(t))$
  in each dimension $n+1\geq3$ that develop degenerate neckpinch singularities
  at $T<\infty$. For each choice of $n, k,$ and $b_k$, the corresponding
  solutions have distinct asymptotic behavior.

  In each case, the singularity is Type-II --- slowly forming --- with
  \[
  \sup_{x\in\cS^{n+1}}|\Rm(x,t)|=\frac{C}{(T-t)^{2-2/k}}
  \]
  attained at a pole, where $C=C(n,k,b_k)$.

  Rescaling a solution corresponding to $\{n,k,b_k\}$ so that the
  distance from the pole dilates at the Type-II rate
  $(T-t)^{-(1-1/k)}$, one finds that the metric converges uniformly on
  intervals of order $(T-t)^{1-1/k}$ to the steady Bryant soliton.

  Rescaling any solution so that the distance from the smallest neck
  dilates at the parabolic rate $(T-t)^{-1/2}$, one finds that the
  metric converges uniformly on intervals of order $\sqrt{T-t}$ to the
  shrinking cylinder soliton.

  Furthermore, the solutions exhibit the precise asymptotic behavior
  summarized in Section~\ref{sec:form-asympt-again}, and they satisfy
  the estimates summarized in Section~\ref{Outline}.
\end{main}

\medskip

Since the formal model solutions play a major role in this work, after
setting up the needed coordinates and metric representations for our
analysis in Section~\ref{ChooseCoordinatesWisely}, we carefully review
the nature of these formal solutions and their matched asymptotic
expansions in considerable detail in
Section~\ref{sec:form-asympt-again}. While the formal solutions we
discuss here are the same as those analyzed in \cite{AIK11}, we note
that here we use somewhat altered coordinate representations in
certain regions near the pole.  In Section~\ref{Structure}, we
describe the general structure of the proof of the main theorem and
outline its key steps.  The technical work to carry out these steps is
detailed in Sections~\ref{PuttingUpBarriers}--\ref{InitialData}.

\section{Coordinates for the four regions}
\label{ChooseCoordinatesWisely}
As noted above, in this paper we study ${\rm SO}(n+1)$-invariant
metrics $g$ evolving by Ricci flow on $\cS ^{n+1}\times[0,T_0)$, where
$T_0=T_0(g_0)\in(0,\infty]$ for initial data $g_0$. Each such metric
may be identified with functions
$\vp,\psi:(-1,1)\times[0,T_0)\rightarrow\R_+$ via
\begin{equation}
  g(x,t) = \vp^2(x,t)\,(\rd x)^2 + \psi^2(x,t)\,g_{{\rm can}},
  \label{MetricAnywhere}
\end{equation}
where $g_{{\rm can}}$ is the canonical round unit-radius metric on
$\cS^n$.  Smoothness at the poles requires that $\vp,\psi$ satisfy the
boundary condition~\eqref{BoundaryConditions} given below.  Under
Ricci flow, the quantities $\vp$ and $\psi$ evolve by
\begin{align*}
  \vp_t &= n\Big(\frac{\psi_{xx}}{\vp\psi}
  -\frac{\vp_x \psi_x}{\vp^2 \psi}\Big),\\
  \psi_t &= \frac{\psi_{xx}}{\vp^2} -\frac{\vp_x \psi_x}{\vp^3}
  +(n-1)\frac{\psi_x^2}{\vp^2 \psi} -\frac{n-1}{\psi},
\end{align*}
respectively. This system is only weakly parabolic, reflecting its
invariance under the full diffeomorphism group. Below, we remedy this
by suitable choices of gauge.

We are interested in proving the existence of solutions that are close
to the formal solutions constructed in \cite{AIK11}; accordingly, we
follow the notation of \cite{AIK11} in large measure.  That
paper describes solutions in four regions, the \emph{outer},
\emph{parabolic}, \emph{intermediate}, and \emph{tip}, using either of
two coordinate systems. We do the same here, although we choose the
alternate coordinate system to describe the intermediate and outer
regions. For clarity, we review both systems below.

\subsection{Coordinates for the parabolic region}
\label{ParabolicCoordinates}
We call the ${\rm SO}(n+1)$-orbit $\{0\}\times\cS ^n$ the ``equator''
and denote the signed metric distance from it by
\[
s(x,t):=\int_{0}^{x}\vp(\hat{x},t)\,\rd \hat{x}.
\]
Then the metric~\eqref{MetricAnywhere} may be written as
\begin{equation}
  g=(\rd  s)^2 +\psi^{2}(s(x,t),t)\,g_{{\rm can}},
  \label{MetricOut}
\end{equation}
where $\rd $ denotes the spatial differential. We write
\[
\ddt x \quad \text{and} \quad \ddt s
\]
to indicate time derivatives taken with $x$ and $s$ held fixed,
respectively.  With this convention, one has the commutator
\[
\left[\ddt x,\pd s\right] =-n\frac{\psi_{ss}}{\psi} \pd s
\]
and the relation
\[
\ddt x = \ddt s + nI[\psi] \pd s,
\]
where $I[\psi]$ is the nonlocal term
\[
I[\psi](s,t) := \int_0^s
\frac{\psi_{\hat{s}\hat{s}}(\hat{s},t)}{\psi(\hat{s},t)}\,\rd \hat{s}.
\]
In terms of these coordinates, the evolution of the
metric~\eqref{MetricOut} by Ricci flow is determined by the scalar
equation
\begin{equation}
  \dfdt \psi x =\psi_{ss}-(n-1)\frac{1-\psi_s^2}{\psi}.
  \label{psi-evolution}
\end{equation}
Smoothness at the poles requires that $\psi$ satisfy the boundary
conditions
\begin{equation}
  \left. \psi _s \right|_{x=\pm 1}=\mp 1.
  \label{BoundaryConditions}
\end{equation}
The quantity $\vp$, which is effectively suppressed in these
coordinates, evolves by
\[
\dfdt {(\log\vp)} x =n\frac{\psi_{ss}}{\psi}.
\]

\subsection{Coordinates for the other regions}
\label{IntermediateCoordinates}
We employ a different coordinate system in the other regions. In any
region where $\psi_s\neq0$ (e.g., in the neighborhood of the north
pole $x=1$ where $\psi_s<0$) one may use $\psi$ as a coordinate, writing
\begin{equation}
  g=z(\psi,t)^{-1} (\rd  \psi)^2 +\psi^{2}\,g_{{\rm can}},
  \label{MetricIn}
\end{equation}
where
\[
z(\psi(s,t),t):=\psi_s^2(s,t).
\label{Define-z}
\]
The evolution of the metric~\eqref{MetricIn} is determined by
\begin{equation}
  \dfdt z \psi =\cE _\psi[z],
  \label{eq:z-evolution}
\end{equation}
where $\cE _\psi$ is the quasilinear (but purely local) operator
\begin{equation}
  \cE _\psi [z] :=z z_{\psi\psi} -\frac{1}{2}z_\psi^2 +
  (n-1-z)\frac{z_{\psi}}{\psi} +2(n-1)\frac{(1-z)z}{\psi^2}.
  \label{EDefn}
\end{equation}
We can split the operator $\cE $ into a linear and a quadratic
component,
\[
\cE _\psi[z] = \cL _\psi[z] + \cQ _\psi[z],
\]
where
\begin{subequations} \label{eq:linear-and-quadratic-parts}
  \begin{align}
    \cL _\psi[z] &:=(n-1)\Bigl\{\frac{z_{\psi}}{\psi} +2\frac{z}{\psi^2}\Bigr\},\\
    \cQ _\psi[z] &:= z z_{\psi\psi} -\frac{1}{2}z_\psi^2
    -\frac{zz_{\psi}}{\psi} - 2(n-1)\frac{z^2}{\psi^2},
  \end{align}
\end{subequations}
respectively. The quadratic part also defines a symmetric bilinear
operator,
\begin{multline}
  \hcQ_\psi {[z_1,z_2]} := \frac 12 \bigl\{z_1 (z_2)_{\psi\psi} +
  z_2(z_1)_{\psi\psi} -(z_1)_{\psi}(z_2)_{\psi}\bigr\}
  \\
  - \frac12\frac{z_1(z_2)_{\psi} + z_2(z_1)_{\psi}}{\psi} -
  2(n-1)\frac{z_1z_2}{\psi^2}.
  \label{Q-bilinear-def}
\end{multline}
In terms of this notation, one has $\cQ _\psi[z] = \hcQ_\psi{[z,z]}$.

\section{The formal solution revisited}
\label{sec:form-asympt-again}

In \cite{AIK11}, we present a complete formal matched asymptotic
treatment of a class of rotationally symmetric Ricci flow formal
solutions that form degenerate neckpinches. These formal solutions
serve as the approximate models which the solutions we discuss here
asymptotically approach. Since we find it useful in our present
analysis to work with different coordinate representations (namely
$z(u,\tau)$) in the intermediate region than we use for the same
region in \cite{AIK11} (effectively $u(\sigma, t)$), we now briefly
review some of the analysis of \cite{AIK11}.

\subsection{Approximate solutions in the parabolic region}
\label{ApproximateParabolic}
Roughly speaking, the parabolic region is that portion of the
manifold, away from the tip, where the geometry approaches a shrinking
cylinder, and where the diameter of the neck has at least one local
minimum. (We give a precise definition of this region in
equation~\eqref{SpecifyParabolic} in Section~\ref{Outline} below.)
As discussed in \cite{AIK11}, it is useful in this region to work with
coordinates consistent with a parabolic cylindrical
blowup:\footnote{In Section~\ref{ChooseCoordinatesWisely}, $T_0>0$
  denotes the maximal existence time of a solution with initial data
  $g_0$. Here, $T>0$ denotes the singularity time of the formal
  solution we construct below, following \cite{AIK11}. In what follows,
  $T$ is fixed, albeit arbitrary and unspecified.}
\begin{equation}\label{DefineParabolicBlowup}
  u := \frac{\psi} {\sqrt{2(n-1)(T-t)}},\qquad \sigma
  :=\frac{s}{\sqrt{T-t}},\qquad \tau := -\log(T-t).
\end{equation}
In terms of these coordinates, the Ricci flow evolution equation
becomes\footnote{See equation~(3.4) of \cite{AIK11}.}
\begin{equation}
  \dfdtau u\sigma
  = u_{\sigma\sigma} -\left(\frac\sigma2 + nI[u]\right) u_\sigma
  + \frac 12 \Bigl(u-\frac 1u\Bigr) + (n-1)\frac{u_{\sigma}^{2}}{u},
  \label{u-evolution}
\end{equation}
with
\begin{equation}
  I[u](\sigma,\tau) = \int_0^{\sigma}\
  \frac{u_{\hat{\sigma}\hat{\sigma}}(\hat{\sigma},\tau)}{u(\hat{\sigma},\tau)}
  \,\rd \hat{\sigma}.
  \label{define-I}
\end{equation}

Setting $u=1+v$ and linearizing at $u=1$, we are led to
\[
\dfdtau v\sigma = v_{\sigma\sigma} - \frac\sigma2 v_\sigma + v
+\{\text{ nonlinear terms }\}.
\]
This form suggests writing the solution to this equation by expanding
$v$ in Hermite polynomials $h_m$, which are eigenfunctions of the
linear operator
\begin{equation}
  \cA :=\frac{\pl^{2}}{\pl\sigma^{2}}-\frac{\sigma}{2}\pd \sigma +1
  \label{define-A}
\end{equation}
satisfying $(\cA +\mu_m)h_m=0$, where
\begin{equation}
  \mu_m := \frac m2 -1.
  \label{eq:mu-m-defined}
\end{equation}
Writing $v$ in this fashion leads to the approximation
\begin{equation}
  u \eh 1 + \sum_{m=0}^\infty b_m e^{-\mu_m\tau} h_m(\sigma).
  \label{eq:inner-expansion}
\end{equation}
This expansion can at best be an approximation to the actual solution,
if only because the variable $\sigma$ is bounded. Note that one
characterization of the parabolic region is that it is the
(time-dependent) range of $\sigma$ for which the series terms in
equation~\eqref{eq:inner-expansion} are sufficiently small.

We are interested in solutions for which the term with $m=k$ is
dominant for some specified $k\geq 3$. (Compare \emph{Ansatz Condition
  2} of \cite{AIK11}.) Thus the approximation above takes the form
\[
u = 1 + b_k e^{-\mu_k\tau} h_k(\sigma) + \cdots,
\]
where the sign $b_k<0$ reflects the fact that the singularity forms at
the north pole.

We normalize so that the leading term in $h_k(\sigma)$ is $\sigma^k$.
We show below that, as they evolve, solutions of interest become
$C^1$-close to the formal solution
\begin{equation}
  u = 1 + b_k e^{-\mu_k\tau} \sigma^k + \cdots
  \label{eq:u-expansion-near-neck}
\end{equation}
in that portion of the parabolic region with $|\sigma|\gg1$.  Assuming
this for now, we compute an approximation of the formal solution in
terms of the quantity $z$ introduced above. To approximate $z$, we
differentiate the expression for $u$ with respect to $\sigma$, leading
to an expression for $z$ in terms of $\sigma$. Using the relation
between $u$ and $\sigma$, we then write $z$ in terms of $u$.  The
details of the calculation are as follows.

Since $\psi = u\sqrt{2(n-1)(T-t)}$ and since $s = \sigma\sqrt{T-t}$,
we have
\[
z = \psi_s^2 = 2(n-1) u_\sigma^2 \eh 2(n-1) k^2b_k^2
e^{-(k-2)\tau}\sigma^{2k-2}
\]
so long as $|\sigma|\gg1$. On the other hand, we also have
\[
\sigma \eh \Bigl\{ \frac{1-u} {-b_k} \Bigr\}^{1/k} e^{\gk\tau},
\]
where
\begin{equation}
\gk := \frac{\mu_k}{k} = \frac12 - \frac 1k.
\end{equation}
Thus for $|\sigma|\gg1$, we get
\begin{align*}
  z
  &\eh  2(n-1) k^2b_k^2 e^{-(k-2)\tau}\sigma^{2k-2} \\
  &\eh 2(n-1) k^2b_k^2 e^{-(k-2)\tau} \left(\Bigl\{\frac{1-u}
    {-b_k}\Bigr\}^{1/k} e^{\gk\tau}
  \right)^{2k-2}\\
  &= 2(n-1)k^2 (-b_k)^{2/k} e^{-2\gk\tau} (1-u)^{2-2/k}.
\end{align*}
Hence at the interface between the parabolic and intermediate regions,
where $u$ is slightly smaller than $1$, one has
\begin{equation}
  z \eh c_k e^{-2\gk\tau} (1-u)^{2-2/k},
  \label{eq:z-expansion-near-neck}
\end{equation}
with $c_k$ defined by
\begin{equation}
  c_k:=2(n-1)k^2 (-b_k)^{2/k}.
  \label{eq:c1-bk-relation}
\end{equation}

\subsection{Approximate solutions in the intermediate region}
\label{ApproximateIntermediate}
The intermediate region is a time-dependent subset of the neighborhood
of the north pole where $-1<u_\sigma<0$ and $0<u<1$. (We provide a
precise definition in \eqref{eq:intermediate-barrier-valid} below.) By
equation~\eqref{eq:z-evolution}, we know that
\[
\dfdt z \psi = \cE _\psi[z].
\]
Since $\psi = \sqrt{2(n-1)(T-t)}\,u = \sqrt{2(n-1)}e^{-\tau/2}u$, we
obtain
\begin{equation}
  \dfdtau z u = \frac{1}{2(n-1)}\cE _u[z] - \frac12 uz_u.
  \label{eq:z-u-evolution}
\end{equation}

Noting the expansion~\eqref{eq:z-expansion-near-neck} for $z$ near
$u=1$, our first impulse is to look for approximate solutions of the
form $z\eh e^{-2\gk\tau}Z_1(u)$.  Wishing to refine this
approximation, with the goal of constructing lower and upper barriers
$z_-\leq z\leq z_+$ for the intermediate region, we are led by past
experience to construct a formal Taylor expansion in time, namely
\begin{equation}
  z = e^{-2\gk\tau}Z_1(u) + e^{-4\gk\tau}Z_2(u) + \cdots
  = \sum_{m\geq 1} e^{-2m\gk\tau} Z_m(u).
  \label{eq:z-expansion}
\end{equation}
We substitute this expansion into \eqref{eq:z-u-evolution} and split
$\cE _u[z]$ into linear and quadratic parts, as in
\eqref{eq:linear-and-quadratic-parts}.  By comparing the coefficients
of $e^{-2m\gk\tau}$ in the resulting equation, we find that $Z_m$ must
satisfy
\[
-2m\gk Z_m = \frac{1} {2(n-1)} \left\{ \cL _u[Z_m] +
  \sum_{i=1}^{m-1}\hcQ_u {[Z_i, Z_{m-i}]} \right\} - \frac12
u\frac{dZ_m} {du}.
\]
This leads to the family of \textsc{ode}
\begin{equation}
  \frac12\bigl(u^{-1}-u\bigr)\frac{dZ_m} {du} +\bigl(u^{-2} + 2m\gk\bigr)Z_m
  = -\frac{1} {2(n-1)}\sum_{i=1}^{m-1}\hcQ_u {[Z_i, Z_{m-i}]},
  \label{eq:Zm-recursion}
\end{equation}
from which the $Z_m$ can be computed recursively, up to a solution of
the associated homogeneous equation (obtained by replacing the
\textsc{rhs} of equation~\eqref{eq:Zm-recursion} with zero). The
general solution of the $m^{\rm th}$ homogeneous equation is
\begin{equation}
  Z_{m,{\rm hom}}(u) = \hat c_m u^{-2} (1-u^2)^{1+2m\gk},
  \label{eq:Zm-homogeneous}
\end{equation}
where $\hat c_m$ is arbitrary.

For $m=1$, the \textsc{rhs} of equation~\eqref{eq:Zm-recursion}
vanishes. Therefore $Z_1$ satisfies the homogeneous equation,
whereupon setting $\hat c_1=c_k$ yields
\begin{equation}
  Z_1(u) = Z_{1,{\rm hom}}(u) = c_k u^{-2}(1-u^2)^{1+2\gk}.
  \label{eq:Z1}
\end{equation}
The function $Z_2$ is harder to compute (we have not tried to find a
solution in closed form).  However, the expression ~\eqref{eq:Z1} for
$Z_1$ shows that $Z_1 \eh c_k 2^{1+2\gk}(1-u)^{1+2\gk}$, ${Z_1}' \eh
-c_k(1+2\gk)2^{1+2\gk}(1-u)^{2\gk}$, and ${Z_1}'' \eh c_k
(\gk+2\gk^2)2^{2+2\gk}(1-u)^{2\gk-1}$ as $u\nearrow1$.  Thus we
calculate
\[
\hcQ_u {[Z_1, Z_1]} = \cQ _u [Z_1] \eh-c_k^2
2^{1+4\gk}(1-4\gk^2)(1-u)^{4\gk}.
\]
It follows that for some constant $\hat c$, the solution $Z_2$ of
equation~\eqref{eq:Zm-recursion} takes the form
\begin{equation}
  Z_2(u) = \bigl(\hat c+o(1)\bigr) (1-u)^{4\gk} \quad\text{as}\quad u\nearrow 1.
  \label{eq:Z2-asymptotics}
\end{equation}
The solution $Z_{2,{\rm hom}}$ of the homogeneous equation is
dominated by $(1-u)^{1+4\gk}$, hence is smaller than the \textsc{rhs}
of \eqref{eq:Z2-asymptotics} for $u\nearrow1$. Therefore, all
solutions $Z_2$ of equation~\eqref{eq:Zm-homogeneous} satisfy
\eqref{eq:Z2-asymptotics}.

It follows from these calculations that, near the neck, where $u\eh
1$, the first two terms in our formal expansion of $z$ predict that
\begin{equation}
  z = c_k e^{-2\gk\tau}u^{-2}(1-u^2)^{1+2\gk}
  + \cO \bigl(e^{-4\gk\tau}(1-u)^{4\gk}\bigr).
  \label{eq:1}
\end{equation}
Since $\gk<\frac12$, we have $1+2\gk > 4\gk$. Thus for fixed $\tau$,
the remainder term becomes larger than the leading term as $u\nearrow
1$. In \cite{AIK11}, we characterize the intermediate region using the
coordinate system~\eqref{MetricOut}; here, we identify it in terms of
the system~\eqref{MetricIn} as the region where our formal solution
$e^{-2\gk\tau}Z_1$ is a good approximation to the true solution. The
formal solution is a good approximation only so long as the remainder
is smaller than the leading term. This is true where
$e^{-4\gk\tau}(1-u)^{4\gk}\ll e^{-2\gk\tau}u^{-2}(1-u^2)^{1+2\gk}$, or
equivalently where $e^{-2\gk\tau} \ll (1-u)^{1-2\gk} = (1-u)^{2/k}$.
So our approximation in the intermediate region is valid where $1-u
\gg e^{-k\gk\tau}$.  Expansion~\eqref{eq:u-expansion-near-neck} in the
parabolic region shows that $1-u \gg e^{-k\gk\tau}=e^{-(k/2-1)\tau}$
precisely when $\sigma\gg 1$.

\medskip
Going in the other direction, as $u\searrow0$, one has the expansion
\begin{multline}
  \cQ _u [Z_1] \eh -2c_k^2(n-4)u^{-6} + 4c_k^2(n-3)(1+2\gk)u^{-4}\\
  -2c_k^2(1+2\gk)[n(1+4\gk)-10\gk-1]u^{-2}.
\end{multline}
Therefore, in general dimensions, the solution $Z_2$ of
equation~\eqref{eq:Zm-recursion} satisfies
\begin{equation}
  Z_2(u)=\cO  (u^{-4})\quad\text{as}\quad u\searrow 0.
\end{equation}
Near the tip, where $u\eh0$, the first two terms in our formal
expansion of $z$ predict that
\begin{equation}
  z=c_k e^{-2\gk\tau}u^{-2} + \cO  \bigl(e^{-4\gk\tau}u^{-4}\bigr).
  \label{eq:z-intemediate-near-tip}
\end{equation}
This shows that our approximation in the intermediate region is
expected to be valid in general only for $u\gg e^{-\gk\tau}$, or
equivalently for $\psi\gg e^{-(1-1/k)\tau}$, which is in agreement
with equation~(6.11) of \cite{AIK11}.

\subsection{Approximate solutions at the tip}
\label{approximate-sol-at-tip}

Since the solutions in the intermediate region are predicted to be
valid only for $u\gg e^{-\gk\tau}$, we introduce the variable
\[
r = e^{\gk\tau} u
\]
to be used in the tip region, which is defined in
equation~\eqref{eq:tip-barrier-valid} below.  Then, in view of
\eqref{eq:z-u-evolution}, one has
\[
\dfdtau zr = \dfdtau zu + \gk rz_r
\]
and hence
\[
\dfdtau zr = \frac{1}{2(n-1)} \cE _u[z] - \frac{r}{k}z_r
=\frac{e^{2\gk\tau}}{2(n-1)} \cE _r[z] - \frac{r}{k}z_r.
\]
We rewrite this as $\cT _r[z] = 0$, where
\begin{equation}
  \cT _r[z] :=
  e^{-2\gk\tau}\left\{ \dfdtau zr + \frac{r}{k}z_r\right\}
  - \frac{1}{2(n-1)} \cE _r[z].
  \label{eq:dz-dtau-r-constant}
\end{equation}
For large $\tau$, we assume for the formal argument here that we can
ignore the terms containing $e^{-2\gk\tau}$, which reduces the
equation to
\[
\cE _r[z]=0.
\]
The solutions of this equation (see \cite{ACK09,AIK11}) are the Bryant solitons,
\[
z(r) = \bry(a_k r),
\]
with $a_k>0$ an arbitrary constant (to be fixed below by matching considerations).
To obtain more accurate approximate solutions of $\cT _r[z]=0$, one could try an
expansion of the form
\begin{equation}
  z = \bry(a_k r) + e^{-2\gk\tau} \beta_1(r) + e^{-4\gk\tau}\beta_2(r) + \cdots.
  \label{eq:tip-expansion}
\end{equation}
In Section~\ref{sec:tip-region-sub-super}, we construct sub- and
supersolutions based on the first two terms of this expansion.  If as
in \cite{ACK09} and \cite{AIK11} we normalize the Bryant soliton by
requiring
\begin{equation}
  \bry(r) = \frac{1+o(1)}{r^2}, \qquad (r\to\infty),
  \label{eq:bryant-normalization}
\end{equation}
then the \emph{Ansatz}~\eqref{eq:tip-expansion} leads to
\[
z \sim a_k^{-2} r^{-2} \qquad (r\to\infty),
\]
and thus in terms of the $u$ coordinate, to
\[
z \sim a_k^{-2} e^{-2\gk\tau} u^{-2}
\qquad
(\tau\to\infty,\;u\text{ small}).
\]
Matching this expression with that given in
\eqref{eq:z-intemediate-near-tip}, we obtain $c_k=a_k^{-2}$, namely
\begin{equation}
  a_k := \frac{1}{\sqrt{c_k}}.
  \label{eq:A-c1-relation}
\end{equation}

\subsection{General features of the formal solutions}
\label{General}
In addition to recalling (with certain adaptations) the explicit
coordinate representations of the formal solutions in the parabolic,
intermediate, and tip regions, we wish to note a few general features
of the formal solutions that play a role in the statement and proof
of our main theorem. We first note that the
matched asymptotics analysis of \cite{AIK11} holds for all space
dimensions $n+1\ge 3$, for all Hermite indices $k\ge 3$, and for all
negative real values of the constant $b_k$. We thus observe that the
set of formal solutions is parameterized by these three numbers $\{n,
k, b_k\}.$

Next, it follows from the form of the formal solutions in the
parabolic region (and from the properties of the Hermite polynomials)
that the formal solutions are reflection-symmetric across the ($s=0$)
equator if and only if $k$ is even. These are the solutions that have
degenerate neckpinches forming simultaneously at both poles. Also,
asymptotically, they have only tip, intermediate, and parabolic
regions. Hence, in studying (below) the convergence of full Ricci flow
solutions to these formal approximations, it is only for the solutions
with odd $k$ that we must work in an outer region (beyond the
parabolic region) as well as in the other regions.

Finally, we note that if we calculate the curvature for one of these
formal solutions (see Section~6 of \cite{AIK11}), we find that the
norm of the curvature tensor achieves its maximum value at the tip,
where it takes the value
\begin{equation}
  |\Rm(\mathrm{tip},t)| =\frac{C_k}{(T-t)^{2-2/k}},
\end{equation}
with $C_k$ a constant\footnote{At the tip, the sectional curvatures
$\mathcal{K}^\top$ of a $2$-plane tangent to an $\cS^n$ factor and
$\mathcal{K}^\bot$ of an orthogonal $2$-plane both take the value
$\mathcal{K}=\frac{\hat C_k}{(T-t)^{2-2/k}}$, with
$\hat C_k := \frac{4}{k^2 (n-1)^2}(2 |b_k|)^{-\frac{2}{k}}.$}
depending only on $n, k,$ and $b_k$. The curvature of a
Ricci flow solution that asymptotically approaches this formal
solution necessarily blows up at the same rate.

\section{Outline of the proof}
\label{Structure}
The main result of this paper, as stated above in the main theorem,
is that for each formal degenerate neckpinch solution described above
(with specified values of the parameters $\{n, k, b_k\}$), there exist
rotationally symmetric Ricci flow solutions that asymptotically approach
this solution, and therefore share its asymptotic properties.  This result
verifies the conjecture in Section~7 of \cite{AIK11}. The rest of this paper
is devoted to proving this result. In this section, we outline the overall
strategy we use to carry out the proof.  In the remaining sections, we provide
the details.

\medskip

Our strategy for proving that there exist solutions obeying the
asymptotic profiles derived in \cite{AIK11} follows Wa\.{z}ewski's
retraction principle \cite{Wazewski47}, as used in \cite{AV97}. We
apply this to solutions of Ricci flow, regarded as trajectories
\[
g:[0,T_0) \to C^\infty \left(\big( T^*\cS^{n+1} \otimes_{\rm sym}
  T^*\cS ^{n+1} \big)_+\right)
\]
satisfying the isometry condition $\Upsilon^* g(t)=g(t)$ for all
$\Upsilon\in {\rm SO}(n+1)$ and $t\in[0,T_0)$. As noted in
Section~\ref{ChooseCoordinatesWisely}, such solutions may be naturally
identified with ordered pairs of maps
\[
(\vp,\psi):[0,T_0)\rightarrow\big(\R_+ \times \R_+ \big)
\]
satisfying the boundary conditions~\eqref{BoundaryConditions} for all
$t\in[0,T_0)$, where $T_0=T_0(g_0)$ is the maximal existence time of
the unique solution with initial data $g_0$.

To implement this strategy, we define tubular neighborhoods $\Xi_\ve$ of the
formal (approximate) solutions discussed above and in \cite{AIK11}. For a given
fixed formal solution $\hat g_{\{ n,k,b_k\}}(t)$ with singularity at time
$T$, and for a given positive $\ve \ll 1$, we choose a tubular neighborhood of
$\hat g_{\{n,k,b_k\}}(t)$ by specifying time-dependent inequalities to be
satisfied by $\vp$ and $\psi$, with these inequalities becoming more restrictive
as $t\nearrow T$, narrowing to the formal solution at time $t=T$.  Decomposing
the boundary of the tubular neighborhood
$\pl\Xi_\ve=(\pl\Xi_\ve)_-\cup(\pl\Xi_\ve)_\circ\cup(\pl\Xi_\ve)_+$, we
construct barriers and prove \emph{entrapment lemmas} which show that solutions
$g\in\Xi_\ve$ never contact $(\pl\Xi_\ve)_+$. We also prove an \emph{exit lemma}
which shows that any solution $g\in\Xi_\ve$ that contacts $(\pl\Xi_\ve)_-$ must
immediately exit.  This ensures that the exit times of those solutions that
never reach the neutral part $(\pl\Xi_\ve)_\circ$ of the boundary depend
continuously on the initial data.  In Section~\ref{InitialData}, we construct
(for each specified formal solution) a $k$-dimensional family of initial data
$g_\alpha(t_0) \in (\Xi_\ve\cap\{t=t_0\})$ parameterized by $\alpha\in {\cB}^k$
(the closure of a $k$-dimensional topological ball) such that all
$g_\alpha(t_0)$ with $\alpha\in\pl\cB ^k$ lie in the exit set $(\pl\Xi_\ve)_-$,
and such that the map $\phi:\alpha\in\pl\cB ^k\to g_\alpha(t_0) \in
(\pl\Xi_\ve)_-$ is not contractible. (As seen in Section~\ref{InitialData},
$t_0\in[0,T)$ is chosen to be very close to the singularity time $T$ of the
formal solution.) We choose the initial data $g_\alpha(t_0)$ at a sufficiently
large distance from the neutral boundary $(\pl\Xi_\ve)_\circ$ to guarantee that
as long as the corresponding solution $\{g_\alpha(t):t\geq t_0\}$ stays within
$\Xi_\ve$, it never reaches $(\pl\Xi_\ve)_\circ$.  In particular, the
$g_\alpha(t_0)$ are chosen so that the solutions $g_\alpha(t)$ can only leave
$\Xi_\ve$ through $(\pl\Xi_\ve)_-$.  If for every $\alpha\in\cB ^k$ the solution
$g_\alpha(t)$ were to leave $\Xi_\ve$ at some time $t_\alpha>t_0$, then the exit
map $\alpha\mapsto g_{\alpha}(t_\alpha)$ would yield a continuous map $\Phi: \cB
^k \to (\pl\Xi_\ve)_-$.  Since $t_\alpha=t_0$ for all $\alpha\in\pl\cB ^k$, the
map $\Phi$ would extend $\phi$, and therefore $\phi$ would have to be
contractible, in contradiction with our construction of $\phi$.  It follows that
for at least one $\alpha\in \cB^k$, the solution $g_\alpha(t)$ never leaves the
tubular neighborhood $\Xi_\ve$.  Such a solution must then exhibit the
asymptotic behavior derived in \cite{AIK11} (and summarized above in
Section~\ref{sec:form-asympt-again}) for the formal model solution $\hat
g_{\{n,k,b_k\}}(t)$.

\begin{figure}[t]
  \centering
  \includegraphics[width=\textwidth]{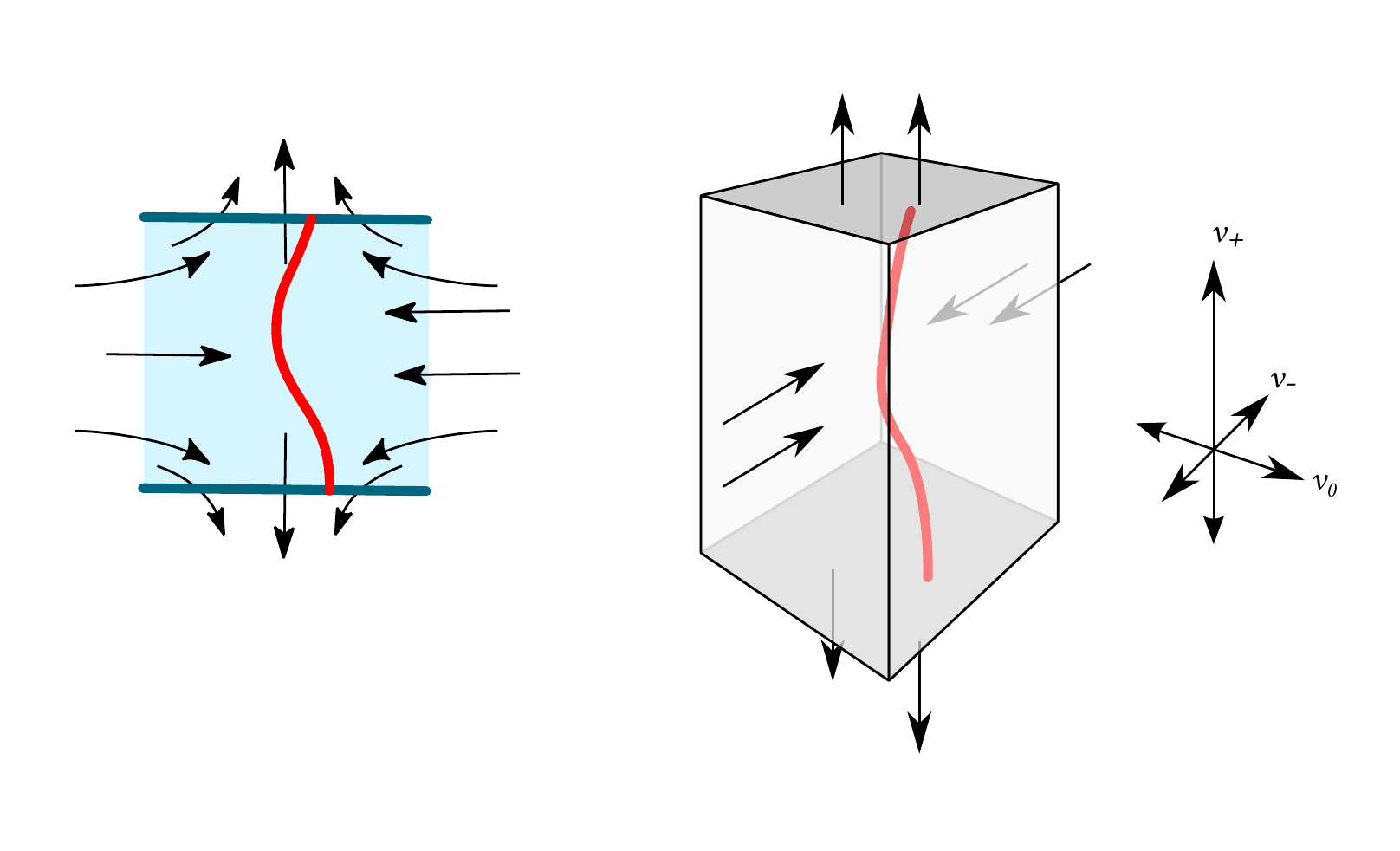}
  \caption{A schematic view of a tubular neighborhood $\Xi_\ve$.}
\end{figure}

We outline further the steps of this strategy in Section~\ref{Outline}
below.  Before doing so, we briefly describe how the tubular neighborhood
$\Xi_{\ve}$ (hereafter called ``the tube'') is defined in each of the four
regions. In subsequent sections, we show that the tube, as defined here and
(more explicitly) in Section~\ref{PuttingUpBarriers}, has all the properties
needed to carry through the proof.

\begin{figure}[ht]
  \centering
  \includegraphics[width=0.8\textwidth]{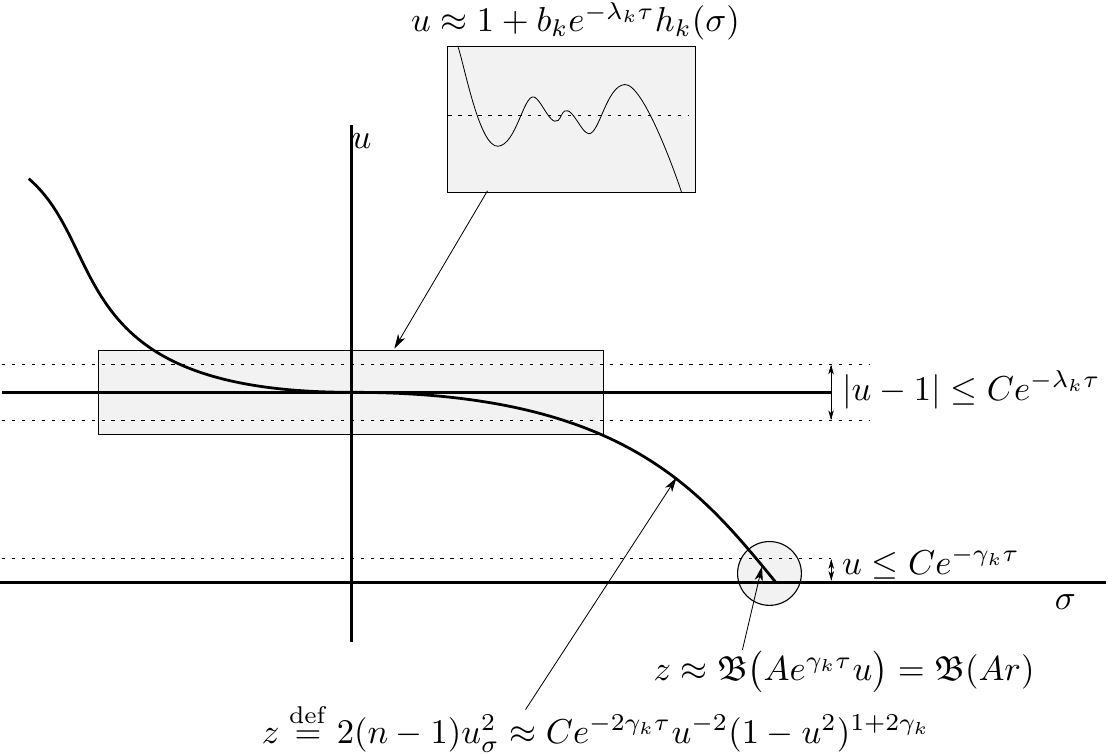}
  \caption{An overview of the separate regions in the solution }
  \label{fig:overview}
\end{figure}

\subsection{Defining the tube in the tip and intermediate regions}
In both of these regions, we define the tubular neighborhood
$\Xi_{\ve}$ using upper and lower barrier functions, $z_+(u,\tau)$ and
$z_-(u,\tau)$, which are defined with respect to the coordinate system
described in Section~\ref{IntermediateCoordinates}. By the parabolic
maximum principle, an evolving solution cannot cross a barrier. Thus
the barriers provide an \emph{entrapment condition}, ensuring that
solutions never exit through the portions of $(\pl\Xi_\ve)_+$
they define. The barriers are constructed for a precise subset of
$u\in[0,1)$ using functions adapted to each region separately.  To
complete the construction, we ``patch'' these functions together in
the transitional areas where the regions overlap, using the fact that
the maximum (minimum) of two subsolutions (supersolutions) is a
subsolution (supersolution).  This work is done in Section~\ref{PuttingUpBarriers}.

\subsection{Defining the tube in the parabolic region}
If $\Xi_\ve$ could be defined by barriers everywhere, it would follow
that solutions starting from an open set of rotationally symmetric
initial data would all develop degenerate neckpinches at the same time
$t=T$.  This is of course impossible, since one can always change the
singularity time of a solution by shifting it in time.  So rather than
constructing barriers in the parabolic region, we must adopt an
alternate (necessarily more complicated) approach.

We define $\Xi_\ve$ in the parabolic region working with the
coordinate system discussed in Section~\ref{ParabolicCoordinates}. In
order to do so, we must introduce more notation. As discussed in
Section~\ref{ApproximateParabolic} and in \cite{AIK11}, the operator
$\cA $ defined in~\eqref{define-A} naturally arises in the analysis of
the parabolic region, and we are led to write the formal solution as
an expansion in its Hermite polynomial eigenfunctions.  To make this
more precise, we note that $\cA $ is self-adjoint in the weighted
Hilbert space $\hs := L^2(\R; \Dh)$. Denoting the inner product in
$\hs$ by $\lh f,g \rh :=\int_{\R}f(\sigma)g(\sigma)\Dh$, and the norm
by $\|f\nh:= \surd{\lh f,f \rh}$, we find that the Hermite
polynomials $\{h_m(\cdot)\}_{m=0}^{\infty}$ constitute a complete
orthogonal basis of $\hs$.  We normalize so that
$h_m(\sigma)=\sigma^m+\cdots$.

Solutions $v=u-1$ do not belong to $\hs$, because they are not defined
for all $\sigma$.  So we fix $\eta>0$ to be a smooth even cutoff
function such that $\eta(y)=1$ for $0\leq |y|\leq1$ and $\eta(y)=0$
for $|y|\geq\frac65$.  We then define
\begin{equation}
\label{Support}
\vh(\sigma,\tau):=
\begin{cases}
  \eta\big(e^{-\gk\tau/4}\sigma\big)\,v(\sigma,\tau)
  &\text{for}\,|\sigma|\leq\frac65 e^{\gk\tau/5},\\
  0 &\text{for}\,|\sigma|>\frac65 e^{\gk\tau/5}.
\end{cases}
\end{equation}
It follows that $\vh\in\hs$ is smooth and satisfies
$\vh(\sigma,\tau)=v(\sigma,\tau)$ for $|\sigma|\leq e^{\gk\tau/5}$.
One computes from equation~\eqref{u-evolution} that
\begin{equation}
  \dfdtau\vh\sigma=\cA \vh + \eta N[v] + E[\eta,v],
  \label{CutoffEvolution}
\end{equation}
where
\begin{align*}
  N[v](\sigma,\tau)&:=\frac{2(n-1)v_{\sigma}^2-v^2}{2(1+v)}-nI[v]v_\sigma,\\
  E[\eta,v](\sigma,\tau)&:=\left(\eta_\tau-\eta_{\sigma\sigma}+\frac{\sigma}{2}\eta_\sigma\right)v
  -2\eta_\sigma v_\sigma,
\end{align*}%
with
\begin{gather*}
  \eta_{\sigma} = \frac{\pl \eta(e^{-\gk\tau/4}\sigma\big)}{\pl\sigma}
  =e^{-\gk\tau/4}\,\eta'\bigl(e^{-\gk\tau/4}\sigma\big),\qquad
  \eta_{\sigma\sigma}
  = e^{-\gk\tau/2}\,\eta''\bigl(e^{-\gk\tau/4}\sigma\big),\\
  \eta_\tau = -\frac{\gk}{4}\sigma e^{-\gk\tau/4}\,
  \eta'\bigl(e^{-\gk\tau/4}\sigma\big),
\end{gather*}
and where $I[v]=I[u]$ is defined in~\eqref{define-I}. We note that in calculating
$\eta N[v]$ and $E[\eta,v]$, one may take $v$ to be identically zero for any
$|\sigma|>\frac65 e^{\gk\tau/5}$ at which $v$ is not defined.

Because $\vh\in\hs$, it makes sense to define the projections
\begin{subequations}
  \label{DefineProjections}
  \begin{align}
    v_{k-} &:=\sum_{j=0}^{k-1}
    \frac{\lh \vh,h_j\rh}{\| h_j\nh^2}\,h_j,\\
    v_{k}   &:=\frac{\lh \vh,h_k\rh}{\| h_k\nh^2}\,h_k,\\
    v_{k+} &:=\sum_{j=k+1}^{\infty} \frac{\lh \vh,h_j\rh}{\|
      h_j\nh^2}\,h_j \qquad\left(=\vh-\big(v_{k-}+v_{k}\big)\right).
  \end{align}
\end{subequations}
With regard to the linear operator $\cA $, the projections $v_{k-}$,
$v_{k}$, and $v_{k+}$ represent rapidly-growing, neutral, and
rapidly-decaying perturbations, respectively, of the formal solution.
We show below that $v_{k-}$ is associated to the boundary
$(\pl\Xi_\ve)_-$, $v_k$ to the boundary $(\pl\Xi_\ve)_\circ$, and
$v_{k+}$ to the boundary $(\pl\Xi_\ve)_+$.

In the parabolic region, $\Xi_\ve$ is defined by four conditions: a
solution $g(\cdot)$ belongs to $\Xi_\ve$ if it satisfies the three
$L^2$ inequalities
\begin{subequations}
  \begin{align}
    \| v_{k-} \nh  &\leq\ve e^{-\mu_k\tau},\label{vkminusL2} \\
    \| v_{k}-b_ke^{-\mu_k\tau}h_k\nh &\leq\ve e^{-\mu_k\tau},\label{vkL2}\\
    \| v_{k+}\nh &\leq\ve e^{-\mu_k\tau},\label{vkplusL2}
    \intertext{together with the pointwise inequality}
    \sup_{|\sigma|\leq P}\big(|v|+|\partial_\sigma v|\big)&\leq W
    e^{-\mu_k\tau},\label{DefineParabolicPW}
  \end{align}
\end{subequations}
where $b_k$ is the parameter that selects a particular formal solution,
$P$ is a constant chosen below to define the range of the parabolic region,
$W$ is a large constant to be fixed below, and $\mu_k$ is defined
in \eqref{eq:mu-m-defined}.  We show below that the first inequality above
is an \emph{exit condition}, meaning that any solution that contacts
$\pl\Xi_\ve$ with $\| v_{k-} \nh = \ve e^{-\mu_k\tau}$ immediately exits
$\Xi_\ve$.  We further show that the remaining three inequalities are
\emph{entrapment conditions}, ensuring that solutions of interest never
contact the parts of $\pl\Xi_\ve$ that they define.  This work is done in
Section~\ref{ExitStrategy}.

\subsection{Defining the tube in the outer region}
\label{OuterTube}
If $k\geq4$ is even, we do not have to deal with an outer region when defining
$\Xi_\ve$: in this case, the formal solution described in Section~\ref{sec:form-asympt-again}
is reflection symmetric. So we encase it (outside the central parabolic region) in barriers
that are themselves reflection symmetric, placing such properly ordered and patched barriers
on either side of the parabolic region.\footnote{If $k\geq4$ is even, it is not necessary
that the initial data we construct in Section~\ref{InitialData} be reflection symmetric. For
such data to belong to $\Xi_\ve$, it suffices that they lie inside the reflection symmetric
barriers defined in Section~\ref{PuttingUpBarriers}, and that they satisfy estimates
\eqref{vkminusL2}--\eqref{DefineParabolicPW}.}

If $k\geq3$ is odd, on the other hand, we define $\Xi_\ve$ outside the parabolic region
with respect to the coordinate system described in Section~\ref{IntermediateCoordinates},
using barriers adapted to a precise subset of $u\in(1,e^{\tau/2})$. This work, which is
highly analogous to the constructions for the tip and intermediate regions, is also done
in Section~\ref{PuttingUpBarriers}.

\subsection{Summary of the main estimates used in the proof}
\label{Outline}
We presume now that we have fixed $n\geq2$ and $k\geq3$, and chosen $b_k<0$, thereby
selecting a formal solution $u(\sigma,\tau)=1+b_k e^{-\mu_k\tau}h_k(\sigma)+\cdots$
(corresponding to $\hat g_{\{n,k,b_k\}}(\tau)$) to serve as a model in the
parabolic region for the solutions whose existence we prove in this
paper. Two parameters remain to be determined: the small constant $\ve$
appearing in \eqref{vkminusL2}--\eqref{vkplusL2}, and the large constant
$W$ appearing in \eqref{DefineParabolicPW}. Carefully tracking which estimates
depend on which choices, we show below that at the end of the proof, we can
choose $W$ sufficiently large, depending only on $b_k$, such that our exit and
entrapment results, Lemmas~\ref{ExitLemma}--\ref{PWentrapment}, and hence the
Main Theorem, hold for all sufficiently small $\ve$, depending on $\params$.
Toward this end, we proceed as follows.


Using the relation $z=\psi_s^2 = 2(n-1)u_\sigma^2$, we employ properly
ordered and patched barriers constructed in Section~\ref{PuttingUpBarriers}
to argue in Section~\ref{ExitStrategy} that there exist exist constants
$A^\pm$, $B$, $\delta_1$, and $\tau_8$,\footnote{At several steps of the proofs
in Sections~\ref{PuttingUpBarriers}--\ref{ExitStrategy}, one has to restrict to
sufficiently large times $\tau$; we address this in Section~\ref{InitialData} by
constructing initial data that become singular sufficiently quickly.}
depending only on $b_k$, such that solutions $v=u-1$ in the tube $\Xi_\ve$ satisfy
\[
    \frac{\sqrt{A^-}}{k} <
    \big|\pl_\sigma\big\{(e^{\mu_k\tau}|v|)^{\frac1k}\big\}\big|
    <\frac{\sqrt{A^+}}{k}
\]
for $\tau\geq\tau_8$ and
$|\sigma|\in\left(\frac35 P,\delta_1 e^{\gk\tau}\right)$, where
\begin{equation}
  P:=2\left(\frac{B}{-b_k}\right)^{\frac1k}.
  \label{DefineP}
\end{equation}
The inequality above provides a precise sense in which solutions in
$\Xi_\ve$ resemble the formal solution for large $|\sigma|$, i.e., in
the interface between the parabolic and intermediate regions.
Accordingly, we define the parabolic region as
\begin{equation}
    \cP :=\{\sigma:|\sigma|\leq P\}.
    \label{SpecifyParabolic}
\end{equation}
For later use, we note that $P$ and the other constants introduced in
Section~\ref{PuttingUpBarriers} depend only on $b_k$ and are in particular
independent of $W$.

Further analyzing the parabolic--intermediate interface, we prove
in Section~\ref{ExitStrategy} that there exist $C_\pm$ depending on
$\params$ such that solutions in $\Xi_\ve$ satisfy
\[
    C_-|\sigma|^k \leq e^{\mu_k\tau}|v| \leq C_+|\sigma|^k
\]
for $|\sigma|\in\left(\frac35 P,\delta_1 e^{\gk\tau}\right)$
and $\tau\geq\tau_8$. Combining
this with the pointwise bound~\eqref{DefineParabolicPW}, we then
show that for solutions in $\Xi_\ve$, one has a pointwise estimate
\[
\big|(\pl_\tau-\cA )v\big|\leq C_1 W^2 e^{-2\mu_k\tau}
\]
in $\cP $ for $\tau\geq\tau_9$, where $C_1$ depends only on $b_k$,
and $\tau_9\geq\tau_8$ depends on $\params$. We next argue that
there exists $C_2$ depending only on $b_k$ such that one has an $L^2$ bound
\[
\|(\pl_\tau-\cA )\vh\nh \leq C_2 e^{-\frac32\mu_k\tau}
\]
for all $\tau\geq\tau_{10}$, if $\tau_{10}\geq\tau_9$ is chosen
sufficiently large, depending on $\params$.

To finish the proof, we apply our estimates for $\|(\pl_\tau-\cA)\vh\nh$
to the three $L^2$ inequalities~\eqref{vkminusL2}--\eqref{vkplusL2}
defining $\Xi_\ve$ in the parabolic region to prove an exit lemma for
the first inequality and entrapment lemmas for the remaining two. Then
we use the analytic semigroup generated by $\cA $ to derive $C^1$ bounds
\[
\sup_{|\sigma|\leq P}\big\{|\vh(\sigma,\tau)| +
|\vh_\sigma(\sigma,\tau)|\big\} \leq C_3 e^{-\mu_k\tau}
\]
that hold at all sufficiently large times $\tau\geq\tau_{11}\geq\tau_{10}$, where
$\tau_{11}$ depends on $\params$, but $C_3$ depends only on $b_k$. We use this estimate
to prove an entrapment lemma that says that for $W$ large enough and all small enough $\ve$,
solutions in $\Xi_\ve$ never contact $\pl\Xi_\ve$ by achieving equality in~\eqref{DefineParabolicPW}.
Collectively, these results hold for all times $\tau\geq\bt\geq\tau_{11}$, thereby establishing
all necessary properties of the tubular neighborhood $\Xi_\ve$. The details of the steps outlined
here for the parabolic region are provided in Section~\ref{ExitStrategy}.

\medskip

The argument we have outlined above proves that for at least
one initial datum belonging to $\Xi_\ve$ at the initial time,
the solution that flows from this datum develops a degenerate
neckpinch --- provided that the set of such initial data is
nonempty. The work to construct such data is done in
Section~\ref{InitialData}, where we choose $t_0\in[0,T)$ close enough
to $T$ so that $t=t_0$ corresponds to $\tau=\bt$.  This completes the
outline of the proof of the main theorem, which validates the predictions
we made in \cite{AIK11}.\footnote{One may translate between the
  coordinate representations used in \cite{AIK11} and those used here
  by following the methods illustrated in
  Section~\ref{sec:form-asympt-again} above.}
We proceed in the sections below to supply the details of the proof.

\section{Constructing barriers}\label{PuttingUpBarriers}

\subsection{Barriers in the intermediate region}

In the intermediate region, one says that $z$ is a subsolution
(supersolution) of equation~\eqref{eq:z-u-evolution} if $\cD _u z
\leq0$~$(\geq0)$, where
\begin{equation}
  \cD _u z := \dfdtau z u -\frac12 (u^{-1}-u)z_u -
  u^{-2}z - \frac{\cQ _u[z]}{2(n-1)}.
  \label{eq:Du-definition}
\end{equation}
A lower (upper) barrier is a subsolution (supersolution) that lies
below (above) a formal solution in an appropriate space-time region.
These are obtained by first constructing parameterized families of
sub- and supersolutions, and then making suitable parameter choices to
ensure properly ordered barriers.

\begin{lemma}
  \label{lem:intermediate-barrier}
  Let $Z_1(u) = A_1 u^{-2}(1-u^2)^{1+2\gk}$ for some $A_1>0$.  There
  exist a function $\zp : (0, 1)\to\R$, a constant $B_1>0$, and a
  constant $\Amin < \infty$ depending only on $A_1$ such that for any
  $\tA\geq\Amin$, the function
  \begin{equation}
    z_\pm(u, \tau) := e^{-2\gk\tau}Z_1(u) \pm \tA e^{-4\gk\tau}\zp(u)
    \label{eq:barrier-ansatz}
  \end{equation}
  is a supersolution $(+)$ or a subsolution $(-)$ in the space-time
  region
  \begin{equation}
    B_1\sqrt{\frac{\tA}{A_1}}e^{-\gk\tau} \leq
    u\leq
    1-B_1\left( \frac{\tA}{A_1} \right)^{k/2} e^{-\mu_k\tau},  \qquad
    \tau \geq \taumin,
  \end{equation}
  where $\taumin$ depends only on $A_1$ and $\tA$.
\end{lemma}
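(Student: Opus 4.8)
The plan is to insert the Ansatz \eqref{eq:barrier-ansatz} directly into the operator $\cD_u$ from \eqref{eq:Du-definition} and to choose $\zp$ so that the resulting error has a favorable sign once $\tA$ is large. First I would compute $\cD_u[z_\pm]$ by exploiting bilinearity of the quadratic part: writing $z_\pm = e^{-2\gk\tau}Z_1 \pm \tA e^{-4\gk\tau}\zp$, one gets
\[
\cD_u[z_\pm] = e^{-2\gk\tau}\cD_u[e^{-2\gk\tau}Z_1]
\;\pm\; \tA e^{-4\gk\tau}\,\mathcal{M}_u[\zp]
\;-\; \tA^2 e^{-8\gk\tau}\,\frac{\cQ_u[\zp]}{2(n-1)},
\]
where $\mathcal{M}_u$ is the linear operator obtained by linearizing $\cD_u$ about $e^{-2\gk\tau}Z_1$ (it contains the $2\gk$ from $\pl_\tau$, the transport term $-\tfrac12(u^{-1}-u)\pl_u$, the zeroth-order term $-u^{-2}$, and the cross term $-\frac{1}{n-1}\hcQ_u[e^{-2\gk\tau}Z_1,\cdot\,]$). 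By the construction of $Z_1$ as the exact solution of the homogeneous equation \eqref{eq:Z1}--\eqref{eq:Zm-homogeneous}, the term $\cD_u[e^{-2\gk\tau}Z_1]$ is \emph{not} zero — it equals $-e^{-4\gk\tau}\cQ_u[Z_1]/(2(n-1))$, which is precisely the forcing $Z_2$ was designed to absorb. So the natural choice is to take $\zp$ to solve the $m=2$ inhomogeneous \textsc{ode} \eqref{eq:Zm-recursion} with the particular $\hat c_2$ that makes $\zp>0$ on $(0,1)$ and with the asymptotics $\zp\sim \hat c\,(1-u)^{4\gk}$ as $u\nearrow 1$ (from \eqref{eq:Z2-asymptotics}) and $\zp=\cO(u^{-4})$ as $u\searrow 0$; with this choice the $O(e^{-4\gk\tau})$ terms cancel to leading order, leaving a remainder of size $e^{-4\gk\tau}\cdot(\text{lower order in }1-u)$ from the mismatch between $\mathcal{M}_u[\zp]$ and $-\cQ_u[Z_1]/(2(n-1))$, plus the genuinely smaller $O(e^{-6\gk\tau})$ cross terms and the $O(e^{-8\gk\tau})$ pure-quadratic term.

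Next I would estimate these remainder terms on the stated space-time region. The key structural point is the sign: the $\pm\tA e^{-4\gk\tau}\zp$ correction, when hit by the dominant piece of $\mathcal{M}_u$ (the $u^{-2}\zp$ term and the leading $\hcQ_u[Z_1,\zp]$ term, both of definite sign relative to $\zp>0$), produces a contribution of size $\tA e^{-4\gk\tau}$ times something bounded below by a positive multiple of $\zp/u^2$ or $Z_1\zp/u^2$; meanwhile the uncontrolled remainder is bounded by a constant (depending only on $A_1$) times $e^{-4\gk\tau}$ times the \emph{same} weight but without the factor $\tA$. Thus for $\tA \geq \Amin(A_1)$ large enough the signed term dominates: $\cD_u[z_+]\geq 0$ and $\cD_u[z_-]\leq 0$. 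The two endpoints of the $u$-interval are exactly where the competing powers of $(1-u)$ and of $u$ balance: near $u=1$ the correction $\tA e^{-4\gk\tau}(1-u)^{4\gk}$ must still be smaller than the leading term $e^{-2\gk\tau}(1-u)^{1+2\gk}$ (which forces $1-u \gtrsim (\tA/A_1)^{k/2}e^{-\mu_k\tau}$ since $1+2\gk-4\gk = 2/k$ and $\mu_k = k\gk$), while near $u=0$ the quadratic term $\tA^2 e^{-8\gk\tau}u^{-6}$ must not overwhelm the linear $\tA e^{-4\gk\tau}u^{-4}$ structure, forcing $u \gtrsim \sqrt{\tA/A_1}\,e^{-\gk\tau}$; these are precisely the two inequalities defining the region, with $B_1$ chosen to make the balance quantitative. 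The restriction $\tau\geq\taumin(A_1,\tA)$ is needed only to guarantee the region is nonempty and that absolute lower-order terms (e.g. the explicit $e^{-2\gk\tau}$ factors multiplying error terms, and the non-leading parts of the asymptotic expansions of $Z_1, \zp$) are genuinely subordinate.

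The main obstacle I anticipate is the bookkeeping at the two ends of the interval simultaneously: one must track how each term in $\cD_u[z_\pm]$ scales in \emph{both} the small parameter $e^{-\gk\tau}$ and the boundary-layer variables $1-u$ and $u$, and verify that a single choice of $\Amin$ works uniformly across the whole region rather than just in one regime. A secondary subtlety is confirming that the particular solution $\zp$ of the $m=2$ \textsc{ode} can indeed be taken strictly positive on all of $(0,1)$ with the claimed two-sided asymptotics — this requires checking the sign of the inhomogeneity $-\cQ_u[Z_1]/(2(n-1))$ and of the homogeneous solution $u^{-2}(1-u^2)^{1+4\gk}$ and using variation of parameters, keeping in mind that the asserted endpoint behaviors of $\cQ_u[Z_1]$ are already recorded in the excerpt. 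Once $\zp$ is pinned down, the remaining estimates are routine if tedious applications of the triangle inequality with the weights $u^{-j}(1-u)^{\ell}$ as above.
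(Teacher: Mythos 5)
Your overall frame --- substitute the Ansatz into $\cD_u$, exploit bilinearity of $\hcQ_u$, and choose $\zp$ from an \textsc{ode} so that the $\cO(e^{-4\gk\tau})$ coefficient has a favorable sign --- is the paper's frame, but your specific choice of $\zp$ breaks the argument. If $\zp$ solves the $m=2$ recursion \eqref{eq:Zm-recursion}, then the linear-in-$\zp$ part of $e^{4\gk\tau}\cD_u[z_+]$, namely $-\tfrac12(u^{-1}-u)\zp'-(u^{-2}+4\gk)\zp$, is pinned to equal $+\cQ_u[Z_1]/(2(n-1))$, so the non-decaying part of $e^{4\gk\tau}\cD_u[z_+]$ is exactly $(\tA-1)\,\cQ_u[Z_1]/(2(n-1))$: the cancellation you invoke occurs only at $\tA=1$, whereas the lemma (and the later patching in Lemma~\ref{PastingLemma}) needs arbitrary $\tA\geq\Amin$. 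For large $\tA$ the sign of this term is the sign of $\cQ_u[Z_1]$, and near $u=1$ one has $\cQ_u[Z_1]\eh -A_1^2\,2^{1+4\gk}(1-4\gk^2)(1-u)^{4\gk}<0$, so increasing $\tA$ drives $\cD_u[z_+]$ negative there and $z_+$ fails to be a supersolution. Your fallback sign argument --- that the dominant pieces of your $\mathcal{M}_u$ hit a positive $\zp$ with a good sign --- does not hold either: the zeroth-order part of $\cD_u$ contributes $-u^{-2}\zp$ (the wrong sign if $\zp>0$), and once $\zp$ solves the recursion the sign of the whole linear expression cannot be inferred from the sign of $\zp$ at all. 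Finally, the strictly positive particular solution you postulate does not exist in general: for $n>4$ the forcing $-\cQ_u[Z_1]/(2(n-1))\eh\frac{(n-4)A_1^2}{n-1}u^{-6}$ forces \emph{every} solution of \eqref{eq:Zm-recursion} with $m=2$ to behave like a negative multiple of $u^{-4}$ as $u\searrow0$, and indeed the paper explicitly refrains from claiming $\zp>0$ (its $\zp$ is negative near $u=1$ by \eqref{eq:zp-asymptotics}).

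The paper's mechanism is different and avoids all of this: it makes no attempt to cancel $\cQ_u[Z_1]$. Instead it defines $\zp$ by \eqref{eq:zetap}, whose right-hand side is the \emph{positive majorant} $u^{-6}(1-u^2)^{4\gk}$ of $|\cQ_u[Z_1]|\leq CA_1^2u^{-6}(1-u^2)^{4\gk}$. Then the $\tA$-proportional term in \eqref{eq:z-substituted-in-Du} equals $+\tA\,u^{-6}(1-u^2)^{4\gk}$, so setting $\Amin=2CA_1^2$ makes the $\tau$-independent part bounded below by $CA_1^2u^{-6}(1-u^2)^{4\gk}>0$, with the estimate improving as $\tA$ grows; the subsolution case follows by flipping the sign of $\tA$. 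Your description of how the two endpoint constraints arise --- comparing the decaying cross term $\hcQ_u[Z_1,\zp]$ and the $\tA^2$-quadratic term against the main term, separately near $u=0$ and near $u=1$, to get $u\gtrsim\sqrt{\tA/A_1}\,e^{-\gk\tau}$ and $1-u\gtrsim(\tA/A_1)^{k/2}e^{-\mu_k\tau}$ --- is the right picture and is essentially how the paper determines $B_1$ and $\taumin$, but it only becomes available after the sign of the $\tA$-term has been arranged as above.
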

We do not claim that $\zp(u)>0$, so the subsolutions and supersolutions provided
by this lemma may not be ordered.  In Lemma~\ref{OrderIntermediate}, below, we
construct properly ordered barriers $z_\pm$ by specifying a pair of constants
$A_1^{+}$ and $A_1^{-}$, setting $A_1=A_1^{+}$ in the expression
\eqref{eq:barrier-ansatz} for $z_{+}$ and $A_1=A_1^{-}$ in the expression
\eqref{eq:barrier-ansatz} for $z_{-}$. The time-dependent
domain~\eqref{eq:intermediate-barrier-valid} where these barriers are valid
provides the precise characterization of the intermediate region, as promised in
Section~\ref{ApproximateIntermediate} above.

\begin{proof}
  We verify here that $z_+$ is a supersolution.  (The verification
  that $z_-$ is a subsolution follows from the same argument, with the
  sign of $\tA$ changed.)
  Applying the operator $\cD _u$ defined in \eqref{eq:Du-definition}
  to the expression~\eqref{eq:barrier-ansatz}, we obtain
  \begin{multline*}
    \cD _u[z_+] = e^{-2\gk\tau}\Bigl\{ -\frac12\bigl( u^{-1} - u
    \bigr){Z_1}' -\bigl( u^{-2} + 2\gk \bigr) Z_1
    \Bigr\}\\
    +e^{-4\gk\tau} \Bigl\{ - \frac12\bigl( u^{-1} - u \bigr)\tA\zp' -
    \bigl(u^{-2}+4\gk\bigr)\tA\zp
    -\frac{1}{2(n-1)}\cQ _u[Z_1]\\
    \qquad - \frac{\tA}{n-1} e^{-2\gk\tau} \hcQ_u{[Z_1, \zp]}
    -\frac{\tA^2}{2(n-1)} e^{-4\gk\tau} \cQ _u[\zp] \Bigr\}.
  \end{multline*}
  It follows from the definition of $Z_1$ that the terms multiplying
  $e^{-2\gk\tau}$ vanish; thus we have
  \begin{multline}
    e^{4\gk\tau} \cD _u[z_+] = \tA \Bigl\{ -\frac12\bigl( u^{-1} - u
    \bigr)\zp' - \bigl(u^{-2}+4\gk\bigr)\zp \Bigr\}
    - \frac{1}{2(n-1)}\cQ _u[Z_1]\\
    - \frac{\tA}{n-1} e^{-2\gk\tau} \hcQ_u{[Z_1, \zp]} -
    \frac{\tA^2}{2(n-1)} e^{-4\gk\tau} \cQ _u[\zp].
    \label{eq:z-substituted-in-Du}
  \end{multline}
  We want to choose $\zeta$ and $\tA$ so that for sufficiently large
  values of $\tau$, $\cD _u[z_+]\geq 0$ ($\cD _u[z_-]\leq 0$ for the
  subsolution).  To achieve this, we first note that as
  $\tau\to\infty$, only the first two terms on the right in
  \eqref{eq:z-substituted-in-Du} remain.  We therefore choose $\zp$
  so that those first two terms together are positive so long as $\tA$
  is large enough. This then implies that for any fixed $u\in(0,1)$,
  one has $\cD_u[z_+](u,\tau)>0$ if $\tau$ is large enough.  We
  conclude the proof by estimating how large $\tau$ must be for the
  constant terms in \eqref{eq:z-substituted-in-Du} to outweigh the
  remaining terms.

\subsubsection*{Estimate for $\cQ_u[Z_1]$:}
For $0<u<1$ (a necessary condition for the intermediate region), it is
easy to see that
\[
|{Z_1}'| \leq \frac{C} {u(1-u^2)} Z_1 \quad\text{ and }\quad
|{Z_1}''|\leq \frac{C}{u^2(1-u^2)^2}Z_1.
\]
Using $\cQ _u[Z_1] = Z_1{Z_1}'' - \frac12 ({Z_1}')^2 -u^{-1}
Z_1{Z_1}'-2(n-1) u^{-2} Z_1^2$, we find that
\begin{equation}
  \bigl|\cQ _u[Z_1]\bigr|
  \leq CA_1^2 u^{-6}(1-u^2)^{4\gk}.
  \label{eq:QuZ1-estimate}
\end{equation}
Here and in what follows, unnumbered constants $C$ are generic and may
change from line to line.

\subsubsection*{Choice of $\zp$:}
Let $\zp : (0,1)\to\R $ be any solution of the inhomogeneous
\textsc{ode}
\begin{equation}
  -\frac12  \bigl( u^{-1} - u \bigr)
  \frac{\rd\zeta} {\rd u} - \bigl(4\gk+u^{-2}\bigr)\zeta
  = u^{-6}(1-u^2)^{4\gk}.
  \label{eq:zetap}
\end{equation}
Replacing $u$ by $u^2$ as the independent variable, one can rewrite
this equation as
\begin{equation}
  \frac{\rd\zeta}{\rd(u^2)} +
  \Bigl\{
  \frac{1+ 4\gk }{1-u^2} + \frac{1}{u^2}
  \Bigr\}
  \zeta=
  - \frac{(1-u^2)^{4\gk-1}}{u^6},
  \label{eq:zetap-regsing}
\end{equation}
whereupon it is easy to see that the equation has regular singular
points both at $u=0$ and at $u=1$. The solution of the corresponding
homogeneous equation is given by \eqref{eq:Zm-homogeneous} with $m=2$,
namely $\zeta_{\rm hom} = \hat c_2 u^{-2}(1-u^2)^{1+4\gk}$. One can either
use the Frobenius method of power series or variation of constants to
conclude that every choice of $\zp$ solving \eqref{eq:zetap} has the
asymptotic behavior
\begin{equation}
  \zp(u) =\left\{
    \begin{aligned}
      u^{-4} &+ \cO(u^{-2}\log u) & (u\searrow0),\\
      -(1-u^2)^{4\gk} &+\cO\bigl((1-u^2)^{4\gk+1}\log (1-u^2)\bigr) &
      (u\nearrow1)
    \end{aligned}\right.
  \label{eq:zp-asymptotics}
\end{equation}
at the endpoints of the interval $0<u<1$.

\subsubsection*{Constructing supersolutions:}
Estimate \eqref{eq:QuZ1-estimate} and formula~\eqref{eq:z-substituted-in-Du}
together tell us that
\begin{multline*}
  e^{4\gk\tau} \cD _u[z_+] \geq
  \left\{\tA-CA_1^2\right\}u^{-6}(1-u)^{4\gk}\\
  -\frac{\tA}{n-1} e^{-2\gk\tau}|\hcQ_u{[Z_1, \zp]}|
  -\frac{\tA^2}{2(n-1)}e^{-4\gk\tau}|\cQ _u[\zp]|.
\end{multline*}
To ensure that the $\tau$-independent term is positive, we must choose
$\tA > CA_1^2$.  We set
\[
\Amin = 2CA_1^2,
\]
which then implies that for all $\tA \geq \Amin$, one has
\begin{multline}
  e^{4\gk\tau} \cD _u[z_+] \geq\\
  CA_1^2 u^{-6}(1-u)^{4\gk} -\frac{\tA}{n-1}
  e^{-2\gk\tau}|\hcQ_u{[Z_1, \zp]}|
  -\frac{\tA^2}{2(n-1)}e^{-4\gk\tau}|\cQ _u[\zp]|.
\end{multline}

\subsubsection*{Determining where $z_+$ is a supersolution:}
By definition, one has
\[
2\hcQ_u{[Z_1, \zeta]} = Z_1\zeta''+ {Z_1}''\zeta - {Z_1}'\zeta'
-u^{-1}\bigl(Z_1\zeta'+{Z_1}'\zeta\bigr) -4(n-1)u^{-2}Z_1\zeta.
\]
For $0<u\leq\frac12$, we recall \eqref{eq:zp-asymptotics} to see that
\[
|\hcQ_u{[Z_1, \zp]}| \leq C A_1 u^{-8}
\]
and
\[
|\cQ _u[\zp]|\leq C u^{-10}.
\]
Combining these estimates, we find there is $c>0$ such that
\begin{align*}
  e^{4\gk\tau}\cD _u[z_+] & \geq c u^{-6} \bigl( A_1^2- CA_1\tA
  e^{-2\gk\tau}u^{-2} - C\tA^2 e^{-4\gk\tau}u^{-4}
  \bigr) \\
  & = cu^{-6}A_1^2 \Bigl( 1 - C \frac{\tA}{A_1(e^{\gk\tau}u)^2} - C
  \frac{\tA^2}{A_1^2(e^{\gk\tau}u)^4} \Bigr)
\end{align*}
for $0<u\leq \frac12$.  It follows from this inequality that there
exists a constant $B_1<\infty$ such that $z_+$ is a supersolution in
the region $(e^{\gk\tau}u)^2 \geq B_1^2 \tA/A_1$, equivalently
\[
B_1 \sqrt{\frac{\tA}{A_1}} e^{-\gk\tau} \leq u\leq \frac12.
\]

On the other hand, for the case $\frac12\leq u <1$, we have
$Z_1\sim A_1(1-u^2)^{1+2\gk}$ and $\zp\sim (1-u^2)^{4\gk}$, so that
\[
|\hcQ_u{[Z_1, \zp]}|\leq CA_1 (1-u^2)^{-1+6\gk}
\]
and
\[
|\cQ _u[\zp]| \leq C (1-u^2)^{-2+8\gk}.
\]
Thus in the interval $\frac12\leq u <1$, we have
\begin{multline*}
  e^{4\gk\tau}\cD_u[z_+] \geq\\
  c(1-u^2)^{4\gk} \bigl\{ A_1^2-CA_1\tA e^{-2\gk\tau}(1-u^2)^{-1+2\gk}
  - C\tA^2 e^{-4\gk\tau}(1-u^2)^{-2+4\gk} \bigr\}.
\end{multline*}
Hence, making $B_1$ larger if necessary, we see that $z_+$ is a
supersolution if
\[
1-u^2 \geq B_1 \left( \frac{\tA}{A_1} e^{-2\gk\tau}
\right)^{1/(1-2\gk)} = B_1 \left( \frac{\tA}{A_1} \right)^{k/2}
e^{-\mu_k\tau}.
\]
Since $1-u^2=(1-u)(1+u)$, we have $1-u<1-u^2<2(1-u)$, which proves the
lemma for $z_+$. The same arguments apply to $z_-$, so the proof is complete.
\end{proof}

We now construct a pair of properly ordered barriers, based on the sub- and
supersolutions of Lemma~\ref{lem:intermediate-barrier}. To do this, we first specify
\begin{equation}
A_1^\pm:=(1\pm\delta)c_k
\label{A1+-}
\end{equation}
where $c_k$ is defined in \eqref{eq:c1-bk-relation}, and $0<\delta\ll\frac12$ is a small
constant fixed here once and for all. We then define $A_3:=\max\{\tA(A_1^-),\tA(A_1^+)\}$,
where the dependence of $\tA$ on $A_1^+$ and $A_1^-$ is that which is discussed in
Lemma~\ref{lem:intermediate-barrier}, with the added requirement that
$\tA(A_1^\pm)\geq\Amin(A_1^\pm)$ are large enough to satisfy condition~\eqref{ReduceDependence}
in Lemma~\ref{PastingLemma} below. (This is necessary so that the ordered barriers we construct
here are compatible with those we construct in Section~\ref{sec:tip-region-sub-super} for the
tip region.) We further fix $\tau_2:=\max\{\taumin(A_1^-),\taumin(A_1^+)\}$
and $B_2:=\max\{B_1(A_1^-),B_1(A_1^+)\}$. We then have the following:

\begin{lemma}
\label{OrderIntermediate}
There exist $\tau_3\geq\tau_2$ and $B_3\geq B_2$ depending only on $b_k$, such that if $\zp$
is the function appearing in Lemma~\ref{lem:intermediate-barrier}, then
\begin{equation}
   z_\pm(u,\tau) = (1\pm\delta)c_k e^{-2\gk\tau}u^{-2}(1-u^2)^{1+2\gk}
    \pm A_3 e^{-4\gk\tau}\zp(u)
\end{equation}
are a pair of properly ordered barriers in the space-time region
\begin{equation}
    B_2\sqrt{\frac{2A_3}{c_k}}e^{-\gk\tau}
    \leq u\leq
    1-B_3\left( \frac{2A_3}{c_k} \right)^{k/2} e^{-\mu_k\tau},
    \qquad
    \tau\geq\tau_3.
    \label{eq:intermediate-barrier-valid}
\end{equation}
\end{lemma}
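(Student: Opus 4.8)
The plan is to assemble the ordered pair directly from the (generally unordered) sub/supersolutions of Lemma~\ref{lem:intermediate-barrier}, exploiting that the two amplitudes $A_1^\pm=(1\pm\delta)c_k$ are comparable to $c_k$ (since $0<\delta<\tfrac12$, one has $\tfrac12c_k<A_1^\pm<\tfrac32c_k$), and then spending the bulk of the effort verifying the ordering $z_-\le z_+$. Concretely, I would take $\tA=A_3$ throughout. Because $A_3=\max\{\tA(A_1^-),\tA(A_1^+)\}\ge\tA(A_1^\pm)\ge\Amin(A_1^\pm)$, Lemma~\ref{lem:intermediate-barrier} applies: the function $z_+$ built from $A_1=A_1^+$ is a supersolution of \eqref{eq:z-u-evolution} and the function $z_-$ built from $A_1=A_1^-$ is a subsolution, each in the region that lemma supplies.

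\textbf{Step 1: the barrier region is contained in both validity regions.} I would next check that for any $\tau_3\ge\tau_2$ and any $B_3\ge B_2$, the region \eqref{eq:intermediate-barrier-valid} sits inside the validity region of Lemma~\ref{lem:intermediate-barrier} for both $z_+$ (with $A_1=A_1^+$) and $z_-$ (with $A_1=A_1^-$). Indeed $A_3/A_1^\pm\le 2A_3/c_k$ and $B_1(A_1^\pm)\le B_2$, so $u\ge B_2\sqrt{2A_3/c_k}\,e^{-\gk\tau}$ implies $u\ge B_1(A_1^\pm)\sqrt{A_3/A_1^\pm}\,e^{-\gk\tau}$; likewise, for $B_3\ge B_2$, the bound $u\le 1-B_3(2A_3/c_k)^{k/2}e^{-\mu_k\tau}$ implies $u\le 1-B_1(A_1^\pm)(A_3/A_1^\pm)^{k/2}e^{-\mu_k\tau}$; and $\tau\ge\tau_3\ge\tau_2\ge\taumin(A_1^\pm)$ covers the time restriction. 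Hence $z_+$ is a supersolution and $z_-$ a subsolution on all of \eqref{eq:intermediate-barrier-valid}.

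\textbf{Step 2: ordering.} This is the substance. Subtracting,
\[
  z_+-z_- = 2\delta c_k\,e^{-2\gk\tau}u^{-2}(1-u^2)^{1+2\gk} + 2A_3\,e^{-4\gk\tau}\zp(u),
\]
and I would prove the right side is positive on \eqref{eq:intermediate-barrier-valid} by splitting $(0,1)$ at a threshold $\epsilon_0\in(0,\tfrac12)$, depending only on $b_k$, chosen so that the asymptotics \eqref{eq:zp-asymptotics} hold with relative error at most $\tfrac12$ on $(0,\epsilon_0]$ and on $[1-\epsilon_0,1)$. For $0<u\le\epsilon_0$ one has $\zp(u)\ge\tfrac12u^{-4}>0$, so both terms are positive. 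For $\epsilon_0\le u\le 1-\epsilon_0$ the factor $u^{-2}(1-u^2)^{1+2\gk}$ is bounded below and $|\zp|$ bounded above by positive constants depending only on $\epsilon_0$, so positivity follows once $e^{2\gk\tau}$ exceeds an explicit constant — this fixes the lower bound on $\tau_3$. For $1-\epsilon_0\le u\le 1-B_3(2A_3/c_k)^{k/2}e^{-\mu_k\tau}$ one uses $\zp(u)=-(1-u^2)^{4\gk}(1+O(1))$ with $|O(1)|\le\tfrac12$ and $u^{-2}\ge 1$, which reduces positivity to $2\delta c_k(1-u^2)^{1+2\gk-4\gk}>3A_3e^{-2\gk\tau}$; since $1+2\gk-4\gk=1-2\gk=2/k$, and in this range $1-u^2\ge 1-u\ge B_3(2A_3/c_k)^{k/2}e^{-\mu_k\tau}$, while $(e^{-\mu_k\tau})^{2/k}=e^{-2\gk\tau}$ because $\mu_k/k=\gk$, the left side is at least $4\delta A_3B_3^{2/k}e^{-2\gk\tau}$, so the inequality holds provided $B_3^{2/k}>\tfrac{3}{4\delta}$.

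\textbf{Step 3: choosing the constants, and the formal-solution bracketing.} Accordingly I would set $B_3:=\max\{B_2,(3/(4\delta))^{k/2}\}$ and let $\tau_3$ be the maximum of $\tau_2$ and the explicit lower bound produced in the middle range; both depend only on $b_k$. The remaining requirement in the definition of a ``properly ordered'' pair — that $z_-$ lies below and $z_+$ above the formal solution $e^{-2\gk\tau}Z_1(u)$ (with $A_1=c_k$ in \eqref{eq:Z1}) — is checked by the identical three-range argument applied to $z_+-e^{-2\gk\tau}Z_1$ and to $e^{-2\gk\tau}Z_1-z_-$, which produces conditions on $B_3$ (and $\tau_3$) of exactly the same shape and may force a further enlargement of $B_3$ of the same type. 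The one genuinely delicate point — and the main obstacle — is the range near $u=1$: there the gap between $z_+$ and $z_-$ coming from the leading terms is of order $e^{-2\gk\tau}(1-u^2)^{1+2\gk}$, which is a \emph{higher} power of $1-u^2$ than the correction term $A_3e^{-4\gk\tau}(1-u^2)^{4\gk}$; the leading gap prevails only because of the extra factor $e^{-2\gk\tau}$ it carries, and this is precisely why the right endpoint of the valid region must be pulled inward, i.e.\ why $B_3$ must be taken strictly larger than $B_2$. Getting the ordering threshold here to line up quantitatively with the supersolution threshold of Lemma~\ref{lem:intermediate-barrier} is the crux of the computation.
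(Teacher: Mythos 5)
Your proposal is correct and follows essentially the same route as the paper: subtract the two candidates, handle $u\searrow0$ via the positivity of $\zp\sim u^{-4}$, handle compact middle intervals by taking $\tau_3$ large, and near $u=1$ use $\zp\sim-(1-u^2)^{4\gk}$ together with $1-u\geq B_3(2A_3/c_k)^{k/2}e^{-\mu_k\tau}$ and $\mu_k/k=\gk$ to see that the $2\delta c_k$ leading-order gap dominates once $B_3$ is large (the paper gets the same inequality using $|w\log w|\leq1$ with $w=1-u^2$). Your additional verifications — that the stated region lies inside both validity regions from Lemma~\ref{lem:intermediate-barrier} and that the pair brackets the leading-order formal solution — are left implicit in the paper and are harmless.
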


\begin{proof}
Using the asymptotic expansion~\eqref{eq:zp-asymptotics} for $\zp$,
we easily determine that $z_- < z_+$ as $u\searrow0$. It is also
straightforward to verify that on any interval $c\leq u\leq 1-c$,
there exists $\tau_3\geq\tau_2$ such that $z_- < z_+$ if
$c\leq u\leq 1-c$ and $\tau\geq\tau_3$.

To study the rest of the intermediate region, we
define $w:=1-u^2$. Then as $u\nearrow1$, the asymptotic
expansion~\eqref{eq:zp-asymptotics} implies that
\[
    w^{-1-2\gk}(z_+ - z_-) \geq
    2\delta c_k u^{-2}
    -2A_3 e^{-4\gk\tau} w^{2\gk-1}(1+w\log w).
\]
Using the facts that $2\gk-1=-2/k$ and that $|w\log w|\leq 1$
for $0<w\leq1$, one then estimates
for $u\leq1-B_3(2A_3 /c_k)^{k/2}e^{-\mu_k\tau}$
and $\tau\geq\tau_3$ that
\begin{align*}
    w^{-1-2\gk}(z_+ - z_-)
    &\geq 2\delta c_k u^{-2}
    -4A_3 e^{-4\gk\tau} w^{-2/k}\\
    &\geq 2c_k
    \big\{\delta u^{-2}
    -B_3^{-2/k}(1+u)^{-2/k}e^{-2\gk\tau_3}\big\}\\
    &>0
\end{align*}
provided that $B_3\geq B_2$ is chosen sufficiently large.
\end{proof}

\subsection{Barriers in the tip region}
\label{sec:tip-region-sub-super}
In Section~\ref{approximate-sol-at-tip}, we observe that the evolution
of the metric is governed by the parabolic \textsc{pde} $\cT _r[z] =
0$, where $r = e^{\gk\tau} u$, and where $\cT _r$ is defined in
equation~\eqref{eq:dz-dtau-r-constant}.
This motivates us to look for sub- and supersolutions --- and
then properly ordered upper and lower barriers --- of the form
$z_\pm=\bry(A_4 r)\pm e^{-2\gk\tau} \beta(r)$, where $\beta(r)$ is
to be chosen to solve a suitable \textsc{ode}, where $A_4$ is a
parameter to be determined, and where we recall that $\bry$ denotes
the functional expression for the Bryant soliton.

\begin{lemma}\label{lem:tip-barrier}
  For any $A_4>0$, there exist a bounded function
  $\beta:(0,\infty)\to\R$, a sufficiently small $B_4>0$, and a
  sufficiently large $\tau_4<\infty$, all depending only on $A_4$,
  such that
  \begin{equation}
    z_\pm := \bry(A_4r) \pm e^{-2\gk\tau} \beta(r)
    \label{eq:tip-barrier}
  \end{equation}
  are sub- $(z_-)$ and super- $(z_+)$ solutions in the region
  \begin{equation}
    0\leq r\leq B_4e^{\gk\tau}
    \label{eq:tip-barrier-valid}
  \end{equation}
  for all $\tau\geq\tau_4$.
\end{lemma}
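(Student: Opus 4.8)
The plan is to substitute the two-term \emph{Ansatz}~\eqref{eq:tip-barrier} into the operator $\cT_r$ of~\eqref{eq:dz-dtau-r-constant}, to take $\beta$ to be a constant multiple of the first correction in the formal expansion~\eqref{eq:tip-expansion}, and to verify the sub/supersolution inequalities by a leading-order computation plus a remainder estimate. Recall that $\cE_r$ is the sum of a linear part $\cL_r$ and a purely quadratic part $\cQ_r=\hcQ_r[\cdot,\cdot]$, and that $\cE_r[\bry(A_4 r)]=0$ for every $A_4>0$ (Section~\ref{approximate-sol-at-tip}). Hence the Taylor expansion of $\cE_r$ about $\bry(A_4 r)$ terminates: with $\cL^B[\beta]:=\cL_r[\beta]+2\hcQ_r[\bry(A_4 r),\beta]$ the linearization of $\cE_r$ at $\bry(A_4 r)$, one computes $\cE_r[z_\pm]=\pm e^{-2\gk\tau}\cL^B[\beta]+e^{-4\gk\tau}\cQ_r[\beta]$. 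Combining with the terms $e^{-2\gk\tau}(z_\tau+\tfrac rk z_r)$ and using $\partial_\tau\bry(A_4 r)=0$ gives
\begin{multline*}
  \cT_r[z_\pm]=e^{-2\gk\tau}\Bigl\{\tfrac rk A_4\,\bry'(A_4 r)\mp\tfrac{1}{2(n-1)}\cL^B[\beta]\Bigr\}\\
  {}+e^{-4\gk\tau}\Bigl\{\mp2\gk\beta\pm\tfrac rk\beta'-\tfrac{1}{2(n-1)}\cQ_r[\beta]\Bigr\},
\end{multline*}
and $z_+$ is a supersolution iff $\cT_r[z_+]\ge0$, $z_-$ a subsolution iff $\cT_r[z_-]\le0$.

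I would then fix a constant $\lambda>1$ (say $\lambda=2$) and set $\beta:=\lambda\,\beta_1$, where $\beta_1$ is the first correction in~\eqref{eq:tip-expansion}, i.e.\ the solution of the linear \textsc{ode} $\cL^B[\beta_1]=2(n-1)\,\tfrac rk A_4\,\bry'(A_4 r)$ obtained by forcing the $O(e^{-2\gk\tau})$ term of $\cT_r[\bry(A_4 r)+e^{-2\gk\tau}\beta_1]$ to vanish. Solvability within the class of bounded functions uses the structure of $\cL^B$: it has a regular singular point at $r=0$ with indicial roots $2$ and $1-n$, and since the forcing is $O(r^2)$ while $2+2=4$ is \emph{not} an indicial root, the distinguished solution with no $r^{1-n}$ mode is analytic at $r=0$, vanishing like $r^2$ with no logarithmic term; as $r\to\infty$, using $\bry(\rho)=\rho^{-2}(1+o(1))$ the forcing is $O(r^{-2})$, and the term $2(n-1)r^{-2}\beta$ in $\cL_r[\beta]$ drives every solution to a finite limit. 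Thus $\beta$ is bounded on $(0,\infty)$, $\beta(0)=0$, and $\beta$ depends only on $A_4$. Since $\bry$ decreases on $(0,\infty)$ --- a standard property of the Bryant soliton, with $\bry(0)=1$, $\bry'(0)=0$ --- this choice makes the $e^{-2\gk\tau}$-bracket of $\cT_r[z_+]$ equal $(\lambda-1)\tfrac rk A_4\,|\bry'(A_4 r)|\ge0$ and that of $\cT_r[z_-]$ equal $-(\lambda+1)\tfrac rk A_4\,|\bry'(A_4 r)|\le0$ --- the signs we want, with a definite gap of size $\asymp r\,|\bry'(A_4 r)|$.

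It remains to show the $O(e^{-4\gk\tau})$-bracket, $R_\pm(r):=\mp2\gk\beta\pm\tfrac rk\beta'-\tfrac{1}{2(n-1)}\cQ_r[\beta]$, is beaten by the leading bracket in the region~\eqref{eq:tip-barrier-valid}. Differentiating the \textsc{ode} and using the endpoint asymptotics yields $|\beta|+(1+r)|\beta'|+r^2|\beta''|\le C(A_4)$, hence $|R_\pm|\le C(A_4)$ on $(0,\infty)$, with $R_\pm=O(r^2)$ as $r\to0$ and $R_\pm=O(1)$ as $r\to\infty$. On a fixed compact $0<r_0\le r\le r_1$ the leading bracket is bounded below by a positive constant, so it dominates $e^{-2\gk\tau}R_\pm$ once $e^{-2\gk\tau}$ is small; near $r=0$ both brackets are $\asymp r^2$, so the leading one wins for $\tau$ large, \emph{uniformly in $r$}; near $r=\infty$ the leading bracket is $\asymp r^{-2}$ while $e^{-2\gk\tau}|R_\pm|\le C(A_4)e^{-2\gk\tau}$, so the leading one wins precisely for $r\lesssim e^{\gk\tau}$. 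Combining, there exist $B_4>0$ small and $\tau_4<\infty$ large, depending only on $A_4$, such that $\cT_r[z_+]\ge0$ and $\cT_r[z_-]\le0$ on $0\le r\le B_4 e^{\gk\tau}$, $\tau\ge\tau_4$. One also shrinks $B_4$ so that $z_+=\bry(A_4 r)+e^{-2\gk\tau}\beta(r)$ stays positive there --- necessary because $\bry(A_4 r)\downarrow0$ as $r\to\infty$ while $e^{-2\gk\tau}\beta(r)$ tends to a nonzero limit --- which finishes the construction.

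The main obstacle is the tip end $r\to0$. There $\bry'(0)=0$ forces the leading bracket to vanish at the same rate ($r^2$) as the remainder, and $\cE_r$ carries the singular coefficients $r^{-1},r^{-2}$; the argument hinges on (i) the \textsc{ode} for $\beta$ being \emph{non-resonant} at $r=0$, so that $\beta$ --- and therefore $\cQ_r[\beta]$ --- vanishes like $r^2$ rather than $r^2(\log r)^2$, and (ii) taking $\tau_4$ large enough that the $r^2$-coefficient of the leading bracket exceeds $e^{-2\gk\tau}$ times that of the remainder, uniformly as $r\to0$. A secondary difficulty is the far end $r\sim B_4 e^{\gk\tau}$, where $z_+$ would otherwise become negative and $R_\pm$ is only $O(1)$; both are cured by taking $B_4$ small relative to $A_4$. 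Ordering $z_-<z_+$ (by using distinct constants $A_4^{\mp}$, as in Lemma~\ref{OrderIntermediate}) and patching with the intermediate barriers (Lemma~\ref{PastingLemma}) are carried out in later lemmas and are not part of this statement.
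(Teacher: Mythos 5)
Your proposal is correct and follows essentially the same route as the paper: expand $\cT_r$ applied to the two-term ansatz, choose $\beta$ to solve the linearized inhomogeneous \textsc{ode} with forcing a supercritical multiple of $r\,\frac{d}{dr}\bry(A_4r)$ (the paper takes $\dA=\frac1k+1$ where you take $\frac2k$, an immaterial difference), establish boundedness and the $r^2$ vanishing of $\beta$ via the regular singular point analysis at $r=0$ and the limiting equation at $r=\infty$, and then beat the $e^{-4\gk\tau}$ remainder by the $e^{-2\gk\tau}$ term using $-r\hat B'(r)\gtrsim\min\{r^2,r^{-2}\}$ on $0\le r\le B_4e^{\gk\tau}$ for $\tau$ large. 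The only cosmetic deviations are that you verify the subsolution symmetrically with $-\beta$ (the paper notes this and also the shortcut $\beta\equiv0$) and add a harmless positivity remark about $z_+$.
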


We note that inequality~\eqref{eq:tip-barrier-valid} provides a working
definition of the tip region, which overlaps the intermediate
region~\eqref{eq:intermediate-barrier-valid} for all sufficiently
large times $\tau$.

As with the intermediate region, we do not claim that $\beta$ has a sign; to get a pair of
properly ordered barriers for the tip region, one makes suitable choices of the parameter
$A_4$ (namely $A_4^-$ and $A_4^+$ as defined in \eqref{OrderParameters} below),
relying on the fact that $\bry$ is monotone decreasing.

\begin{proof}
  Using the abbreviation $\hat B(r) := \bry(A_4r)$, we see that for the
  function $z=\hat B(r)+e^{-2\gk\tau}\beta(r)$ to be a a supersolution, it
  suffices that $\cT _r[z]\geq 0$. Using the definition of $\cT _r$ in
  \eqref{eq:dz-dtau-r-constant} and the definitions of $\cL _r$ and
  $\hcQ_r$ from \eqref{eq:linear-and-quadratic-parts} and
  \eqref{Q-bilinear-def}, respectively, we find that
  \begin{multline}
    \cT _r\bigl[\hat B(r) + e^{-2\gk\tau} \beta(r)\bigr]=
    \label{eq:T-of-B-plus-beta}\\
    e^{-2\gk\tau} \Bigl\{\frac1k r\hat B'(r) - \frac{1}{2(n-1)}
    \bigl(\cL _r[\beta(r)] + 2 \hcQ_r[\hat B(r), \beta(r)]\bigr)\Bigr\}\\
    + e^{-4\gk\tau} \Bigl\{-2\gk\beta(r) + \frac rk \beta'(r) -
    \frac{1}{2(n-1)}\hcQ_r[\beta(r), \beta(r)]\Bigr\}.
  \end{multline}
  Here we use the fact that $\hat B$ satisfies $\cE_r[\hat B]=0$.  For large
  $\tau$, the sign of the \textsc{rhs} is determined by the sign of
  the coefficient of $e^{-2\gk\tau}$.  This coefficient is linear
  inhomogeneous in $\beta$ and its derivatives. To ensure that
  $\hat B(r)+e^{-2\gk\tau}\beta(r)$ is a supersolution for large enough
  $\tau$, we choose $\beta(r)$ in such a way that the factor
  multiplying $e^{-2\gk\tau}$ is positive.  More precisely, we recall
  that $r\hat B'(r)<0$ for all $r>0$, which leads us to choose $\beta$ to
  be a solution of the linear inhomogeneous differential equation
  \begin{equation}
    \cL _r[\beta(r)] + 2 \hcQ_r[\hat B(r), \beta(r)] = 2(n-1)\dA r\hat B'(r),
    \label{eq:beta-diffeq}
  \end{equation}
  with the constant $\dA$ to be chosen below. Applying the definitions of
  $\cL_r$ and $\hcQ_r$, we find that \eqref{eq:beta-diffeq} takes the following form:
  \begin{multline}
    \hat B(r)\frac{\rd^2\beta(r)}{\rd r^2} + \Bigl\{\frac{n-1}{r} - \hat B'(r) -
    \frac{\hat B(r)}{r}\Bigr\}
    \frac{\rd \beta(r)}{\rd r}\\
    + \Bigl\{\hat B''(r) - \frac{\hat B'(r)}{r} + 2(n-1)
    \frac{1-2\hat B(r)}{r^2}\Bigr\}\beta(r) = 2(n-1)\dA r\hat B'(r).
    \label{eq:beta-diffeq-explicit}
  \end{multline}
  This equation has a two-parameter family of solutions. We seek a
  specific solution from this family that leads to the positivity
  of the \textsc{rhs} of \eqref{eq:T-of-B-plus-beta} for
  sufficiently large $\tau$.
  To determine the region on which $\hat B(r)+e^{-2\gk\tau}\beta(r)$ is a
  supersolution, we need to know the asymptotic behavior of our solution
  $\beta(r)$ both as $r\searrow 0$ and as $r\nearrow\infty$.  We now
  show that this asymptotic behavior is the same for all solutions of
  \eqref{eq:beta-diffeq-explicit}, because near both $r=0$ and $r=
  \infty$, we find that the solutions to the homogeneous equation are
  much smaller than any particular solution.

\subsubsection*{Asymptotic behavior at $r=0$:}
At $r=0$, we have $\hat B(0) = 1$, $\hat B'(0) = 0$ and
$\hat B''(0) = -A_4^2\bry''(0)$.  Since $\bry(r)$ is an analytic function
at $r=0$, the \textsc{ode} \eqref{eq:beta-diffeq-explicit} has a regular
singular point at $r=0$, and to leading order can be written as
\[
\frac{\rd^2\beta(r)}{\rd r^2} + \frac{n-2}{r} \frac{\rd \beta(r)}{\rd
  r} - \frac{2(n-1)}{r^2}\beta(r) = -Cr^2
\]
for some constant $C$.  The general solution of this equation is
\[
\beta_0(r)=a_1r^{1-n}+a_2r^2 -\tilde C r^4,
\]
for some $\tilde C$. Discarding the unbounded solution and choosing $a_2=1$,
we conclude that there exists a solution $\beta_p(r)$ of the true
\textsc{ode}~\eqref{eq:beta-diffeq-explicit} that satisfies
$\beta_p(r) = r^2 + o(r^2)$ as $r\searrow0$.


\subsubsection*{Asymptotic behavior at $r=\infty$:}
We have normalized $\bry$ so that $\bry(r)=r^{-2}+\cO (r^{-4})$ as
$r\to\infty$.  This choice yields
$\hat B(r)= \bry(A_4r)=A_4^{-2}r^{-2}+\cO (A_4^{-4} r^{-4})$ near $r=\infty$.
Keeping only the lowest-order terms in $r^{-1}$, we see that for $r$ large,
the \textsc{ode}~\eqref{eq:beta-diffeq-explicit} is a small
perturbation of the \textsc{ode}
\begin{equation}
  \frac{1}{A_4^{2}r^{2}}\beta_\infty''(r)
  +
  \frac{n-1}{r}\beta_\infty'(r)
  +
  \frac{2(n-1)}{r^2}\beta_\infty(r) =
  -\frac{4(n-1)\dA}{A_4^{2}r^{2}},
  \label{TipPert}
\end{equation}
whose general solution is
\begin{equation}
  \beta_\infty(r)=
  \tilde a_1 re^{-\alpha r^2}
  + \tilde a_2 r \int_1^r \rho^{-2}e^{-\hat\alpha(r^2-\rho^2)}\,\rd \rho
  - \frac{2\dA}{A_4^2},
  \label{SolnTipPert}
\end{equation}
where $\hat\alpha:=\frac{n-1}{2}A_4^2$. The first two terms in
\eqref{SolnTipPert}, which solve the homogeneous equation associated
to \eqref{TipPert}, both vanish as $r\nearrow\infty$: the first has
Gaussian decay; the second is $\cO (r^{-2})$. Hence every solution of
the true \textsc{ode}~\eqref{eq:beta-diffeq-explicit}, including the
choice of $\beta_p(r)$ made above, has the property that
$\beta(r)=(1+o(1))\big(-2\dA/A_4^2\big)$ as $r\to\infty$.

To determine how to choose $\dA$, we now estimate
\begin{multline*}
  \cT_r[\hat B(r) + e^{-2\gk\tau}\beta_p(r)]
  =e^{-2\gk\tau} \bigl(\frac1k - \dA\bigr) r\hat B'(r) \\
  +e^{-4\gk\tau} \Bigl( -2\gk\beta_p(r) + \frac{r}{k} \beta_p'(r)
  -\frac{\hcQ_r[\beta_p(r), \beta_p(r)]}{2(n-1)} \Bigr).
\end{multline*}
Since $\hat B'(r)<0$, the first term on the \textsc{rhs} is positive if we
choose $\dA>\frac{1}{k}$. To make a definite choice, we set%
\[
\dA := \frac{1}{k} + 1.
\]
Near $r=0, \infty$, the asymptotics of $\bry$ then imply that
\[
-r\hat B'(r) =
\begin{cases}
  \tilde c_1 r^2 + o(r^2), & (r\searrow0)  \\
  \tilde c_2 r^2 + o(1/r^2) & (r\nearrow\infty)
\end{cases}
\]
for certain positive constants $\tilde c_1$ and $\tilde c_2$, and hence that
\[
-r\hat B'(r) \geq \tilde c \min \bigl\{r^2, r^{-2}\bigr\},
\]
for some constant $\tilde c$. We also know for $\beta_p$ that
\[
\beta_p(r) = \cO(r^2) \quad(r\searrow0),\qquad \beta_p(r) =
-\frac{2\dA}{A_4^2} + \cO(r^{-2}) \quad(r\nearrow\infty).
\]
These expansions can be differentiated.  Hence we have for $0 < r \leq 1$,
\[
\Bigl| -2\gk\beta_p(r) + \frac{r}{k} \beta_p'(r)
-\frac{\hcQ_r[\beta_p(r), \beta_p(r)]}{2(n-1)} \Bigr|\leq Cr^2,
\]
and thus
\begin{align*}
  \cT_r[\hat B(r) + e^{-2\gk\tau}\beta_p(r)] &\geq
  -e^{-2\gk\tau} r\hat B'(r)  - e^{-4\gk\tau} Cr^2\\
  &\geq
  e^{-2\gk\tau}r^2 (\tilde c  - e^{-2\gk\tau} C)\\
  & >0,
\end{align*}
if $\tau$ is large enough.  For $r\ge1$, we have
\[
\Bigl| -2\gk\beta_p(r) + \frac{r}{k} \beta_p'(r)
-\frac{\hcQ_r[\beta_p(r), \beta_p(r)]}{2(n-1)} \Bigr|\leq C,
\]
so that
\begin{align*}
  \cT_r[\hat B(r) + e^{-2\gk\tau}\beta_p(r)] &\geq
  -e^{-2\gk\tau} r\hat B'(r)  - e^{-4\gk\tau} C\\
  &\geq
  e^{-2\gk\tau} (\tilde c r^{-2}  - Ce^{-2\gk\tau} )\\
  & >0,
\end{align*}
provided that $r < \tilde\delta e^{\gk\tau}$, where $\tilde\delta = \sqrt{\tilde c/C}$.

This completes the construction of the supersolution.  A subsolution
can be constructed along the same lines, but one could also just set
$\beta(r)=0$ and observe that \eqref{eq:T-of-B-plus-beta} then takes
the form
\[
\cT _r\bigl[\hat B(r) + e^{-2\gk\tau} \beta(r)\bigr]= \frac1k e^{-2\gk\tau}
r\hat B'(r).
\]
Because $\hat B'(r) < 0$ for all $r$, the \textsc{rhs} of this equation is
negative; hence we see that $\hat B(r) =\bry(A_4r)$ is a subsolution.
\end{proof}

We now use the sub- and supersolutions from Lemma~\ref{lem:tip-barrier} to
construct properly ordered upper and lower barriers for the tip region. To
do this, we specify the constants $A_4^\pm$ as follows, using the small constant
$0<\delta\ll1$ fixed above:
\begin{subequations}
\label{OrderParameters}
\begin{align}
    (A_4^-)^{-2} &= (1+\delta)\left(1+\frac38 B_3^{-2}\right)(1-\delta) c_k,\\
    (A_4^+)^{-2} &= (1-\delta)\left(1+\frac12 B_3^{-2}\right)(1+\delta) c_k.
\end{align}
\end{subequations}
We note several consequences of these choices. First, they ensure that both barriers
are close to the formal solution $\bry(c_k^{-1/2}r)$. Second, they ensure that $A_4^+<A_4^-$;
because $\bry$ is monotone decreasing, this implies that $\bry(A_4^- r)<\bry(A_4^+ r)$
for all $r>0$, which is useful in establishing the desired ordering of the barriers.
Third, the choices above, and in particular the leading $1\pm\delta$ factors, ensure that
$A_4^\pm$ satisfy the strict inequalities in condition~\eqref{eq:A1-A2-gluing-condition}
of Lemma~\ref{PastingLemma} below. We use this fact to guarantee that the barriers constructed
here are compatible with those constructed in Lemma~\ref{OrderIntermediate} within the intersection
of the intermediate and tip regions. These regions overlap for all sufficiently large $\tau$,
depending on $B_2$ in Lemma~\ref{OrderIntermediate} and $B_4$ in Lemma~\ref{lem:tip-barrier}.

\begin{lemma}
\label{OrderTip}
If the parameters $A_4^\pm$ take the values specified by~\eqref{OrderParameters},
and if $\beta$ is the function appearing in Lemma~\ref{lem:tip-barrier}, then
there exists $\tau_5\geq\tau_4$ such that
\begin{equation}
    z_\pm = \bry(A_4^\pm r)\pm e^{-2\gk\tau}\beta(r)
\end{equation}
form a pair of properly ordered barriers that are valid for all points
$0\leq r\leq B_4 e^{\gk\tau}$ and all times $\tau\geq\tau_5$.
\end{lemma}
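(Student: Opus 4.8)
The plan is to read off the sub/supersolution property directly from Lemma~\ref{lem:tip-barrier} and to spend the actual work on the ordering $z_-\le z_+$ throughout the tip region. First I would apply Lemma~\ref{lem:tip-barrier} once with $A_4=A_4^+$ and once with $A_4=A_4^-$; since the function $\beta$ produced there depends on $A_4$ (through $\bry(A_4r)$ in \eqref{eq:beta-diffeq-explicit}), the single symbol $\beta$ in the statement of Lemma~\ref{OrderTip} is to be understood as two functions $\beta^\pm$, one for each sign. Taking $\tau_5:=\max\{\tau_4(A_4^+),\tau_4(A_4^-)\}$ and $B_4\le\min\{B_4(A_4^+),B_4(A_4^-)\}$ (possibly shrinking $B_4$ further, as explained below), that lemma tells us at once that $z_+=\bry(A_4^+r)+e^{-2\gk\tau}\beta^+(r)$ is a supersolution and $z_-=\bry(A_4^-r)-e^{-2\gk\tau}\beta^-(r)$ a subsolution of $\cT_r[z]=0$ on $0\le r\le B_4e^{\gk\tau}$, $\tau\ge\tau_5$. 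It then remains only to prove the pointwise inequality $z_-\le z_+$ on that space-time set; once this is in hand, $z_\pm$ are properly ordered barriers by definition.

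To this end I would write
\[
z_+-z_-=\bigl[\bry(A_4^+r)-\bry(A_4^-r)\bigr]+e^{-2\gk\tau}\bigl(\beta^+(r)+\beta^-(r)\bigr).
\]
From \eqref{OrderParameters} one reads off $(A_4^+)^{-2}-(A_4^-)^{-2}=\tfrac18(1-\delta^2)B_3^{-2}c_k>0$ --- this is exactly where the asymmetric coefficients $\tfrac12$ and $\tfrac38$ in \eqref{OrderParameters} are used --- so $A_4^+<A_4^-$, and since $\bry$ is strictly decreasing the first bracket is nonnegative and vanishes only at $r=0$. The $\beta^\pm$ are bounded, satisfy $\beta^\pm(r)=O(r^2)$ as $r\searrow0$, and tend to $-2\dA(A_4^\pm)^{-2}$ as $r\to\infty$, so the correction is $O(e^{-2\gk\tau})$ but has no definite sign: it is positive near the tip and negative far out. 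I would therefore split $0\le r\le B_4e^{\gk\tau}$ into three ranges and show in each that the first bracket outweighs the correction once $\tau$ is large (and, in the outer range, once $B_4$ is small).

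On $0\le r\le\rho_0$ with $\rho_0>0$ fixed and small: the difference $D(r):=\bry(A_4^+r)-\bry(A_4^-r)$ is smooth, has $D(0)=D'(0)=0$, is strictly positive for $r>0$, and has $D''(0)=\bigl((A_4^+)^2-(A_4^-)^2\bigr)\bry''(0)>0$ (since $\bry''(0)<0$ --- the Bryant soliton has positive curvature at the tip --- and $(A_4^+)^2<(A_4^-)^2$), so $D(r)\ge c\,r^2$ for $0\le r\le\rho_0$ and some $c>0$; meanwhile $|e^{-2\gk\tau}(\beta^++\beta^-)|\le Ce^{-2\gk\tau}r^2$ there. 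On $\rho_0\le r\le R_0$ with $R_0$ fixed and large: $D$ is continuous and strictly positive on a compact set, hence $\ge m>0$, while the correction is $\le Ce^{-2\gk\tau}$. On $R_0\le r\le B_4e^{\gk\tau}$: the normalization \eqref{eq:bryant-normalization} gives $\bry(A_4r)=(A_4)^{-2}r^{-2}+O(r^{-4})$, so for $R_0$ large $D(r)\ge\tfrac12\bigl((A_4^+)^{-2}-(A_4^-)^{-2}\bigr)r^{-2}=\tfrac1{16}(1-\delta^2)B_3^{-2}c_k\,r^{-2}$, whereas $|e^{-2\gk\tau}(\beta^++\beta^-)|\le Ce^{-2\gk\tau}$ with $C$ controlled by $\sup_r|\beta^\pm|$; since $r\le B_4e^{\gk\tau}$ forces $r^{-2}\ge B_4^{-2}e^{-2\gk\tau}$, the first term is at least $\tfrac1{16}(1-\delta^2)B_3^{-2}c_kB_4^{-2}e^{-2\gk\tau}$, which beats $Ce^{-2\gk\tau}$ as soon as $B_4$ is small enough relative to $B_3$, $c_k$, and $\sup_r|\beta^\pm|$. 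Shrinking $B_4$ this way is allowed by Lemma~\ref{lem:tip-barrier} and only postpones (without destroying) the overlap of the tip region~\eqref{eq:tip-barrier-valid} with the intermediate region~\eqref{eq:intermediate-barrier-valid}. Enlarging $\tau_5$ if needed then gives $z_+-z_-\ge0$ on all three ranges, completing the proof.

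I expect the outer range to be the only genuine obstacle. There the leading gap $\bigl((A_4^+)^{-2}-(A_4^-)^{-2}\bigr)r^{-2}$ and the correction $e^{-2\gk\tau}(\beta^++\beta^-)$ are both of size $e^{-2\gk\tau}$ at the outer end $r\sim B_4e^{\gk\tau}$, so the ordering there is a genuinely quantitative statement that hinges on the precise constants built into \eqref{OrderParameters} and on taking $B_4$ small. This is, not coincidentally, the same place where compatibility of these tip barriers with the intermediate barriers of Lemma~\ref{OrderIntermediate} must be arranged through Lemma~\ref{PastingLemma}, which is why the parameters $A_4^\pm$ in \eqref{OrderParameters} carry those specific factors rather than simply being set equal to $(1\pm\delta)c_k$.
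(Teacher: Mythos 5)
Your argument is correct, and it follows essentially the same route as the paper's own proof: the ordering is read off from the expansions of $\bry(A_4^\pm r)$ and of $\beta$ near $r=0$ and near $r=\infty$ (using $A_4^+<A_4^-$, the monotonicity of $\bry$, and the exact constants in \eqref{OrderParameters}, whose difference $(A_4^+)^{-2}-(A_4^-)^{-2}=\tfrac18(1-\delta^2)B_3^{-2}c_k$ you compute correctly), with bounded $r$-intervals handled by taking $\tau$ large. The one place where you go beyond the paper is the outer end $r\sim B_4e^{\gk\tau}$: the paper's large-$r$ display is loose there (its leading gap should carry the factor $r^{-2}$, and the $\beta$-contribution involves the sum $(A_4^+)^{-2}+(A_4^-)^{-2}$ rather than the difference), and its conclusion that $z_+-z_->0$ ``for all sufficiently large $\tau$ and $r$'' glosses over precisely the regime you isolate, where the gap and the $\cO(e^{-2\gk\tau})$ term from $\beta^++\beta^-$ are of the same order. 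Your remedy of shrinking $B_4$ is legitimate and harmless: $B_4$ in Lemma~\ref{lem:tip-barrier} is only required to be sufficiently small, the tip barriers are only matched to the intermediate ones of Lemma~\ref{OrderIntermediate} on the bounded interval $[B_3\sqrt{A_3/A_1},\,2B_3\sqrt{A_3/A_1}]$ via Lemma~\ref{PastingLemma}, and the overlap of \eqref{eq:tip-barrier-valid} with \eqref{eq:intermediate-barrier-valid} persists for large $\tau$. Your explicit splitting of the single symbol $\beta$ into $\beta^\pm$ (one for each choice of $A_4$) also makes precise what both the lemma statement and the paper's proof leave implicit.
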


\begin{proof}
Near $r=0$, one has
$\bry(A_4^\pm r)=1-\mathfrak{b}(A_4^\pm r)^2+\cO(r^4)$ for some constant
$\mathfrak{b}>0$, and $\beta(r)=[1+o(1)]r^2$. Thus as $r\searrow0$, we
see that
\[
    z_+ - z_- =
    \big\{\mathfrak{b}[(A_4^-)^2-(A_4^+)^2]+2[1+o(1)]e^{-\gk\tau}\big\}r^2
    +\cO(r^4)>0.
\]

Near $r=\infty$, one has the expansions
$\bry(A_4^\pm r)=1+(A_4^\pm r)^{-2}+\cO(r^{-4})$
and $\beta=[1+o(1)]\big\{-2(1+1/k)(A_4^\pm)^{-2}\big\}$.
It then follows from the specifications of $A_4^\pm$ given
in \eqref{OrderParameters} that
\[
    z_+ - z_- = \frac{(1-\delta^2)B_3^{-2}c_k}{8}
    \left\{1-2[1+o(1)]\frac{k+1}{k}e^{-2\gk\tau}\right\}+\cO(r^{-4}),
\]
which is positive for all sufficiently large $\tau$ and $r$.

It is straightforward to verify that $z_+ > z_-$ on any bounded interval
$c<r<C$, for all sufficiently large $\tau$. The result follows.
\end{proof}

\subsection{The intermediate-tip interface}
Let $\zi_\pm$ denote the upper and lower barriers constructed in
Lemma~\ref{OrderIntermediate} for the intermediate region,
and let $\zt_\pm$ denote the upper and lower barriers constructed in
Lemma~\ref{OrderTip} for the tip region.  The domains of
definition of these two pairs of functions intersect for sufficiently
large times. To obtain barriers that work throughout the union of
these regions, we must verify that $\zi_\pm$ and $\zt_\pm$ are
properly ordered in their intersection. In what follows, we focus on
upper barriers, omitting the entirely analogous argument for
lower barriers. To avoid notational prolixity, we write $A_1$ and $A_4$ for
$A_1^+$ and $A_4^+$, respectively, in the statement of the lemma and its proof.

\begin{lemma}
  \label{PastingLemma}
  Suppose that $A_1$ and $A_4$ satisfy strict inequalities
  \begin{equation}
    \big(1+\frac38 B_3^{-2}\big)A_1
    < A_4^{-2} <
    \big(1+\frac12 B_3^{-2}\big)A_1,
    \label{eq:A1-A2-gluing-condition}
  \end{equation}
  where $B_3$ is the constant from
  Lemma~\ref{OrderIntermediate}.

  Then there exists a constant $\hat C$ such that if
  \begin{equation}
  \label{ReduceDependence}
  A_3 \geq \hat C
  \left\{A_1^{1/2}+\big(1+\frac12 B_3^{-2}\big)^2 A_1^2\right\},
  \end{equation}
  then one has $A_3 \geq \hat C(A_1^{1/2}+A_4^{-4})$, and consequently
  \begin{subequations}
    \begin{align}
      \zt_+ \leq \zi_+ &\text{ at } r= B_3\sqrt{A_3/A_1}, \\
      \zt_+ \geq \zi_+ &\text{ at } r= 2B_3\sqrt{A_3/A_1},
    \end{align}
    \label{TipIntermediateSubsolutionGluingCondition}
  \end{subequations}
  for all $\tau\geq\tau_6$, where $\tau_6\geq\max\{\tau_3,\tau_5\}$
  is sufficiently large.
\end{lemma}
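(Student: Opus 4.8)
The plan is to compare $\zt_+$ and $\zi_+$ \emph{pointwise} at the two radii $r_\lambda:=\lambda B_3\sqrt{A_3/A_1}$, $\lambda\in\{1,2\}$. The first thing to check is that, once $\tau$ is large (this is where $\tau_6$ gets fixed, after possibly enlarging the constant $B_3$ of Lemma~\ref{OrderIntermediate} so that $B_3\ge 2B_2$), both $r_1$ and $r_2$ lie in the overlap of the validity domains \eqref{eq:intermediate-barrier-valid} and \eqref{eq:tip-barrier-valid}. The structural point that makes the comparison tractable is that at these radii the coordinate $u=e^{-\gk\tau}r$ is exponentially small in $\tau$, while $r$ itself has the fixed size $\Theta(\sqrt{A_3/A_1})$. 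So I would insert the $u\searrow0$ asymptotics \eqref{eq:zp-asymptotics} for $\zp$ and $(1-u^2)^{1+2\gk}=1+\cO(u^2)$ into the expression for $\zi_+$ from Lemma~\ref{OrderIntermediate}, and the $r\to\infty$ asymptotics $\bry(\rho)=\rho^{-2}+\cO(\rho^{-4})$ from \eqref{eq:bryant-normalization} together with the boundedness of $\beta$ from Lemma~\ref{lem:tip-barrier} into the expression for $\zt_+$ from Lemma~\ref{OrderTip}.

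Changing to the $r$ variable via $e^{-2\gk\tau}u^{-2}=r^{-2}$ and $e^{-4\gk\tau}u^{-4}=r^{-4}$, and recalling $A_1=(1+\delta)c_k$, this gives
\[
  \zi_+ = A_1 r^{-2} + A_3 r^{-4} + R_{\mathrm{int}},\qquad
  \zt_+ = A_4^{-2} r^{-2} + R_{\mathrm{tip}},
\]
where $R_{\mathrm{int}}$ collects an $\cO(A_1 e^{-2\gk\tau})$ term from the factor $(1-u^2)^{1+2\gk}$ and an $\cO(A_3 e^{-2\gk\tau}\tau\, r^{-2})$-type term from the $u^{-2}\log u$ correction in \eqref{eq:zp-asymptotics}, while $R_{\mathrm{tip}}$ collects the $\cO(A_4^{-4}r^{-4})$ tail of $\bry(A_4 r)$ and an $\cO(e^{-2\gk\tau})$ contribution from $\beta$. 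Subtracting and factoring out $r^{-4}$,
\[
  \zi_+-\zt_+ = r^{-4}\Big\{A_3-(A_4^{-2}-A_1)r^2+\cO(A_4^{-4})+E(\tau,r)\Big\},
\]
where $E(\tau,r)$ is bounded by $e^{-2\gk\tau}$ times a polynomial in $\tau$ times fixed powers of $A_1$ and $A_3$.

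I would then evaluate at $r=r_\lambda$, so that $(A_4^{-2}-A_1)r_\lambda^2=\lambda^2B_3^2 A_3\big(A_4^{-2}/A_1-1\big)$. The gluing hypothesis \eqref{eq:A1-A2-gluing-condition} is precisely the statement $\tfrac38<B_3^2\big(A_4^{-2}/A_1-1\big)<\tfrac12$, so the braced quantity equals
\[
  A_3\big(1-\lambda^2B_3^2(A_4^{-2}/A_1-1)\big)+\cO(A_4^{-4})+E(\tau,r_\lambda),
\]
whose leading term lies in $[\tfrac12A_3,\tfrac58A_3]$ for $\lambda=1$ and in $[-A_3,-\tfrac12A_3]$ for $\lambda=2$. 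The auxiliary implication stated in the lemma is now immediate: $A_4^{-2}<(1+\tfrac12B_3^{-2})A_1$ forces $A_4^{-4}<(1+\tfrac12B_3^{-2})^2A_1^2$, so the hypothesis $A_3\ge\hat C\{A_1^{1/2}+(1+\tfrac12B_3^{-2})^2A_1^2\}$ yields both $A_3\ge\hat C(A_1^{1/2}+A_4^{-4})$ and, provided $\hat C$ is a large enough constant depending only on $n$, $k$ and $\delta$, that the $\cO(A_4^{-4})$ term is $\le\tfrac18A_3$; meanwhile $|E(\tau,r_\lambda)|\le\tfrac18A_3$ once $\tau\ge\tau_6$ with $\tau_6=\tau_6(n,k,b_k)$ large. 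Hence the braced quantity is $\ge\tfrac14A_3>0$ at $r_1=B_3\sqrt{A_3/A_1}$ and $\le-\tfrac14A_3<0$ at $r_2=2B_3\sqrt{A_3/A_1}$, which is exactly \eqref{TipIntermediateSubsolutionGluingCondition}.

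The only genuinely delicate point is the error bookkeeping. One must keep the $\tau$-negligible corrections — those from $(1-u^2)^{1+2\gk}$, from the $u^{-2}\log u$ term, and from $\beta$, all of which can be made $\le\tfrac18 A_3$ merely by enlarging $\tau_6$ — cleanly separated from the one correction that is \emph{not} $\tau$-small, namely the $\cO(A_4^{-4}r^{-4})\sim\cO(A_1^2 r^{-4})$ tail of the Bryant profile. It is exactly this term that forces the lower bound \eqref{ReduceDependence} on $A_3$, and hence the reason that $\hat C$ (and therefore the choice of $A_3$ made in \S\ref{sec:tip-region-sub-super}) must be taken large. A secondary check is that $\hat C$ truly depends only on $n$, $k$, $\delta$ and not circularly on $A_1$, $A_3$, $A_4$; this holds because every implied constant above comes from the fixed asymptotic expansions of $\bry$, of $\beta$, and of $\zp$. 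The lower barriers $\zt_-$, $\zi_-$ are handled by the identical argument with all inequalities reversed.
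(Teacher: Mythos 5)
Your proposal is correct and follows essentially the same route as the paper: expand $\zi_+$ via the $u\searrow0$ asymptotics of $\zp$ and $\zt_+$ via the $r\to\infty$ expansion of the Bryant soliton, compare at the two radii $B_3\sqrt{A_3/A_1}$ and $2B_3\sqrt{A_3/A_1}$, use \eqref{eq:A1-A2-gluing-condition} for the sign of the leading term, \eqref{ReduceDependence} to absorb the $\cO(A_4^{-4}r^{-4})$ Bryant tail, and a large $\tau_6$ to absorb the $\cO(\tau e^{-2\gk\tau})$ corrections. The only differences are cosmetic (normalizing by $r^{-4}$ instead of $r^{-2}$, and bundling the error terms slightly differently), so no further comment is needed.
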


\begin{proof}
  Near infinity, the Bryant soliton has an expansion of the form (see
  Lemma~18 of \cite{ACK09})
  \[
  \bry(\rho) = \frac{1}{\rho^2} + \frac{\mathfrak b_2}{\rho^4}+
  \frac{\mathfrak b_3}{\rho^6} + \dots
  \]
  Written in terms of the $r$ coordinate, the supersolutions satisfy
  the expansion
  \begin{align*}
    \zt_+ & = \bry (A_4 r) + e^{-2\gk\tau} \beta(r)  \\
    &= A_4^{-2} r^{-2} + {\mathfrak b}_2 A_4^{-4} r^{-4} + \cO(r^{-6})
    + \cO(e^{-2\gk\tau})
  \end{align*}
  as $r\rightarrow\infty$. The $\cO(r^{-6})$ term comes from the asymptotic
  expansion of $\bry(A_2r)$ and is therefore uniform in time (in fact,
  independent of time).

  Using our asymptotic description \eqref{eq:zp-asymptotics} of
  $\zp$, we find that as $u\searrow0$, one has
  \begin{align*}
    \zi_+ & = e^{-2\gk\tau}Z_1(u) + e^{-4\gk\tau}A_3 \zp(u) \\
    & = A_1 e^{-2\gk\tau}u^{-2}(1-u^2)^{1+2\gk} + A_3 e^{-4\gk\tau}
    u^{-4}\bigl(1 + \cO(u^2\log u)\bigr)\\
    & = A_1r^{-2}\bigl(1- e^{-2\gk\tau}r^2\bigr)^{1+2\gk}
    + A_3 r^{-4}\bigl(1 + \cO(u^2\log u)\bigr)\\
    & = A_1 r^{-2} + A_3 r^{-4} + \cO\bigl(\tau e^{-2\gk\tau}\bigr),
  \end{align*}
  where $r=e^{\gk\tau}u$, and where $\cO(\tau e^{-2\gk\tau})$ is
  uniform on any compact $r$ interval. Hence on bounded $r$-intervals,
  one has
  \begin{align*}
    r^2\bigl(\zt_+ - \zi_+\bigr) &= (A_4^{-2} - A_1) +
    \bigl({\mathfrak b_2}A_4^{-4} - A_3\bigr) r^{-2}
    + \cO(r^{-4}) + \cO(\tau e^{-2\gk\tau})\\
    &= (A_4^{-2} - A_1) + \bigl({\mathfrak b_2}A_4^{-4}+ \cO(r^{-2}) -
    A_3\bigr) r^{-2} + \cO(\tau e^{-2\gk\tau}).
  \end{align*}

  We want this quantity to change from negative to positive as $r$
  increases from $B_3\sqrt{A_3/A_1}$ to $2B_3\sqrt{A_3/A_1}$. At
  $r=B_3\sqrt{A_3/A_1}$, we have
  \[
  r^2\bigl(\zt_+ - \zi_+\bigr) = A_4^{-2} - A_1 + \left\{
    \frac{{\mathfrak b_2}A_1}{A_3 A_4^{4}} +
    \cO\Big(\bigl(\frac{A_1}{A_3}\bigr)^2\Big)-A_1 \right\} B_3^{-2} +
  \cO(\tau e^{-2\gk\tau}),
  \]
  and at $r=2B_3\sqrt{A_3/A_1}$, we have
  \[
  r^2\bigl(\zt_+ - \zi_+\bigr) = A_4^{-2} - A_1 + \frac14 \left\{
    \frac{{\mathfrak b_2}A_1}{A_3 A_4^{4}} +
    \cO\Big(\bigl(\frac{A_1}{A_3}\bigr)^2\Big)-A_1 \right\} B_3^{-2} +
  \cO(\tau e^{-2\gk\tau}).
  \]
  If $A_3 \geq \hat C A_4^{-4}$ and $A_3 \geq \hat C \sqrt{A_1}$ both
  hold for sufficiently large $\hat C$, then
  \[
  \left| \frac{{\mathfrak b_2}A_1}{A_3 A_4^{4}} +
    \cO\bigl(\frac{A_1^2}{A_3^2}\bigr)\right| \leq \frac{A_1}{2},
  \]
  and we find that
  \begin{align*}
    r^2\bigl(\zt_+ - \zi_+\bigr) &\leq A_4^{-2}
    - \big(1+\frac12 B_3^{-2}\big) A_1 + \cO(\tau e^{-2\gk\tau})
    \text{ at }r = B_3\sqrt{{A_3}/{A_1}}, \\
    r^2\bigl(\zt_+ - \zi_+\bigr) &\geq A_4^{-2}
    - \big(1+\frac38 B_3^{-2}\big)A_1 + \cO(\tau e^{-2\gk\tau})
    \text{ at }r = 2B_3\sqrt{{A_3}/{A_1}}.
  \end{align*}
  From this it is clear that if the strict inequalities
  \eqref{eq:A1-A2-gluing-condition} hold, then the upper barriers
  $\zt_+$ and $\zi_+$ satisfy the patching condition
  \eqref{TipIntermediateSubsolutionGluingCondition} for all
  sufficiently large $\tau$.
\end{proof}

\subsection{Barriers in the outer region}
\label{OuterBanks}
If $k\geq4$ is even, then as explained in Section~\ref{OuterTube},
solutions in $\Xi_\ve$ are controlled by the barriers constructed
above.  If $k\geq3$ is odd, we need additional barriers to define
$(\partial\Xi_\ve)_+$ in a suitable subset of $u\in(1,e^{\tau/2})$.

\begin{lemma}
  \label{OuterBarrierLemma}
  For any $A_5>0$, there exist a function
  $\tilde\zeta:(1,\infty)\to\R$ and constants $A_6$ and $B_5>0$
  depending only on $A_5$ such that
  \begin{equation}
    \tz_\pm=A_5 e^{-2\gk\tau}(u^2-1)^{1+2\gk}u^{-2}
    \pm A_6 e^{-4\gk\tau}\tzz(u)
  \end{equation} are sub- $(\tz_-)$ and super- $(\tz_+)$ solutions in the region
  \begin{equation}
    1+B_5 e^{-\mu_k\tau}\leq u \leq B_5^{-1} e^{\tau/2}.
  \end{equation}
\end{lemma}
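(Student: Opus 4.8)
We sketch the argument, which runs closely parallel to that of Lemma~\ref{lem:intermediate-barrier} (and, as already flagged, to the tip construction): the plan is to replace the factor $1-u^2$ appropriate to $0<u<1$ by the factor $u^2-1$ appropriate to $u>1$, and to replace the fixed interval $(0,1)$ by the $\tau$--dependent interval $1+B_5e^{-\mu_k\tau}\le u\le B_5^{-1}e^{\tau/2}$ (whose upper endpoint amounts to a uniform upper bound on $\psi$).  The key preliminary observation is that the leading term
\[
Z_1^{\rm out}(u):=A_5\,u^{-2}(u^2-1)^{1+2\gk}
\]
is exactly the analytic continuation of the formal profile~\eqref{eq:Z1} across $u=1$; a direct computation confirms that it solves the homogeneous \textsc{ode} $\tfrac12(u^{-1}-u)(Z_1^{\rm out})'+(u^{-2}+2\gk)Z_1^{\rm out}=0$.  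Hence, applying the operator $\cD_u$ of~\eqref{eq:Du-definition} to $\tz_\pm=e^{-2\gk\tau}Z_1^{\rm out}(u)\pm A_6e^{-4\gk\tau}\tzz(u)$ annihilates every $e^{-2\gk\tau}$ term and reproduces the structure of~\eqref{eq:z-substituted-in-Du}: a linear inhomogeneous expression $A_6\bigl\{-\tfrac12(u^{-1}-u)\tzz'-(u^{-2}+4\gk)\tzz\bigr\}$, a $\tau$--independent term $-\tfrac1{2(n-1)}\cQ_u[Z_1^{\rm out}]$, and two remainders, of orders $e^{-2\gk\tau}$ and $e^{-4\gk\tau}$, built from $\hcQ_u[Z_1^{\rm out},\tzz]$ and $\cQ_u[\tzz]$.

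Following~\eqref{eq:zetap}, I would take $\tzz:(1,\infty)\to\R$ to be a solution of
\[
-\tfrac12(u^{-1}-u)\,\tzz'-(4\gk+u^{-2})\,\tzz=u^{-2}(u^2-1)^{4\gk}.
\]
The power of $u$ on the right differs from that in~\eqref{eq:zetap} because the degeneracy of $\cQ_u[Z_1^{\rm out}]$ now lives at $u=\infty$ rather than at $u=0$: one verifies $|\cQ_u[Z_1^{\rm out}]|\ls A_5^2\,u^{-2}(u^2-1)^{4\gk}$ on $(1,\infty)$, with the two regimes being $(u^2-1)^{4\gk}$ as $u\searrow1$ and $u^{8\gk-2}$ as $u\to\infty$.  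As in~\eqref{eq:zetap-regsing}, $u=1$ is a regular singular point, and a Frobenius or variation-of-constants argument gives, for \emph{every} solution, $\tzz(u)=-(u^2-1)^{4\gk}+\cO\bigl((u^2-1)^{1+4\gk}\log(u^2-1)\bigr)$ as $u\searrow1$, since the homogeneous solution $u^{-2}(u^2-1)^{1+4\gk}$ (compare~\eqref{eq:Zm-homogeneous} with $m=2$) is subordinate there.  At $u=\infty$, by contrast, that same homogeneous solution grows like $u^{8\gk}$ and \emph{dominates} any particular solution, so there one only records the uniform bounds $\tzz(u)=\cO(u^{8\gk})$, $\tzz'(u)=\cO(u^{8\gk-1})$, $\tzz''(u)=\cO(u^{8\gk-2})$.

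With $\tzz$ so chosen, the linear term equals $A_6\,u^{-2}(u^2-1)^{4\gk}$, so the first two terms combine to at least $c\,A_6\,u^{-2}(u^2-1)^{4\gk}$ once $A_6$ is large relative to $A_5$ (namely $A_6\gtrsim A_5^2$, playing the role of the threshold $\Amin$ in Lemma~\ref{lem:intermediate-barrier}); this is how $A_6$ is fixed in terms of $A_5$.  It remains to absorb the two remainders into this positive main term.  Inserting the endpoint asymptotics, the $\hcQ_u[Z_1^{\rm out},\tzz]$ remainder has size $e^{-2\gk\tau}(u^2-1)^{6\gk-1}$ near $u=1$ and $e^{-2\gk\tau}u^{12\gk-2}$ near $\infty$, while the $\cQ_u[\tzz]$ remainder has size $e^{-4\gk\tau}(u^2-1)^{8\gk-2}$ near $u=1$ and $e^{-4\gk\tau}u^{16\gk-2}$ near $\infty$.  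Since $2\gk-1=-2/k<0$ and $4\gk-2=-4/k<0$, comparing powers with the main term $A_6u^{-2}(u^2-1)^{4\gk}$ shows that domination holds exactly when $u^2-1\gtrsim e^{-\mu_k\tau}$ --- which forces the lower cutoff $u\ge 1+B_5e^{-\mu_k\tau}$, the mirror image of the upper cutoff of Lemma~\ref{lem:intermediate-barrier} --- and, at the other end, exactly when $u\ls e^{\tau/2}$ --- which forces the upper cutoff $u\le B_5^{-1}e^{\tau/2}$.  On any bulk interval with $u$ bounded away from $1$ and from $\infty$, the remainders are automatically beaten once $\tau$ is large.  Choosing $B_5$ small and $\tau$ past a threshold (both depending only on $A_5$) then makes $\cD_u[\tz_+]\ge0$ throughout the stated region; the subsolution $\tz_-$ follows from the identical computation with the sign of $A_6$ reversed, so that the $\tau$--independent term again dominates and $\cD_u[\tz_-]\le0$.  (The subsequent ordering of $\tz_\pm$ into genuine barriers and their patching to the parabolic and cap regions would follow the pattern of Lemma~\ref{OrderIntermediate}.)

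The step I expect to be the main obstacle is the uniformity of these estimates as $u\to\infty$.  In the intermediate region the interval $(0,1)$ is fixed and bounded, so matching powers at its endpoints is in effect a compactness argument; here the validity region is genuinely unbounded in $u$ as $\tau\to\infty$, and one must check that the asymptotic expansions invoked --- of $\bry$, of $Z_1^{\rm out}$, and especially of the selected $\tzz$ together with its first two derivatives --- all hold with constants independent of $\tau$, and that the homogeneous growth $u^{8\gk}$ of $\tzz$ is outpaced by the leading term for all $u$ up to $B_5^{-1}e^{\tau/2}$.  The exponent bookkeeping sketched above, in which $2\gk-1$ and $4\gk-2$ are both negative, is precisely what makes this go through and what determines the value of the upper cutoff.
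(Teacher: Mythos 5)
Your proposal is correct and follows essentially the same route as the paper's proof: the same ansatz $\tz_\pm=A_5e^{-2\gk\tau}\tz_1\pm A_6e^{-4\gk\tau}\tzz$ with $\tz_1=u^{-2}(u^2-1)^{1+2\gk}$ solving the homogeneous equation, the same choice of $\tzz$ as a solution of the linear \textsc{ode} forced by $|\cQ_u[\tz_1]|$ (your right-hand side $u^{-2}(u^2-1)^{4\gk}$ differs from the paper's $(u-1)^{4\gk}u^{4\gk-2}$ only by a bounded factor), the same threshold $A_6\gtrsim A_5^2$, and the same exponent bookkeeping yielding the cutoffs $u-1\gtrsim e^{-\mu_k\tau}$ and $u\lesssim e^{\tau/2}$. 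Your separate near-$1$/near-$\infty$/bulk analysis is just an unpacked version of the paper's single smallness condition on $X=e^{-2\gk\tau}(u-1)^{2\gk-1}u^{2\gk+1}$, so no genuine difference or gap remains.
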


\begin{proof}
  We sketch the proof, because it is very similar to that of
  Lemma~\ref{lem:intermediate-barrier}.

  Let $\tz_1(u)=(u^2-1)^{1+2\gk}u^{-2}$.  Then for $1\leq u<\infty$,
  one estimates $|\tz_1|\leq C(u-1)^{2\gk+1}u^{2\gk-1}$, $|\tz_1'|\leq
  C(u-1)^{2\gk}u^{2\gk-1}$, and $|\tz_1''|\leq
  C(u-1)^{2\gk-1}u^{2\gk-1}$. Hence
  \[
  |\cQ _u[\tz_1]|\leq C (u-1)^{4\gk}u^{4\gk-2}.
  \]
  This motivates us to choose the function $\tzz$ to be a solution of
  the \textsc{ode}
  \[
  \frac12(u-u^{-1})\frac{\rd\tzz}{\rd
    u}-(4\gk+u^{-2})\tzz=(u-1)^{4\gk}u^{4\gk-2}.
  \]
  Its solutions are $\tzz=\tzz_p+Z_{2,{\rm hom}}$, where
  $\tzz_p$ is any particular solution, and $Z_{2,{\rm hom}}$ solves
  the associated homogeneous \textsc{ode}. We may thus choose a
  solution $\tzz$ such that $\tzz\sim(u-1)^{4\gk}$ (the behavior of
  $\tzz_p$) as $u\searrow1$ and $\tzz\sim u^{8\gk}$ (the behavior of
  $Z_{2,{\rm hom}}$) as $u\to\infty$.  Estimating that $|\tzz|\leq
  C(u-1)^{4\gk}u^{4\gk}$, $|\tzz'|\leq C(u-1)^{4\gk-1}u^{4\gk}$, and
  $|\tzz''|\leq C(u-1)^{4\gk-2}u^{4\gk}$, we observe that
  \[
  |\cQ _u[\tzz]|\leq C(u-1)^{8\gk-2}u^{8\gk},
  \]
  and
  \[
  |\hcQ_u {[\tz_1,\tzz]}|\leq C(u-1)^{6\gk-1}u^{6\gk-1}.
  \]

  With these estimates for $\tz_1(u)$ established, we write
  \[
  \tz_\pm=A_5 e^{-2\gk\tau}\tz_1(u) + A_6 e^{-4\gk\tau}\tzz(u),
  \]
  as above, and seek to determine $A_6$ (depending only on $A_5$) so that
  $\tz_+$ is a supersolution. Based on the estimates above, we have
  \[
  e^{4\gk\tau} \cD _u[\tz_+] \geq
  \big\{A_6-\frac{C}{2(n-1)}A_5^2\big\}(u-1)^{4\gk}u^{4\gk-2}
  +\cO(e^{-2\gk\tau}).
  \]
  This leads us to choose $A_6:=1+\frac{C}{2(n-1)}A_5^2$.  Then one
  obtains
  \[
  e^{4\gk\tau} \cD _u[\tz_+] \geq
  (u-1)^{4\gk}u^{4\gk-2}\big\{1-CA_5A_6X-C(A_6X)^2\big\},
  \]
  where $X:=e^{-2\gk\tau}(u-1)^{2\gk-1}u^{2\gk+1}$.  It follows that
  $\tz_+$ is a subsolution in any spacetime region where
  $CA_5(A_6X)+C(A_6X)^2<1$.

  For $u>1$ near $1$, we observe that there is a constant $B'$
  depending only $A_5$ such that $\tz_+$ is a supersolution if
  $(u-1)^{1-2\gk}\geq B' e^{-2\gk\tau}$, hence if $v^{2/k}\geq
  B'e^{-2\gk\tau}$.  It is easy to see that this is implied by the
  constraint $u\geq1+B_5e^{-\mu_k\tau}$ if $B_5$ is large enough.

  For $u\gg1$, we observe that there is a constant $B''$ depending
  only on $A_5$ such that $\tz_+$ is a supersolution if $B'' u\leq
  e^{\tau/2}$, hence if $B'' \psi\leq1$.

  Similar considerations show that $\tz_-$ is a subsolution.
\end{proof}

As in the other regions, by making suitable modifications to the constants,
one can ensure that the sub- and supersolutions of Lemma~\ref{OuterBarrierLemma}
are properly ordered upper and lower barriers in the outer region. To do so, we
specify constants $A_5^\pm=(1\pm\delta)c_k$, where $c_k$ is defined in \eqref{eq:c1-bk-relation},
and $0<\delta\ll1$ is the small constant fixed above.
We define $A_7:=\max\{A_6(A_5^-),A_6(A_5^+)\}$, and we set $B_6:=\max\{B_5(A_5^-),B_5(A_5^+)\}$.
Then we have the following:

\begin{lemma}
\label{OrderOuter}
There exist $B_7\geq B_6$ depending only on $b_k$ such that
if $\tzz$ is the function appearing in Lemma~\ref{OuterBarrierLemma}, then
\begin{equation}
    \tz_\pm=(1\pm\delta)c_k e^{-2\gk\tau}(u^2-1)^{1+2\gk}u^{-2}
    \pm A_7 e^{-4\gk\tau}\tzz(u)
\end{equation}
are a pair of properly ordered barriers in the region
\begin{equation}
    1+B_5 e^{-\mu_k\tau}\leq u \leq (B_7)^{-1} e^{\tau/2}.
\end{equation}
\end{lemma}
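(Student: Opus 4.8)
The plan is to mirror the proof of Lemma~\ref{OrderIntermediate}, noting that the outer region is in one respect simpler: the correction function $\tzz$ has the favorable sign near $u=1$, so the ordering of the barriers will require no constraint beyond $u>1$, in contrast to the cutoff $u\le1-(\mathrm{const})e^{-\mu_k\tau}$ forced in the intermediate region. First I would record two structural facts about Lemma~\ref{OuterBarrierLemma}. The function $\tzz$ there solves
\[
\frac12(u-u^{-1})\frac{\rd\tzz}{\rd u}-(4\gk+u^{-2})\tzz=(u-1)^{4\gk}u^{4\gk-2},
\]
an \textsc{ode} not involving $A_5$, so a single $\tzz$ serves in the formulas for both $\tz_+$ and $\tz_-$. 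Moreover, the solution selected there --- the one with $\tzz(u)\sim(u-1)^{4\gk}$ as $u\searrow1$ --- is strictly positive on $(1,\infty)$: it is positive just above $u=1$, and wherever $\tzz$ vanishes the equation gives $\tzz'=2(u-1)^{4\gk}u^{4\gk-2}/(u-u^{-1})>0$, which rules out a transition from positive to negative values.

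With these in hand, the sub/supersolution property comes from re-examining the proof of Lemma~\ref{OuterBarrierLemma}: its conclusion persists if the amplitude $A_6$ is replaced by any larger constant, at the cost of enlarging $B_5$. Indeed, replacing $A_6$ by the common value $A_7=\max\{A_6(A_5^-),A_6(A_5^+)\}$, one still has $A_7-\frac{C}{2(n-1)}(A_5^\pm)^2\ge1>0$, so the $\tau$-independent term in $e^{4\gk\tau}\cD_u[\tz_\pm]$ retains its sign; the remaining terms carry factors $A_5^\pm A_7$ and $A_7^2$ multiplying powers of $X=e^{-2\gk\tau}(u-1)^{2\gk-1}u^{2\gk+1}$, and are dominated once $X$ is small, i.e.\ once $u-1\ge(\mathrm{const})e^{-\mu_k\tau}$ for $u$ near $1$ and $u\le(\mathrm{const})^{-1}e^{\tau/2}$ for $u\gg1$, with constants depending only on $A_5^\pm$ and $A_7$, hence only on $b_k$. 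Taking $B_7\ge B_6$ large enough to absorb these constants makes $\tz_+$ a supersolution and $\tz_-$ a subsolution on the region displayed in the statement, for all sufficiently large $\tau$.

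For the ordering, since $A_5^\pm=(1\pm\delta)c_k$ and $(u^2-1)^{1+2\gk}u^{-2}>0$ for $u>1$, one has
\[
\tz_+-\tz_-=2\delta c_k\,e^{-2\gk\tau}(u^2-1)^{1+2\gk}u^{-2}+2A_7\,e^{-4\gk\tau}\tzz(u),
\]
which on the region in question (indeed on all of $u>1$) is a sum of two strictly positive terms, using $\delta,c_k,A_7>0$ and the positivity of $\tzz$. Hence $\tz_-<\tz_+$, so the barriers are properly ordered; together with the preceding paragraph, this proves the lemma with $B_7$ depending only on $b_k$.

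The one delicate point I anticipate is the bookkeeping in the second step: one must check that trading the region-specific amplitudes $A_6(A_5^\pm)$ for the single larger constant $A_7$ --- which inflates the amplitude-quadratic error terms in $e^{4\gk\tau}\cD_u[\tz_\pm]$ --- costs only a finite enlargement of $B_5$ to $B_7$ without altering the shape of the admissible region, and that the resulting $B_7$ still depends only on $b_k$. Everything else is a direct transcription of the intermediate-region argument, with the relevant sign now working in our favor.
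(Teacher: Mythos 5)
Your reduction to a single correction function $\tzz$ and your bookkeeping for the common amplitude $A_7$ are fine, but the heart of your ordering argument rests on a false sign claim: $\tzz$ is \emph{not} positive just above $u=1$. Every solution of the defining \textsc{ode}
\[
\frac12(u-u^{-1})\frac{\rd\tzz}{\rd u}-(4\gk+u^{-2})\tzz=(u-1)^{4\gk}u^{4\gk-2}
\]
satisfies $\tzz(u)=-(1+o(1))(u-1)^{4\gk}$ as $u\searrow1$: with $w=u-1$ the equation reduces to $w\,\tzz_w-(1+4\gk)\tzz=(1+o(1))w^{4\gk}$, whose particular solution is $-w^{4\gk}$, while the homogeneous solution $w^{1+4\gk}$ is of higher order, so no choice of solution escapes the negative sign. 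This is the exact analogue of \eqref{eq:zp-asymptotics}, where $\zp\approx-(1-u^2)^{4\gk}<0$ as $u\nearrow1$. Your no-first-zero argument is correct as far as it goes, but its hypothesis fails, and with it your conclusion that the ordering ``requires no constraint beyond $u>1$'': for $0<u-1\ll e^{-\mu_k\tau}$ the correction $A_7e^{-4\gk\tau}|\tzz|\approx A_7e^{-4\gk\tau}(u-1)^{4\gk}$ dominates the leading gap $2\delta c_k e^{-2\gk\tau}(u^2-1)^{1+2\gk}u^{-2}\approx C e^{-2\gk\tau}(u-1)^{1+2\gk}$, and $\tz_+-\tz_-$ is in fact negative there.

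What saves the lemma is precisely the lower cutoff in its statement: arguing as in Lemma~\ref{OrderIntermediate}, one bounds $|\tzz|\leq C(u-1)^{4\gk}u^{4\gk}$ and checks that the leading gap dominates once $(u-1)^{1-2\gk}\gtrsim(A_7/\delta c_k)e^{-2\gk\tau}$, i.e.\ $u-1\gtrsim C(b_k)\,e^{-\mu_k\tau}$, which is what the constraint $u\geq1+B_5e^{-\mu_k\tau}$ (with constants depending only on $b_k$) supplies. Likewise at the far end you should not lean on a sign for $\tzz$: since $\tzz\sim u^{8\gk}$ there, one has $A_7e^{-4\gk\tau}|\tzz|\sim\bigl(e^{-2\gk\tau}u^{4\gk}\bigr)^2$, which is absorbed by the leading gap $\sim e^{-2\gk\tau}u^{4\gk}$ exactly when $B_7e^{-\tau/2}u\leq1$ with $B_7$ large; this magnitude comparison, not merely the sub/supersolution property, is the role the upper constraint plays in the paper's proof of the ordering. (In fairness, the paper's own sketch also asserts $\tzz\sim(u-1)^{4\gk}>0$ near $u=1$; the computation above shows that sign is wrong, but the region stated in the lemma makes the ordering go through by the magnitude argument just described, so your proposal needs that argument rather than global positivity of $\tzz$.)
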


\begin{proof}
We sketch the proof, which is similar to that of Lemma~\ref{OrderIntermediate}, but somewhat simpler.

As $u\searrow1$, one calculates that $\tzz\sim(u-1)^{4\gk}>0$. And as $u\rightarrow\infty$,
one has $e^{-2\gk\tau}(u^2-1)^{1+2\gk}u^{-2}\sim e^{-2\gk\tau}u^{4\gk}$
and $e^{-4\gk\tau}\tzz\sim \big(e^{-2\gk\tau}u^{4\gk}\big)^2$.
It follows that the barriers are properly ordered provided that
$B_7 e^{-\tau/2}u\leq1$ for some $B_7\geq B_6$ sufficiently large.
\end{proof}

\section{Analysis of the parabolic region}\label{ExitStrategy}

Ricci flow solutions cannot escape from the portion of
$\partial\Xi_{\ve}$ that is associated with the tip, intermediate,
and outer regions, and is explicitly constructed in
Section~\ref{PuttingUpBarriers} using upper and lower barriers.
Solutions can, however, escape from the portion of $\partial\Xi_{\ve}$
that is associated with the parabolic region, and is defined by the
inequalities listed in \eqref{vkminusL2}--\eqref{DefineParabolicPW}.
In this section, after establishing a collection of key estimates in
Lemmas~\ref{FirstDerivativeBound}--\ref{StrapYourBoots}, we derive the
exit and entrapment results, Lemmas~\ref{ExitLemma}--\ref{PWentrapment},
that relate to these inequalities and control $\Xi_\ve$ in the parabolic
region. As noted in Section~\ref{Outline}, these results play a crucial
role in our proof of our main theorem.

\subsection{The parabolic-intermediate interface}
We recall from equation~\eqref{eq:z-expansion-near-neck} that in the
parabolic region, the formal solution indexed by $n$, $k$, and $b_k$ satisfies
\[
z\eh 2(n-1)k^2(-b_k)^{2/k}e^{-2\gk\tau}(1-u)^{1+2\gk}
\]
for $|\sigma|\gg1$; here we use the identity $2-2/k=1+2\gk$. As
shown in Lemmas~\ref{lem:intermediate-barrier}--\ref{PastingLemma}, we
can construct properly ordered and patched barriers that encase this
formal solution and are valid for $\tau\geq\tau_6$ and
$u\leq1-\hb e^{-\mu_k\tau}$; i.e., for $e^{\mu_k\tau}|v|\geq\hb$.
Here $\tau_6$ and $\hb:=B_3(2A_3/c_k)^{k/2}$ depend only on $b_k$.
These barriers define $\Xi_\ve$ in the intermediate and tip regions.
We now use them to derive information about the parabolic-intermediate
interface for solutions belonging to $\Xi_\ve$. This interface corresponds
to $|\sigma|$ sufficiently large, where ``sufficiently large'' is made precise
below.

\begin{lemma}
  \label{FirstDerivativeBound}
  There exist constants $A_8^\pm$, $\hb$ and $\tau_7\geq\tau_6$
  depending only on $b_k$ such that every solution in $\Xi_\ve$
  satisfies
  \begin{equation}
    \label{derivbd}
    \frac{\sqrt{A_8^-}}{k}
    \leq\pl_\sigma\big\{(-e^{\mu_k\tau}v)^{\frac1k}\big\}
    \leq\frac{\sqrt{A_8^+}}{k}
  \end{equation}
  for $\hb\leq e^{\mu_k\tau}|v|\leq\frac12 e^{\mu_k\tau}$ and
  $\tau\geq\tau_7$.
\end{lemma}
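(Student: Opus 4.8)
The plan is to read off \eqref{derivbd} from the intermediate-region barriers of Lemma~\ref{OrderIntermediate} together with the identity $z=2(n-1)u_\sigma^2$ and the chain rule. Since the singularity forms at the north pole, $v=u-1<0$ on the region in question, so the hypothesis $\hb\le e^{\mu_k\tau}|v|\le\tfrac12 e^{\mu_k\tau}$ says precisely that $\hb e^{-\mu_k\tau}\le 1-u\le\tfrac12$. For $\tau$ large (depending only on $b_k$) the lower endpoint $B_2\sqrt{2A_3/c_k}\,e^{-\gk\tau}$ of the validity domain \eqref{eq:intermediate-barrier-valid} lies below $\tfrac12$, so this slice is contained in the intermediate region (and safely away from the tip, where $u\to0$); there $u_\sigma<0$ and $z(u,\tau)$ is well defined, and a solution $g\in\Xi_\ve$ obeys $z_-(u,\tau)\le z\le z_+(u,\tau)$ for the ordered barriers of Lemma~\ref{OrderIntermediate}. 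I will take the threshold $\hb$ to be at least $B_3(2A_3/c_k)^{k/2}$, enlarging it below if necessary; this is harmless because the barriers remain valid on the smaller set where $1-u\ge \hb e^{-\mu_k\tau}$.

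\textbf{Bounding the barriers.} First I would bound $z_\pm$ on this slice. On $\tfrac12\le u<1$ the endpoint asymptotics \eqref{eq:zp-asymptotics} give $|\zp(u)|\le C(1-u^2)^{4\gk}$, while elementarily $c_1(1-u)^{1+2\gk}\le u^{-2}(1-u^2)^{1+2\gk}\le c_2(1-u)^{1+2\gk}$, with $C,c_1,c_2>0$ depending only on $b_k$. The crucial observation is that the $e^{-4\gk\tau}$--remainder in $z_\pm$ is uniformly dominated by the $e^{-2\gk\tau}$--leading term on the slice: since $1+2\gk-4\gk=2/k$, one has $(1-u^2)^{4\gk}(1-u)^{-1-2\gk}=(1+u)^{4\gk}(1-u)^{-2/k}$, and because $1-u\ge\hb e^{-\mu_k\tau}$ and $\mu_k/k=\gk$ this gives $(1-u)^{-2/k}e^{-2\gk\tau}\le\hb^{-2/k}$; combining with the bounds above,
\[
A_3 e^{-4\gk\tau}|\zp(u)|\le C' A_3\,\hb^{-2/k}\, e^{-2\gk\tau}u^{-2}(1-u^2)^{1+2\gk},
\]
with $C'$ depending only on $b_k$. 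Choosing $\hb$ large enough, depending only on $b_k$, that $C'A_3\hb^{-2/k}\le\tfrac12(1-\delta)c_k$, the remainder is at most half the leading term of $z_\pm$ on the slice, and hence for $\tau\ge\tau_7$,
\[
A_8^-\,e^{-2\gk\tau}(1-u)^{1+2\gk}\le z\le A_8^+\,e^{-2\gk\tau}(1-u)^{1+2\gk},
\]
with $A_8^\pm>0$ depending only on $b_k$ (up to the factor $2(n-1)$, suitable multiples of $c_kc_1$ and $c_kc_2$).

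\textbf{Conclusion and the main obstacle.} To finish, since $u_\sigma<0$ one has $u_\sigma=-\sqrt{z/(2(n-1))}$, and because $\mu_k=k\gk$,
\[
\pl_\sigma\big\{(-e^{\mu_k\tau}v)^{\frac1k}\big\}=\pl_\sigma\big\{e^{\gk\tau}(1-u)^{\frac1k}\big\}=\frac{e^{\gk\tau}}{k}(1-u)^{\frac1k-1}\sqrt{\frac{z}{2(n-1)}}.
\]
Inserting the two-sided bound on $z$, the $e^{\gk\tau}$ prefactor cancels the $e^{-\gk\tau}$ inside the square root, and the powers of $(1-u)$ cancel by the identity $\tfrac1k-1+\tfrac{1+2\gk}{2}=0$, so $\pl_\sigma\{(-e^{\mu_k\tau}v)^{1/k}\}$ is pinched between $\tfrac1k\sqrt{A_8^-/(2(n-1))}$ and $\tfrac1k\sqrt{A_8^+/(2(n-1))}$; absorbing $2(n-1)$ into the constants yields \eqref{derivbd}. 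The one point needing care is the bounding step: because $1+2\gk>4\gk$, the remainder in $z_\pm$ overtakes the leading term as $u\nearrow1$, but only past $u=1-\hb e^{-\mu_k\tau}$, so taking the threshold $\hb$ large — legitimate, as $\hb$ is one of the constants this lemma asserts — forces the remainder-to-leading ratio to be $O(\hb^{-2/k})$ across the whole slice. It is essential that $A_3,B_3,c_k,\delta$ and the auxiliary constants are all fixed in Section~\ref{PuttingUpBarriers} and depend only on $n$ and $b_k$, not on $\ve$ or $W$.
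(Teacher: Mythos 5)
Your argument is essentially the paper's proof: both use the ordered intermediate barriers of Lemma~\ref{OrderIntermediate} together with the bound $|\zp|\leq C(1-u)^{4\gk}$ on $\tfrac12\leq u<1$ and the relation $2\gk=\mu_k(1-2\gk)$ (your observation $(1-u)^{-2/k}e^{-2\gk\tau}\leq\hb^{-2/k}$ on the slice) to pinch $z$ between constant multiples of $e^{-2\gk\tau}u^{-2}(1-u^2)^{1+2\gk}$, and then the same exponent cancellation in $\pl_\sigma\{(-e^{\mu_k\tau}v)^{1/k}\}$ via $z=2(n-1)u_\sigma^2$. The only cosmetic difference is that you enlarge $\hb$ beyond $B_3(2A_3/c_k)^{k/2}$ to make the remainder-to-leading ratio small, whereas the paper keeps that value of $\hb$ and absorbs the issue into the constants $\hapm$; both are legitimate since $\hb$ depends only on $b_k$.
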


\begin{proof}
  By using \eqref{eq:zp-asymptotics} to bound
  $|\zeta|\leq C(1-u)^{4\gk}$  for
  $\frac12\leq u\leq1$, we see from
  Lemmas~\ref{lem:intermediate-barrier}--\ref{OrderIntermediate} and the fact that
  $2\gk=\mu_k(1-2\gk)$ that there exist constants $\ham,\hap$ such that any solution
  in $\Xi_\ve$ satisfies
  \begin{equation*}
    \ham e^{-2\gk\tau}u^{-2} (1-u^2)^{1+2\gk}
    \leq z\leq
    \hap e^{-2\gk\tau}u^{-2} (1-u^2)^{1+2\gk}
  \end{equation*}
  for $\frac12\leq u\leq 1-\hb e^{-\mu_k\tau}$ and
  $\tau\geq\tau_7\geq\tau_6$, where $\hapm$, $\hb$, and
  $\tau_7$ depend only on $b_k$. This implies that there exist
  constants $A_8^\pm$ depending only on $b_k$ such that one can
  bound $\big(\pl_\sigma(v^{\frac 1k})\big)^2=k^{-2}(1-u)^{\frac
    2k-2}z$ in the same space-time region as follows:
  \[
  \frac{A_8^-}{k^2}e^{-2\gk\tau} \leq\big(\pl_\sigma(v^{\frac
    1k})\big)^2 \leq\frac{A_8^+}{k^2}e^{-2\gk\tau}.
  \]
  Recalling that $-v=1-u>0$ and $\mu_k/k=\gk$, we obtain \eqref{derivbd}.
\end{proof}

We now show that Lemma~\ref{FirstDerivativeBound} gives pointwise
control of $e^{\mu_k\tau}|v|$ for large $|\sigma|$, specifically in
the parabolic-intermediate intersection. Below, we use properties of
the operator $\cA $ to provide pointwise control of this quantity for
smaller $|\sigma|$.  Both of these arguments are facilitated by a
judicious choice of the parameter $P=2(B/|b_k|)^{1/k}$ defined in
equation~\eqref{DefineP}.  Noting that there is a universal constant
$C_k$ with $|h_k(\sigma)-\sigma^k|\leq C_k|\sigma|^{k-2}$ for
$|\sigma|\geq1$, we choose $B\geq\hb$ large enough, depending only on
$b_k$, so that
\begin{equation}
  P\geq\frac{10}{3}\sqrt{C_k}.
  \label{ChooseP}
\end{equation}

\begin{lemma}
  \label{BrandenburgerTor}
  If $P=2(B/|b_k|)^{1/k}$ is chosen as in \eqref{ChooseP}, depending only on $b_k$, then
  for all sufficiently small $\ve$, depending on $\params$, every solution in $\Xi_\ve$ satisfies
  \[
  e^{\mu_k\tau}\big|v\big(\frac25 P,\tau\big)\big|<B,
  \qquad\text{and}\qquad
  e^{\mu_k\tau}\big|v\big(\sigma,\tau\big)\big|>B
  \quad\text{if}\quad\frac35 P\leq|\sigma|\leq P.
  \]
\end{lemma}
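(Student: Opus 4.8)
The plan is to deduce from the four conditions \eqref{vkminusL2}--\eqref{DefineParabolicPW} defining $\Xi_\ve$ in the parabolic region that, for every solution in $\Xi_\ve$, the rescaled perturbation $e^{\mu_k\tau}v(\cdot,\tau)$ is uniformly close on $\{|\sigma|\le P\}$ to the single Hermite term $b_kh_k$, and then to evaluate this polynomial at $\sigma=\tfrac25P$ and on $\tfrac35P\le|\sigma|\le P$. The choice \eqref{ChooseP} of $P$ is designed precisely so that the resulting numerical inequalities are strict with a definite margin, which then absorbs the error incurred in replacing $e^{\mu_k\tau}v$ by $b_kh_k$.

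First I would note that once $\tau$ is large enough that $e^{\gk\tau/5}\ge P$ --- which is the case for all solutions in $\Xi_\ve$, since these exist only for $\tau\ge\bt$ --- one has $\vh=v$ on $\{|\sigma|\le P\}$ by \eqref{Support}. Writing $e^{\mu_k\tau}\vh=e^{\mu_k\tau}(v_{k-}+v_k+v_{k+})$ and applying the triangle inequality to the three $L^2$ bounds \eqref{vkminusL2}--\eqref{vkplusL2} gives $\|e^{\mu_k\tau}v-b_kh_k\nh\le 3\ve$. Since the weight $e^{-\sigma^2/4}$ is bounded below by $e^{-P^2/4}$ on $[-P,P]$, this yields $\|e^{\mu_k\tau}v-b_kh_k\|_{L^2([-P,P],\,\rd\sigma)}\le C_4\ve$ with $C_4$ depending only on $b_k$. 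At the same time, the pointwise condition \eqref{DefineParabolicPW} gives $|e^{\mu_k\tau}v|+|e^{\mu_k\tau}v_\sigma|\le W$ on $[-P,P]$, so, using that $h_k$ and $h_k'$ are bounded there by constants depending only on $b_k$, the function $g:=e^{\mu_k\tau}v-b_kh_k$ is Lipschitz on $[-P,P]$ with a constant $C_6$ depending only on $\params$.

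Next comes a standard interpolation between $L^2$ and Lipschitz control: if $|g(\sigma_0)|=M$ for some $\sigma_0\in[-P,P]$, then $|g|\ge M/2$ on a subinterval of $[-P,P]$ of length $M/(2C_6)$ abutting $\sigma_0$, so $C_4^2\ve^2\ge\int_{-P}^Pg^2\ge M^3/(8C_6)$, hence $\sup_{|\sigma|\le P}|g|\le C_7\ve^{2/3}$ with $C_7$ depending only on $\params$. Thus every solution in $\Xi_\ve$ satisfies $|e^{\mu_k\tau}v(\sigma,\tau)-b_kh_k(\sigma)|\le C_7\ve^{2/3}$ for all $|\sigma|\le P$.

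Finally I would estimate $b_kh_k$ at the relevant points using $|h_k(\sigma)-\sigma^k|\le C_k|\sigma|^{k-2}$ for $|\sigma|\ge1$, the identity $|b_k|P^k=2^kB$, and the bound $C_k/P^2\le\tfrac9{100}$ furnished by \eqref{ChooseP}. At $\sigma=\tfrac25P$ one finds $|b_kh_k(\tfrac25P)|\le(\tfrac45)^{k-2}B\le\tfrac45B$. For $\tfrac35P\le|\sigma|\le P$ one has $|\sigma|\ge2\sqrt{C_k}$, whence $|\sigma|^2-C_k\ge\tfrac34|\sigma|^2$ and so $|h_k(\sigma)|\ge\tfrac34|\sigma|^k$, giving $|b_kh_k(\sigma)|\ge\tfrac34(\tfrac65)^kB\ge\tfrac34(\tfrac65)^3B>B$. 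Choosing $\ve$ small enough, depending on $\params$, that $C_7\ve^{2/3}<\tfrac15B$, the claimed strict inequalities follow from $e^{\mu_k\tau}|v(\tfrac25P,\tau)|\le|b_kh_k(\tfrac25P)|+C_7\ve^{2/3}<B$ and $e^{\mu_k\tau}|v(\sigma,\tau)|\ge|b_kh_k(\sigma)|-C_7\ve^{2/3}>B$. The main obstacle is really just the passage from the $L^2$ bounds to genuinely pointwise control, which is possible only because the $C^1$ bound \eqref{DefineParabolicPW} is built into the definition of $\Xi_\ve$; the rest is bookkeeping to check that the constants produced by \eqref{ChooseP} leave slack uniform in $n$ and in $k\ge3$.
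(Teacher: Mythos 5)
Your argument is correct and is essentially the paper's own proof: the paper likewise combines the bound $\|e^{\mu_k\tau}\vh-b_kh_k\nh\leq3\ve$ coming from \eqref{vkminusL2}--\eqref{vkplusL2} with the $C^1$ bound \eqref{DefineParabolicPW} to get $\|e^{\mu_k\tau}v-b_kh_k\|_{L^\infty(\cP)}\leq C(1+W^{1/3})\ve^{2/3}$, and then uses exactly your polynomial estimates for $|b_kh_k|$ at $\sigma=\frac25P$ and on $\frac35P\leq|\sigma|\leq P$ furnished by \eqref{ChooseP}. The only difference is cosmetic: the paper cites a Gagliardo--Nirenberg interpolation inequality for the $L^2$-plus-Lipschitz-to-$L^\infty$ step, which you instead prove by the elementary Chebyshev/Lipschitz argument (where you should just note that the interval abutting $\sigma_0$ may have to be truncated at $\pm P$, in which case the resulting bound is only better).
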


\begin{proof}
  We first show that the bounds are true for the formal solution.  Our
  choice of $P$ ensures that
  \begin{equation}
    |b_k|\,h_k\big(\frac25 P\big)
    \leq\left(\frac45\right)^k B\left\{1+C_k\left(\frac54\right)^2\left(\frac 2P\right)^2\right\}
    \leq\left(\frac45\right)^{k-2}B \leq\frac45 B.
    \label{BelowB}
  \end{equation}
  On the other hand, if $|\sigma|\geq\frac35 P$, then
  \[
  |b_k|h_k(\sigma)
  \geq|b_k||\sigma|^k\left\{1-C_k|\sigma|^{-2}\right\}
  \geq\frac34|b_k||\sigma|^k.
  \]
  In particular,
  \[
  |b_k|\,h_k\big(\frac35 P\big) >\left(\frac65\right)^{k-2}B
  \geq\frac65 B.
  \]

  Now because $\|\cdot\|_{L^2(\cP ; \Dh)}\leq\|\cdot\nh$, one may
  apply a Gagliardo--Nirenberg interpolation inequality (e.g., see
  p.~125 of \cite{Nirenberg59}) in the compact parabolic region $\cP $
  to see that
  \begin{multline*}
    \|e^{\mu_k\tau}v-b_k h_k\|_{L^\infty(\cP )}\\
    \leq C\left\{ \|\pl_\sigma(e^{\mu_k\tau}v-b_k h_k)\|_{L^\infty(\cP
        )}^{1/3} \|e^{\mu_k\tau}v-b_k
      h_k\nh^{2/3}+\|e^{\mu_k\tau}v-b_k h_k\nh \right\}.
  \end{multline*}
  Conditions~\eqref{vkminusL2}--\eqref{vkplusL2} imply that
  $\|e^{\mu_k\tau}v-b_k h_k\nh\leq3\ve$ for solutions in $\Xi_\ve$.
  Hence by condition~\eqref{DefineParabolicPW}, one has
  \[
  \|e^{\mu_k\tau}v-b_k h_k\|_{L^\infty(\cP )}\leq
  C(1+W^{1/3})\ve^{2/3}.
  \]
  So the result holds for all sufficiently small $\ve$, depending on $\params$,
  where the constant $W$ is chosen in Lemma~\ref{PWentrapment} below.
\end{proof}

\begin{remark}
  Because $v$ is continuous, it follows that there exists
  $\sigma_B(\tau)\in\left(\frac25 P,\frac35 P\right)$ such that
  $e^{\mu_k\tau}v(\sigma_B(\tau),\tau)=-B$.
\end{remark}

Hereafter, we assume that $\ve=\ve(b_k,W)$ is sufficiently small, as
indicated in Lemma~\ref{BrandenburgerTor}. One then has the following
result, anticipated in Section~\ref{Outline}:

\begin{lemma}
  \label{k-bound}
  There exist $A^\pm=A_8^\pm$, $\delta_1$, and $\tau_8\geq\tau_7$ depending
  only on $b_k$ such that for $\tau\geq\tau_8$, every solution in $\Xi_\ve$ satisfies
  \[
  \frac{\sqrt{A^-}}{k}
  \leq\pl_\sigma\big\{(-e^{\mu_k\tau}v)^{\frac1k}\big\}
  \leq\frac{\sqrt{A^+}}{k}
  \]
  in the parabolic-intermediate interface $|\sigma|\in\left(\frac35
    P,\delta_1e^{\gk\tau}\right)$, where $P=2(B/|b_k|)^{1/k}$.
\end{lemma}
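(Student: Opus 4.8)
The statement of Lemma~\ref{k-bound} is essentially that the derivative bound of Lemma~\ref{FirstDerivativeBound}, which a priori holds only in the ``annulus'' $\hb\le e^{\mu_k\tau}|v|\le\frac12 e^{\mu_k\tau}$, can be propagated inward to the fixed radius $|\sigma|=\tfrac35 P$ and outward to $|\sigma|=\delta_1 e^{\gk\tau}$. The idea is that the two endpoints $e^{\mu_k\tau}|v|=\hb$ and $e^{\mu_k\tau}|v|=\tfrac12 e^{\mu_k\tau}$ correspond, under the derivative control, to definite ranges of $|\sigma|$: a lower threshold $\sigma\sim(\hb/|b_k|)^{1/k}$ and an upper one $\sigma\sim\mathrm{const}\cdot e^{\gk\tau}$. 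So the real content is a \emph{bootstrap}: use Lemma~\ref{BrandenburgerTor} to pin down $v$ at the anchor point $\sigma_B(\tau)\in(\tfrac25 P,\tfrac35 P)$ where $e^{\mu_k\tau}v=-B$ (see the Remark), then integrate the derivative inequality \eqref{derivbd} outward from there.

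First I would fix the anchor: by Lemma~\ref{BrandenburgerTor} and the Remark, there is $\sigma_B(\tau)\in(\tfrac25 P,\tfrac35 P)$ with $(-e^{\mu_k\tau}v)^{1/k}=B^{1/k}$ at $\sigma=\sigma_B(\tau)$, and $e^{\mu_k\tau}|v|>B\ge\hb$ throughout $\tfrac35 P\le|\sigma|\le P$. Since $B\ge\hb$ was arranged in \eqref{ChooseP}, the hypothesis $e^{\mu_k\tau}|v|\ge\hb$ of Lemma~\ref{FirstDerivativeBound} holds at $\sigma_B(\tau)$ and on an interval extending outward from it, as long as we also stay below the upper cutoff $e^{\mu_k\tau}|v|\le\frac12 e^{\mu_k\tau}$. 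On that interval, \eqref{derivbd} gives
\[
\frac{\sqrt{A_8^-}}{k}\le\pl_\sigma\big\{(-e^{\mu_k\tau}v)^{1/k}\big\}\le\frac{\sqrt{A_8^+}}{k},
\]
and integrating from $\sigma_B(\tau)$ yields the two-sided bound
\[
B^{1/k}+\frac{\sqrt{A_8^-}}{k}(\sigma-\sigma_B)\le(-e^{\mu_k\tau}v)^{1/k}\le B^{1/k}+\frac{\sqrt{A_8^+}}{k}(\sigma-\sigma_B)
\]
for $\sigma\ge\sigma_B(\tau)$. From the upper estimate, $(-e^{\mu_k\tau}v)^{1/k}$ reaches the cutoff value $(\tfrac12 e^{\mu_k\tau})^{1/k}=2^{-1/k}e^{\gk\tau}$ only once $\sigma\gtrsim\delta_1 e^{\gk\tau}$ for a suitable small $\delta_1$ depending only on $b_k$ (here one uses $\mu_k/k=\gk$); conversely the lower estimate, together with $\sigma_B\le\tfrac35 P$, forces $(-e^{\mu_k\tau}v)^{1/k}$ to exceed $\hb^{1/k}$ on all of $\tfrac35 P\le\sigma\le\delta_1 e^{\gk\tau}$. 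This is the standard continuity/open-closed argument: the set of $\sigma$ on which both the lower and upper hypotheses of Lemma~\ref{FirstDerivativeBound} are satisfied is relatively open and closed in $[\tfrac35 P,\delta_1 e^{\gk\tau}]$ once $\tau\ge\tau_8$ is large enough, hence is all of it, so \eqref{derivbd} holds throughout. The identical argument with $\sigma\le-\sigma_B(\tau)$ (using the evenness/oddness structure of $h_k$, i.e.\ the second half of the Remark applied at the reflected point) handles $\sigma\in(-\delta_1 e^{\gk\tau},-\tfrac35 P)$. Setting $A^\pm=A_8^\pm$ and taking $\tau_8\ge\tau_7$ large enough that $\delta_1 e^{\gk\tau_8}$ comfortably exceeds $P$ completes the proof.

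The main obstacle is bookkeeping around the two cutoffs rather than any genuine analytic difficulty: one must check that the derivative bound \eqref{derivbd}, when integrated, does not let $e^{\mu_k\tau}|v|$ escape the window $[\hb,\tfrac12 e^{\mu_k\tau}]$ prematurely, which requires choosing $\delta_1$ small enough (so the upper cutoff is not hit before $\sigma\approx\delta_1 e^{\gk\tau}$) while keeping $\sigma_B(\tau)\le\tfrac35 P$ bounded (so the lower threshold $\hb$ is cleared immediately past $\tfrac35 P$). Both are arranged by elementary inequalities using only the constants $B,\hb,A_8^\pm$ from Lemmas~\ref{FirstDerivativeBound}--\ref{BrandenburgerTor}, all of which depend only on $b_k$, so the output constants $A^\pm,\delta_1,\tau_8$ inherit that dependence.
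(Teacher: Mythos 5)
Your argument is correct and follows essentially the same route as the paper: anchor the value of $e^{\mu_k\tau}|v|$ near $|\sigma|=\tfrac35 P$ via Lemma~\ref{BrandenburgerTor}, then integrate the derivative bounds of Lemma~\ref{FirstDerivativeBound} outward, choosing $\delta_1$ (using $\mu_k/k=\gk$) and $\tau_8$ with $\delta_1 e^{\gk\tau_8}\geq 2P$ so that the window $\hb\leq e^{\mu_k\tau}|v|\leq\tfrac12 e^{\mu_k\tau}$ is not exited before $|\sigma|=\delta_1 e^{\gk\tau}$. The paper's proof is terser (it integrates from $\tfrac35 P$ and leaves the continuation argument implicit), while you make the anchor at $\sigma_B(\tau)$ and the open--closed step explicit, but the substance is the same.
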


\begin{proof}
  Lemma~\ref{BrandenburgerTor} implies that any solution $v\in\Xi_\ve$
  obeys the derivative estimate of Lemma~\ref{FirstDerivativeBound}
  for all $|\sigma|\geq\frac35P$ such that $|v|\leq\frac12$. Further,
  so long as Lemma~\ref{FirstDerivativeBound} applies, one has
  \[
  (e^{\mu_k\tau}|v|)^{\frac{1}{k}}\leq B^{\frac{1}{k}}
  +\frac{\sqrt{A^+}}{k}\left(|\sigma|-\frac35P\right).
  \]
  The \textsc{rhs} is bounded above by $\big(\frac12
  e^{\mu_k\tau}\big)^{\frac{1}{k}}$ provided that
  $|\sigma|\leq\delta_1 e^{\gk\tau}$ and $\tau\geq\tau_8$, where we
  choose $\tau_8\geq\tau_7$ large enough so that
  $\delta_1 e^{\gk\tau_8}\geq2P$.
\end{proof}

We next establish the following bound, also anticipated in
Section~\ref{Outline}:

\begin{lemma}
  \label{GrowthCondition}
  There exist $C_\pm$ depending on $\params$ such that for
  $|\sigma|\in\left(\frac35 P, \delta_1e^{\gk\tau}\right)$
  and $\tau\geq\tau_8$, every solution in $\Xi_\ve$ satisfies
  \[
  C_-|\sigma|^k \leq e^{\mu_k\tau}|v| \leq C_+|\sigma|^k.
  \]
\end{lemma}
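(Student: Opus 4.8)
To prove Lemma~\ref{GrowthCondition}, the plan is to integrate the derivative bound of Lemma~\ref{k-bound} outward from the inner edge $\sigma=\frac35 P$ of the parabolic--intermediate interface, where two-sided control of $e^{\mu_k\tau}|v|$ is already available. Set $f(\sigma,\tau):=\big(-e^{\mu_k\tau}v(\sigma,\tau)\big)^{1/k}=\big(e^{\mu_k\tau}|v|\big)^{1/k}$; fix $\tau\geq\tau_8$ and, for definiteness, take $\sigma>0$, the case $\sigma<0$ being identical after replacing $\sigma$ by $|\sigma|$. On the range $\frac35 P<\sigma<\delta_1 e^{\gk\tau}$ one has $v<0$, so $f$ is smooth there, and Lemma~\ref{k-bound} gives $\frac{\sqrt{A^-}}{k}\leq\pl_\sigma f\leq\frac{\sqrt{A^+}}{k}$.

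First I would anchor $f$ at $\sigma=\frac35 P$. On the one hand, Lemma~\ref{BrandenburgerTor} yields $e^{\mu_k\tau}\big|v(\tfrac35 P,\tau)\big|>B$, hence $f(\tfrac35 P,\tau)>B^{1/k}$. On the other hand, since $\frac35 P<P$, the pointwise inequality~\eqref{DefineParabolicPW} defining $\Xi_\ve$ in the parabolic region gives $e^{\mu_k\tau}\big|v(\tfrac35 P,\tau)\big|\leq W$, hence $f(\tfrac35 P,\tau)\leq W^{1/k}$. Integrating the derivative bound from $\frac35 P$ to $\sigma$ then gives
\[
B^{1/k}+\frac{\sqrt{A^-}}{k}\Big(\sigma-\tfrac35 P\Big)\ \leq\ f(\sigma,\tau)\ \leq\ W^{1/k}+\frac{\sqrt{A^+}}{k}\,\sigma
\]
for all $\sigma$ in the stated range.

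It then remains to convert these affine bounds on $f$ into the asserted power bounds on $e^{\mu_k\tau}|v|=f^k$. For the upper bound, $\sigma>\frac35 P$ gives $W^{1/k}\leq\frac{5W^{1/k}}{3P}\,\sigma$, so $f(\sigma,\tau)\leq C_+^{1/k}\sigma$ with $C_+^{1/k}:=\frac{5W^{1/k}}{3P}+\frac{\sqrt{A^+}}{k}$, and raising to the $k$-th power gives $e^{\mu_k\tau}|v|\leq C_+|\sigma|^k$, where $C_+$ depends on $W$. For the lower bound, the ratio $\big(B^{1/k}+\frac{\sqrt{A^-}}{k}(\sigma-\frac35 P)\big)/\sigma$ is monotone in $\sigma$ and hence bounded below on $\sigma\geq\frac35 P$ by $\min\big\{\tfrac{\sqrt{A^-}}{k},\ \tfrac{5B^{1/k}}{3P}\big\}=:C_-^{1/k}$, which gives $e^{\mu_k\tau}|v|\geq C_-|\sigma|^k$. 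Since $A^\pm=A_8^\pm$, $B$, and $P$ depend only on $b_k$, and $W$ depends only on $b_k$, the constants $C_\pm$ depend only on $\params$ (and $C_-$ on $b_k$ alone).

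I do not expect any genuine obstacle here beyond bookkeeping: the analytic content is already packaged in Lemmas~\ref{FirstDerivativeBound}--\ref{k-bound}. The only points requiring care are that the anchor $\sigma=\frac35 P$ lies simultaneously in the region where~\eqref{DefineParabolicPW} applies (trivial, as $\frac35 P<P$) and at the inner endpoint of the range in Lemma~\ref{k-bound}, and that one tracks the dependence of the constants so as to confirm that $C_-$ does not degenerate as $\ve\to0$.
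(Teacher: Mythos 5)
Your argument is correct, and its core is the same as the paper's: anchor the quantity $(e^{\mu_k\tau}|v|)^{1/k}$ at the inner edge of the interface and integrate the two-sided derivative bound of Lemma~\ref{k-bound} outward; your lower bound is in fact identical to the paper's, producing the same constant $C_-=\bigl(\min\{B^{1/k}/(\tfrac35 P),\sqrt{A_8^-}/k\}\bigr)^k$ depending only on $b_k$. The only real divergence is in the upper bound: the paper anchors at $\sigma=\tfrac25 P$, where Lemma~\ref{BrandenburgerTor} gives the value bound $B^{1/k}$ (depending only on $b_k$), and then bridges the strip $\tfrac25 P\leq|\sigma|\leq\tfrac35 P$ by combining the gradient part of \eqref{DefineParabolicPW} with a lower bound on $e^{\mu_k\tau}|v|$ there to get the derivative bound $\vartheta$, whereas you anchor directly at $\sigma=\tfrac35 P$ with the cruder value bound $W^{1/k}$ coming from \eqref{DefineParabolicPW}. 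Your shortcut is legitimate because the lemma explicitly allows $C_+$ to depend on $\params=\{b_k,W\}$, and the downstream use in Lemma~\ref{ErrorSmall} absorbs any $W$-dependence by taking $\tau$ large; the paper's slightly longer route merely relocates the $W$-dependence from the anchor value into the derivative factor $\vartheta$, with no material advantage. One cosmetic caveat: for odd $k$ the solution has $v>0$ on the negative-$\sigma$ side, so the case ``$\sigma<0$ is identical'' really only applies verbatim when $k$ is even (the paper makes the same restriction in its lower-bound sentence); this does not affect the substance of your argument.
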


\begin{proof}
  With $C_-:=\big(\min\{B^{1/k}/(\frac35 P),\sqrt{A_8^-}/k\}\big)^k$, we
  apply Lemmas~\ref{BrandenburgerTor}--\ref{k-bound} to see
  that $(e^{\mu_k\tau}|v|)^{1/k}\geq\int_0^\sigma
  C_-^{1/k}\,\mathrm{d}\sigma$ for $\frac35P\leq\sigma\leq\delta_1
  e^{\gk\tau_1}$. An analogous estimate holds for $-\delta_1
  e^{\gk\tau_1}\leq\sigma\leq-\frac35P$ in the case that $k$ is even.
  This proves the lower bound.

  For the upper bound, note that $(e^{\mu_k\tau}\big|v\big(\frac25
  P,\tau\big)\big|)^{1/k}\leq B^{1/k}$ holds by
  Lemma~\ref{BrandenburgerTor}. In the region $\frac25
  P\leq|\sigma|\leq\frac35 P$, one has
  $|b_k|h_k(\sigma)\geq\frac{7}{16}\big(\frac45\big)^kB$ for the
  formal solution, and hence
  $e^{\mu_k\tau}|v|\geq\big(\frac45\big)^{k+3}B$, as in the proof of
  Lemma~\ref{BrandenburgerTor}. So for $\frac25
  P\leq|\sigma|\leq\frac35 P$, condition~\eqref{DefineParabolicPW}
  implies that
  $\pl_\sigma\left(e^{\mu_k\tau}|v|\right)^{\frac1k}\leq\vartheta(b_k)$,
  where
  \[
  \vartheta(b_k):=\frac{W}{k}\left(\big(\frac45\big)^{k+3}B\right)^{\frac1k-1}.
  \]
  If we now choose
  $C_+:=\big(\max\{B^{1/k}/(\frac25 P),\vartheta(b_k),\sqrt{A^+}/k\}\big)^k$,
  then the upper bound follows.
\end{proof}

\begin{remark}
  Combining Lemma~\ref{k-bound} with Lemma~\ref{GrowthCondition}, one
  obtains
  \begin{equation}
    C_-'|\sigma|^{k-1}\leq e^{\mu_k\tau}|v_\sigma|\leq C_+'|\sigma|^{k-1}
    \label{DifferentiateGrowthCondition}
  \end{equation}
  for $|\sigma|\in\big(\frac35 P,\delta_1 e^{\gk\tau}\big)$, where
  $C_\pm' := C_{\pm}^{1-1/k}\sqrt{A^\pm}$ depend on $\params$.
\end{remark}

\subsection{Estimating the ``error terms''}
The results in the previous section prepare us to derive useful
bounds for $|(\pl_\tau-\cA )v|$ and $\|(\pl_\tau-\cA )\vh\nh$.

\begin{lemma}
  \label{PointwiseBoundsinP}
  There exist $C_1$ depending only on $b_k$, and $\tau_9\geq\tau_8$ depending
  on $\params$, such that for every solution in $\Xi_\ve$, the pointwise bound
  \[
  |(\pl_\tau-\cA )v|\leq C_1 W^2 e^{-2\mu_k\tau}
  \]
  holds in the parabolic region $\cP$ for all $\tau\geq\tau_9$.
\end{lemma}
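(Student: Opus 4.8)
The plan is to identify $(\pl_\tau-\cA)v$ explicitly and then estimate it term by term using the pointwise control \eqref{DefineParabolicPW} that is available throughout $\cP$. From the evolution equation \eqref{u-evolution} for $u=1+v$ — equivalently, from \eqref{CutoffEvolution} restricted to $|\sigma|\le P$, where the cutoff $\eta$ is identically $1$ and $E[\eta,v]$ vanishes once $\tau$ is large enough that $e^{-\gk\tau/4}P\le1$, so that $\vh=v$ there — one has $(\pl_\tau-\cA)v=N[v]$, with
\[
N[v]=\frac{2(n-1)v_\sigma^2-v^2}{2(1+v)}-nI[v]\,v_\sigma .
\]
So it suffices to bound $|N[v]|$ in $\cP$ by $C_1W^2e^{-2\mu_k\tau}$.

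First I would record the consequences of \eqref{DefineParabolicPW}: in $\cP$ one has $|v|+|v_\sigma|\le We^{-\mu_k\tau}$, and hence, for $\tau$ large enough depending on $W$, also $1+v\ge\frac12$. This immediately gives $\bigl|\tfrac{2(n-1)v_\sigma^2-v^2}{2(1+v)}\bigr|\le C_n W^2e^{-2\mu_k\tau}$ with $C_n$ depending only on $n$. The one term needing care is the nonlocal contribution $nI[v]v_\sigma$, because $I[v]=\int_0^\sigma \frac{v_{\hat\sigma\hat\sigma}}{1+v}\,\mathrm{d}\hat\sigma$ involves $v_{\hat\sigma\hat\sigma}$, which \eqref{DefineParabolicPW} does not control. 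The key step is to integrate by parts, obtaining
\[
I[v](\sigma,\tau)=\frac{v_\sigma(\sigma,\tau)}{1+v(\sigma,\tau)}-\frac{v_\sigma(0,\tau)}{1+v(0,\tau)}+\int_0^\sigma\frac{v_{\hat\sigma}^2}{(1+v)^2}\,\mathrm{d}\hat\sigma .
\]
For $|\sigma|\le P$ the path of integration stays inside $\cP$, so each term is controlled by \eqref{DefineParabolicPW}: the two boundary terms are $O(We^{-\mu_k\tau})$ and the integral is $\le 4PW^2e^{-2\mu_k\tau}$, whence $|I[v]|\le C\bigl(We^{-\mu_k\tau}+PW^2e^{-2\mu_k\tau}\bigr)$ on $\cP$ with $C$ universal. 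Multiplying by $|v_\sigma|\le We^{-\mu_k\tau}$ gives $|nI[v]v_\sigma|\le C_n W^2e^{-2\mu_k\tau}+C_nPW^3e^{-3\mu_k\tau}$.

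Finally I would fix $\tau_9\ge\tau_8$ large enough — depending on $W$ and $P$, hence on $\params$, through a bound of the shape $\tau_9\gtrsim \mu_k^{-1}\log(nPW)$ — so that $C_nPWe^{-\mu_k\tau}\le1$ for all $\tau\ge\tau_9$; then the cubic remainder is absorbed into $W^2e^{-2\mu_k\tau}$. Adding the three estimates yields $|(\pl_\tau-\cA)v|=|N[v]|\le C_1W^2e^{-2\mu_k\tau}$ for $\tau\ge\tau_9$, with $C_1$ depending only on $n$ — in particular not on $W$ or $\ve$ — as required. The main obstacle, and it is a mild one, is exactly the nonlocal term $I[v]$: a naive estimate would demand a second-derivative bound that is unavailable in the parabolic region at this stage, and the integration by parts is what trades $v_{\sigma\sigma}$ for $v_\sigma^2$, a quantity that \eqref{DefineParabolicPW} does control.
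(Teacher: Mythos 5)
Your proposal is correct and follows essentially the same route as the paper: reduce to $(\pl_\tau-\cA)v=N_{\mathrm{loc}}-nv_\sigma I$ on $\cP$, bound the local quadratic terms via \eqref{DefineParabolicPW} together with $1+v\geq\tfrac12$, and integrate $I$ by parts to trade $v_{\hat\sigma\hat\sigma}$ for $v_{\hat\sigma}^2$, then absorb the higher-order-in-$W$ remainder by taking $\tau_9$ large depending on $\params$. (Your integration-by-parts formula with $(1+v)^2$ in the denominator is in fact the correct computation; the estimate is unaffected.)
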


\begin{proof}
  Recalling that $\vh=v$ in the parabolic region, we determine from
  \eqref{CutoffEvolution} that
  $(\pl_\tau-\cA )v=N_{\mathrm{loc}}-nv_\sigma I$ in $\cP $, where
  \[
  N_{\mathrm{loc}}=\frac{2(n-1)v_{\sigma}^2-v^2}{2(1+v)}
  \]
  and
  \[
  I =\int_0^{\sigma}
  \frac{v_{\hat{\sigma}\hat{\sigma}}(\hat{\sigma},\tau)}{1+v(\hat{\sigma},\tau)}
  \, \rd \hat{\sigma}.
  \]

  If we choose $\tau_9\geq\tau_8$, depending on $\params$, so that
  $We^{-\mu_k\tau_9}\leq\frac12$, then it follows from
  \eqref{DefineParabolicPW} that one has $1+v\geq
  1-We^{-\mu_k\tau}\geq \frac12$ for $|\sigma|\leq P$ and
  $\tau\geq\tau_9$.  Inequality \eqref{DefineParabolicPW} also implies
  that there exists $C_0$ (depending only on $b_k$) such that
  $|N_{\mathrm{loc}}|\leq C_0 W^2 e^{-2\mu_k\tau}$ holds in $\cP $ for
  $\tau\geq\tau_9$.

  Integration by parts shows that
  \begin{equation}
    \label{intparts}
    I=\frac{v_\sigma}{1+v}\Big|_0^\sigma
    +\int_0^\sigma \frac{v_{\hat\sigma}^2}{1+v}\,\mathrm{d}\hat\sigma.
  \end{equation}
  In $\cP $, the first term on the \textsc{rhs} of \eqref{intparts} is
  bounded by $4We^{-\mu_k\tau}$, while the second is bounded by
  $2PW^2e^{-2\mu_k\tau}\leq4PWe^{-\mu_k\tau}$. Noting that
  $|v_\sigma|\leq W e^{-\mu_k\tau}$ in $\cP $, we obtain a suitable
  bound for $|v_\sigma I|$ and thus complete the proof.
\end{proof}

\begin{lemma}
  \label{ErrorSmall}
  There exist $C_2$ depending only on $b_k$, and $\tau_{10}\geq\tau_9$
  depending on $\params$, such that for all $\tau\geq\tau_{10}$,
  every solution in $\Xi_\ve$ satisfies
  \[
  \|(\pl_\tau-\cA )\vh\nh \leq C_2 e^{-\frac32\mu_k\tau}.
  \]
\end{lemma}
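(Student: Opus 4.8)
The plan is to start from the evolution equation \eqref{CutoffEvolution}, which expresses $(\pl_\tau-\cA)\vh=\eta N[v]+E[\eta,v]$, and to bound the two terms on the right separately in $\|\cdot\nh$. The guiding observation is that the Gaussian weight $e^{-\sigma^2/4}$ decays faster than any exponential in $\sigma$, so it suffices to have \emph{pointwise} bounds on $N[v]$ and on $E[\eta,v]$ that grow at most polynomially in $\sigma$ on the support of $\eta$ and of its derivatives. Those pointwise bounds are exactly what is supplied by Lemma~\ref{PointwiseBoundsinP} on the core $\cP=\{|\sigma|\le P\}$ and by Lemma~\ref{GrowthCondition} together with its remark \eqref{DifferentiateGrowthCondition} on the intermediate interface $\{\frac35 P<|\sigma|<\delta_1 e^{\gk\tau}\}$, and for $\tau$ large the full support of $\eta$ is contained in the union of these two ranges.

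For $\|\eta N[v]\nh$ I would split the $\sigma$-integral at $|\sigma|=P$. On $\cP$, Lemma~\ref{PointwiseBoundsinP} gives $|\eta N[v]|=|(\pl_\tau-\cA)v|\le C_1 W^2 e^{-2\mu_k\tau}$, contributing $\lesssim W^2 e^{-2\mu_k\tau}$ to the norm. On the intermediate interface I would write $N[v]=N_{\mathrm{loc}}-nI[v]v_\sigma$ and insert the bounds $|v|\lesssim e^{-\mu_k\tau}|\sigma|^k$ and $|v_\sigma|\lesssim e^{-\mu_k\tau}|\sigma|^{k-1}$ from Lemma~\ref{GrowthCondition} and \eqref{DifferentiateGrowthCondition} (noting that $|v|\to0$ uniformly there, so $1+v\ge\frac12$ for $\tau$ large): this yields $|N_{\mathrm{loc}}|\lesssim e^{-2\mu_k\tau}|\sigma|^{2k}$, and for the nonlocal factor one integrates by parts as in \eqref{intparts}, splitting $\int_0^\sigma$ at $|\hat\sigma|=\frac35 P$, so that the boundary terms are $\lesssim We^{-\mu_k\tau}$ and $\lesssim e^{-\mu_k\tau}|\sigma|^{k-1}$ and the remaining integral is $\lesssim W^2 e^{-2\mu_k\tau}+e^{-2\mu_k\tau}|\sigma|^{2k-1}$. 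Hence $|\eta N[v]|$ is bounded on the support of $\eta$ by $e^{-2\mu_k\tau}$ times a polynomial in $\sigma$ whose coefficients depend only on $b_k$ and $W$; multiplying by the weight and integrating kills the polynomial, leaving $\|\eta N[v]\nh\lesssim W^p e^{-2\mu_k\tau}$ for some fixed power $p$, with implied constant depending only on $b_k$.

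For $E[\eta,v]=(\eta_\tau-\eta_{\sigma\sigma}+\frac\sigma2\eta_\sigma)v-2\eta_\sigma v_\sigma$ the derivatives of $\eta$ are supported in the transition annulus where $e^{-\gk\tau/4}|\sigma|\in[1,\frac65]$, on which $|\eta_\tau|+|\frac\sigma2\eta_\sigma|\lesssim1$, $|\eta_{\sigma\sigma}|\lesssim e^{-\gk\tau/2}$, $|\eta_\sigma|\lesssim e^{-\gk\tau/4}$. There $v$, where it is defined (and $0$ otherwise, by the convention noted after \eqref{Support}), still lies in the intermediate region, so Lemma~\ref{GrowthCondition} and \eqref{DifferentiateGrowthCondition} bound $|v|,|v_\sigma|$ by a power of $e^{-\tau}$ times a power of $|\sigma|$; since $|\sigma|\gtrsim e^{\gk\tau/4}$ on this annulus, integrating against the weight produces a super-exponentially small factor, so $\|E[\eta,v]\nh$ is negligible next to $e^{-2\mu_k\tau}$.

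Putting the two estimates together gives $\|(\pl_\tau-\cA)\vh\nh\lesssim W^p e^{-2\mu_k\tau}$ for $\tau\ge\tau_9$, with implied constant depending only on $b_k$. Taking $\tau_{10}\ge\tau_9$ large enough, depending on $\params$, that this bound is $\le e^{-\frac32\mu_k\tau}$ for all $\tau\ge\tau_{10}$ then proves the lemma with $C_2=1$. I expect the main obstacle to be the nonlocal term $nI[v]v_\sigma$: it must be handled by integrating by parts and splitting $\int_0^\sigma$ at the parabolic--intermediate interface, so that the $W$-controlled pointwise bound \eqref{DefineParabolicPW} can be used on the bounded set $\cP$ and the polynomially growing bounds of Lemma~\ref{GrowthCondition} outside it, since a direct estimate of $I[v]$ using $v_{\sigma\sigma}$ is not furnished by the earlier lemmas. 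A secondary point requiring care is verifying that, once $\tau$ is large, the support of $\eta$ really does lie inside $\cP\cup\{\frac35 P<|\sigma|<\delta_1 e^{\gk\tau}\}$, so that Lemmas~\ref{PointwiseBoundsinP} and~\ref{GrowthCondition} apply wherever they are invoked.
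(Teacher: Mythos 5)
Your proposal is correct and follows essentially the same route as the paper: the parabolic core via Lemma~\ref{PointwiseBoundsinP}, the interface via Lemma~\ref{GrowthCondition} and \eqref{DifferentiateGrowthCondition} with the integration by parts \eqref{intparts} split at the parabolic boundary to handle $I[v]v_\sigma$, the cutoff-commutator term $E[\eta,v]$ killed by the Gaussian weight on the transition annulus, and the $W$-dependence absorbed by enlarging $\tau_{10}$ (depending on $\params$) so the final constant depends only on $b_k$. The only cosmetic difference is that you integrate the polynomial-in-$\sigma$ bounds against the weight, whereas the paper converts them to a uniform pointwise bound on the support $|\sigma|\le\frac65 e^{\gk\tau/5}$ using $2k\gk/5<\mu_k/2$; both yield the stated estimate.
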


\begin{proof}
It follows from Lemma~\ref{PointwiseBoundsinP} that there exists $\hat C_0$
depending only on $b_k$ such that a pointwise bound
$|(\pl_\tau-\cA )v|\leq \hat C_0 e^{-\frac32\mu_k\tau}$
holds for $|\sigma|\leq P$, and for all sufficiently large $\tau$, depending
on $\params$. Thus to complete the proof, we concentrate on the rest of the
support of $(\pl_\tau-\cA )v$, namely $P\leq|\sigma|\leq\frac65 e^{\gk\tau/5}$.
We assume that $\tau$ is large enough so that $e^{\gk\tau/5}\leq\delta_1 e^{\gk\tau}$.

It follows from equation~\eqref{CutoffEvolution} that
$(\pl_\tau-\cA)\vh=\eta(N_{\mathrm{loc}}-n v_\sigma I)+E$ for
$|\sigma|\in\big(P,\frac65 e^{\gk\tau/5}\big)$, where
$E=\left(\eta_\tau-\eta_{\sigma\sigma}+\frac{\sigma}{2}\eta_\sigma\right)v-2\eta_\sigma v_\sigma$.

For $\ve$ chosen as in Lemma~\ref{BrandenburgerTor}, it follows from Lemma~\ref{k-bound}
that $1+v\geq\frac12$ in the interval $\big(\frac35P,\delta_1e^{\gk\tau}\big)$.  In the
same region, one has $C_-|\sigma|^k \leq e^{\mu_k\tau}|v| \leq C_+|\sigma|^k$ and
$C_-'|\sigma|^{k-1}\leq e^{\mu_k\tau}|v_\sigma|\leq C_+'|\sigma|^{k-1}$ as consequences
of Lemma~\ref{GrowthCondition} and estimate~\eqref{DifferentiateGrowthCondition}, respectively.

Combining these inequalities, and again integrating by parts to evaluate $I$, we obtain
$\hat C_1$ depending on $\params$ and $\hat C_2$ depending only on $b_k$ such that one has
\[
  |N_{\mathrm{loc}}|+|v_\sigma I| \leq \hat C_1 e^{-2\mu_k\tau}|\sigma|^{2k}
  \leq \hat C_2 e^{-\frac32\mu_k\tau}
\]
in the interval $P\leq|\sigma|\leq\frac65 e^{\gk\tau/5}$, at all $\tau$ sufficiently
large, depending on $\params$. Here we use the fact that $2k\gk/5<\mu_k/2$.

Similarly, the pointwise estimates for $v$ and $v_\sigma$ above further imply that there
exist constants $\hat C_3,\hat C_4$ depending on $\params$, and $\hat C_5$ depending only
on $b_k$, such that the estimates
  \begin{align*}
    \|E\nh^2 &\leq \hat C_3 e^{-2\mu_k\tau}
    \int_{e^{\gk\tau/5}}^{\infty}|\sigma|^{2k+2}\,e^{-\sigma^2/4}\,\rd\sigma\\
    &\leq \hat C_4 e^{-2\mu_k\tau}
        \exp{\left(-\frac{e^{2\gk\tau/5}}{4}\right)} e^{(2k+1)\gk\tau/5}\\
    &\leq \hat C_5 e^{-3\mu_k\tau}
  \end{align*}
hold for all $\tau$ sufficiently large, depending on $\params$. The result follows.
\end{proof}

Finally, we derive pointwise bounds for $\vh$ that are independent of $W$ ---
bounds that apply to solutions originating from initial data we construct
in Section~\ref{InitialData} below.

Given a smooth function $f(\sigma)$ and a constant $R>0$, we define
\[
  \|f\|_{C^1(R)} := \sup_{|\sigma|\leq R}\big(|f(\sigma)| + |f_\sigma(\sigma)|\big).
\]

\begin{lemma}
\label{StrapYourBoots}
    If at time $\tau_{11} \geq \tau_{10}$, one has
  \[
    \|\vh(\cdot,\tau_{11})\|_{C^1(2P)} \leq M e^{-\mu_k\tau_{11}}
  \]
  for some constant $M>0$, then for all $\tau\geq\tau_{11}$, one has
  \begin{equation}
    \|v(\cdot,\tau)\|_{C^1(P)}\leq C_6 (1+M) e^{-\mu_k\tau},
    \label{eq:v-pointwise-estimate}
  \end{equation}
  where $C_6\geq1$ depends only on $b_k$ (and is in particular independent of $W$).
\end{lemma}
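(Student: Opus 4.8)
The plan is to view the cutoff equation \eqref{CutoffEvolution} as a linear inhomogeneous problem $\partial_\tau\vh=\cA\vh+F$ with forcing $F:=\eta N[v]+E[\eta,v]$, and to propagate the hypothesis forward from $\tau_{11}$ by Duhamel's formula for the analytic semigroup $e^{t\cA}$ on $\hs$. Two a~priori facts are free: since the solution lies in $\Xi_\ve$, the $L^2$ inequalities \eqref{vkminusL2}--\eqref{vkplusL2} give a crude bound $\|\vh(\cdot,\tau)\nh\le C_*e^{-\mu_k\tau}$ with $C_*$ depending only on $b_k$ (for $\ve\le1$), uniformly in $\tau$; and Lemma~\ref{ErrorSmall} gives $\|F(\cdot,\tau)\nh=\|(\partial_\tau-\cA)\vh\nh\le C_2e^{-\frac32\mu_k\tau}$ with $C_2$ depending only on $b_k$, for $\tau\ge\tau_{10}$. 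The decisive structural point is that $F$ decays half an order of $e^{-\mu_k\tau/2}$ faster than $\vh$, and that $\|e^{t\cA}\|_{\hs\to\hs}\le e^{t}$ (the largest eigenvalue of $\cA$ being $-\mu_0=1$), so stepping back a unit interval in $\tau$ costs only a fixed constant; this is what lets the uniform $\hs$-bound be ``reset'' repeatedly without accumulation.

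Two smoothing estimates for $e^{t\cA}$ on the compact interval $|\sigma|\le P$ (where the weight $e^{-\sigma^2/4}$ is bounded above and below, so local Sobolev embeddings apply) are needed. First, the analyticity of $e^{t\cA}$, interpolation between its $H^1$- and $H^2$-smoothing bounds, and the embedding $H^{3/2+}\hookrightarrow C^1$ yield, for some $\alpha\in(\tfrac34,1)$ and all $t\in(0,1]$,
\[
\|e^{t\cA}g\|_{C^1(P)}\le C\,t^{-\alpha}\|g\nh ,
\]
with $C$ depending only on $b_k$. Second, and more delicately, a local regularity estimate up to $t=0$ that exploits the gap between $P$ and $2P$: for $t\in(0,1]$,
\[
\|e^{t\cA}g\|_{C^1(P)}\le C\big(\|g\|_{C^1(2P)}+\|g\nh\big).
\]
This one I would obtain by splitting $g=\chi g+(1-\chi)g$ with $\chi$ a smooth cutoff equal to $1$ on $|\sigma|\le\tfrac32P$ and supported in $|\sigma|\le2P$: on $|\sigma|\le P$ the contribution of $(1-\chi)g$ is controlled against $\|g\nh$ by the off-diagonal Gaussian decay of the Mehler kernel of $e^{t\cA}$ and of its $\sigma$-derivative (which are $O(e^{-cP^2/t})$ and $O(t^{-1/2}e^{-cP^2/t})$, both bounded for $t\le1$), while $\chi g$ is compactly supported with $\|\chi g\|_{C^1}\le C\|g\|_{C^1(2P)}$, so an interior parabolic Schauder estimate up to the initial time for $\partial_t w=\cA w$ --- legitimate because $\cA$ has smooth bounded coefficients on $|\sigma|\le2P$ --- bounds $\|e^{t\cA}(\chi g)\|_{C^1(P)}$ by $C\|\chi g\|_{C^1(2P)}+C\sup_{s\le t}\|e^{s\cA}(\chi g)\nh\le C\|g\|_{C^1(2P)}$.

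Granting these, fix $\tau\ge\tau_{11}$, set $\tau_*:=\max\{\tau_{11},\tau-1\}$, and use $\vh(\tau)=e^{(\tau-\tau_*)\cA}\vh(\tau_*)+\int_{\tau_*}^{\tau}e^{(\tau-s)\cA}F(s)\,\rd s$. The integral term is bounded in $C^1(P)$, via the first estimate and $\tau-\tau_*\le1$, by $C\big(\int_0^1t^{-\alpha}\,\rd t\big)\sup_{[\tau_*,\tau]}\|F\nh\le C'e^{-\frac32\mu_k\tau}$ with $C'$ depending only on $b_k$. For the homogeneous term there are two cases. If $\tau\ge\tau_{11}+1$, then $\tau-\tau_*=1$, and the first estimate together with the uniform $\hs$-bound gives $\|e^{\cA}\vh(\tau-1)\|_{C^1(P)}\le C\|\vh(\tau-1)\nh\le CC_*e^{-\mu_k(\tau-1)}\le C''e^{-\mu_k\tau}$, with no dependence on $M$. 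If $\tau_{11}\le\tau<\tau_{11}+1$, then $\tau_*=\tau_{11}$ and $\tau-\tau_{11}\in[0,1)$, and the second estimate together with the hypothesis and the crude $\hs$-bound gives $\|e^{(\tau-\tau_{11})\cA}\vh(\tau_{11})\|_{C^1(P)}\le C\big(\|\vh(\cdot,\tau_{11})\|_{C^1(2P)}+\|\vh(\cdot,\tau_{11})\nh\big)\le C(M+C_*)e^{-\mu_k\tau_{11}}\le C(1+M)e^{-\mu_k\tau}$, absorbing the bounded factor $e^{\mu_k(\tau-\tau_{11})}\le e^{\mu_k}$. Combining, $\|\vh(\cdot,\tau)\|_{C^1(P)}\le C_6(1+M)e^{-\mu_k\tau}$ for all $\tau\ge\tau_{11}$ with $C_6\ge1$ depending only on $b_k$; and since $\tau_{11}\ge\tau_{10}$ is large enough that $e^{\gk\tau/5}\ge P$, we have $v=\vh$ on $|\sigma|\le P$, which turns this into \eqref{eq:v-pointwise-estimate}.

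The step I expect to be the main obstacle is the second smoothing estimate, i.e., upgrading $C^1$ control on $|\sigma|\le2P$ to $C^1$ control on $|\sigma|\le P$ uniformly down to $t=0$: it requires an honest treatment of the near-diagonal part and the Gaussian tails of the Mehler kernel of $\cA$ (or, equivalently, interior parabolic Schauder estimates that stay uniform as $t\searrow0$ in the presence of the drift $-\tfrac\sigma2\partial_\sigma$, which is harmless only because that coefficient is bounded on the relevant compact range of $\sigma$). The rest is exponent bookkeeping built on $\|F\nh=O(e^{-\frac32\mu_k\tau})$ and $\|e^{t\cA}\|_{\hs\to\hs}\le e^{t}$.
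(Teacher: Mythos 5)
Your proposal is correct and follows essentially the same route as the paper's proof: the same Duhamel decomposition with the unit time-step, the smoothing estimate $\|e^{t\cA}g\|_{C^1(P)}\lesssim t^{-\alpha}\|g\nh$ combined with the tube's $L^2$ bounds and Lemma~\ref{ErrorSmall}, and the same short-time case $\tau\in[\tau_{11},\tau_{11}+1)$ handled by a $C^1(2P)\to C^1(P)$ estimate for $e^{\theta\cA}$ uniform as $\theta\searrow0$. The only difference is that you spell out (via cutoff, Mehler-kernel tails, and interior Schauder theory) the short-time estimate that the paper simply asserts from nondegenerate parabolicity of $\cA$ on the compact region.
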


\begin{proof}
By \eqref{CutoffEvolution}, we have $\vh_\tau - \cA\vh = F(\sigma, \tau)$,
where $F=\eta N[v]+E[\eta,v]$. Thus for $\tau\geq \tau_{11}+1$, the variation
of constants formula lets us write
\begin{equation}
  \vh(\cdot, \tau) =
  e^{\cA}\vh(\cdot, \tau-1) +
  \int_{\tau-1}^\tau e^{(\tau-\tau')\cA} F(\cdot, \tau') \, \rd \tau'.
  \label{VariationOfConstants}
\end{equation}
Standard regularizing estimates for the operator $\cA$ imply that
\[
  \|e^{\cA}\vh(\cdot, \tau-1) \|_{C^1(P)}
  \leq
  \check C_1 \|\vh(\cdot, \tau-1)\|_\hs,
\]
and for $0<\tau-\tau'<1$, that
\[
  \|e^{(\tau-\tau')\cA} F(\cdot, \tau')\|_{C^1(P)}
  \leq
  \check C_1(\tau-\tau')^{-3/4} \|F(\cdot, \tau')\|_\hs,
\]
where $\check C_1$ depends only on $P$, hence by \eqref{DefineP},
only on $b_k$. Conditions~\eqref{vkminusL2}--\eqref{vkplusL2} imply that
there exists $\check C_2$ depending only on $b_k$ such that
\[
    \|\vh(\cdot,\tau-1)\nh\leq \check C_2 e^{-\mu_k\tau}.
\]
Furthermore, because $\tau_{11}\geq\tau_{10}=\tau_{10}(b_k,W)$,
Lemma~\ref{ErrorSmall} provides $C_2$ depending only on $b_k$ such that
\[
  \|F(\cdot,\tau')\|_\hs \leq C_2 e^{-\frac32\mu_k\tau'}.
\]
Combining these estimates and integrating \eqref{VariationOfConstants}
gives \eqref{eq:v-pointwise-estimate} for times $\tau\geq \tau_{11}+1$.

To prove \eqref{eq:v-pointwise-estimate} for $\tau \in (\tau_{11}, \tau_{11}+1)$,
we again use variation of constants, writing
\begin{equation}
  \vh(\cdot, \tau) =
  e^{(\tau-\tau_{11})\cA}\vh(\cdot, \tau_{11}) +
  \int_{\tau_{11}}^\tau e^{(\tau-\tau')\cA} F(\cdot, \tau') \, \rd \tau'.
  \label{eq:VoC-short-time}
\end{equation}
As above, we can use the estimate for $\|F(\cdot,\tau')\nh$ given by Lemma~\ref{ErrorSmall}
to obtain a satisfactory $C^1$ estimate for the integral term. However, we cannot
use the smoothing properties of the operator $e^{\theta\cA}$ to estimate the
first term on the \textsc{rhs}, since the time delay $\theta = \tau-\tau_{11}$
may now be arbitrarily short. But since $\cA$ is a nondegenerate parabolic operator,
there exists $\check C_3$ depending only on $P$, hence only on $b_k$, such that for
$\theta\in(0,1)$, one has
\[
  \|e^{\theta\cA} \vh(\cdot, \tau_{11}) \|_{C^1(P)} \leq
  \check C_3 \| \vh(\cdot, \tau_{11}) \|_{C^1(2P)}.
\]
Since $\|\vh(\cdot,\tau_{11})\|_{C^1(2P)} \leq M e^{-\mu_k\tau_{11}}$ by hypothesis,
this lets us estimate the first term on the \textsc{rhs} of \eqref{eq:VoC-short-time}.
Thus we obtain \eqref{eq:v-pointwise-estimate} for $\tau_{11}<\tau<\tau_{11}+1$,
which completes the proof.
\end{proof}

\subsection{Exit and entrapment results}\label{InOrOut}

We now use the estimates derived above to prove the exit and
entrapment results corresponding to various portions of the boundary
of $\Xi_{\ve}$ in the parabolic region. Our first result shows
that solutions immediately exit if they contact $(\pl\Xi_\ve)_-$.

\begin{lemma}
  \label{ExitLemma}
  There exists $\bt\geq\tau_{10}$ depending on $\params$ such that if
  a solution $v\in\Xi_\ve$ contacts $\pl\Xi_\ve$ by achieving equality
  in \eqref{vkminusL2} at $\tau\geq\bt$, then it immediately exits $\Xi_\ve$.
\end{lemma}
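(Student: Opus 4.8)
The plan is to track the rapidly-growing modes directly and to show that the quantity $e^{2\mu_k\tau}\|v_{k-}(\tau)\nh^2$ is strictly increasing at any time $\tau\ge\bt$ at which equality holds in~\eqref{vkminusL2}; this forces the inequality to fail for slightly larger $\tau$, so the solution leaves $\Xi_\ve$. Write $a_j(\tau):=\lh\vh,h_j\rh/\|h_j\nh^2$ for $0\le j\le k-1$, so that $v_{k-}=\sum_{j=0}^{k-1}a_j h_j$ and, by orthogonality of the Hermite polynomials, $\|v_{k-}\nh^2=\sum_{j=0}^{k-1}a_j^2\|h_j\nh^2$. First I would derive the ODE satisfied by the $a_j$. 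Since $\vh$ has compact support in $\sigma$ by construction~\eqref{Support}, the operator $\cA$ may be integrated by parts against $\vh$ with no boundary contribution, so \eqref{CutoffEvolution} together with $(\cA+\mu_j)h_j=0$ gives
\[
\dot a_j=-\mu_j a_j+\frac{\lh F,h_j\rh}{\|h_j\nh^2},\qquad F:=\eta N[v]+E[\eta,v]=(\pl_\tau-\cA)\vh.
\]
Differentiating $\|v_{k-}\nh^2$ and using $\sum_{j=0}^{k-1}a_j\lh F,h_j\rh=\lh F,v_{k-}\rh$, I then obtain
\[
\frac12\frac{d}{d\tau}\|v_{k-}\nh^2=-\sum_{j=0}^{k-1}\mu_j a_j^2\|h_j\nh^2+\lh F,v_{k-}\rh.
\]

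The crucial structural input is the spectral gap: $\mu_j\le\mu_{k-1}=\mu_k-\tfrac12$ for every $j\le k-1$, so the first sum is bounded below by $-(\mu_k-\tfrac12)\|v_{k-}\nh^2$. For the forcing term I would use Cauchy--Schwarz, $|\lh F,v_{k-}\rh|\le\|F\nh\,\|v_{k-}\nh$, together with Lemma~\ref{ErrorSmall}, which is available because at the contact time the solution still satisfies the defining inequalities \eqref{vkminusL2}--\eqref{DefineParabolicPW} (the first with equality), and which gives $\|F\nh\le C_2 e^{-\frac32\mu_k\tau}$ with $C_2=C_2(b_k)$. Combining these estimates,
\begin{align*}
  \frac{d}{d\tau}\bigl(e^{2\mu_k\tau}\|v_{k-}\nh^2\bigr)
  &= e^{2\mu_k\tau}\Bigl(2\mu_k\|v_{k-}\nh^2+\tfrac{d}{d\tau}\|v_{k-}\nh^2\Bigr)\\
  &\ge e^{2\mu_k\tau}\,\|v_{k-}\nh\,\bigl(\|v_{k-}\nh-2C_2 e^{-\frac32\mu_k\tau}\bigr).
\end{align*}
At a time $\tau\ge\tau_{10}$ with $\|v_{k-}\nh=\ve e^{-\mu_k\tau}$, the right-hand side equals $\ve\bigl(\ve-2C_2 e^{-\frac12\mu_k\tau}\bigr)$, which is strictly positive once $\tau>\frac{2}{\mu_k}\log(2C_2/\ve)$. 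Since $\mu_k=\tfrac k2-1>0$ for $k\ge3$, I would set $\bt:=\max\bigl\{\tau_{10},\,\tfrac{2}{\mu_k}\log(2C_2/\ve)+1\bigr\}$; this depends on $\params$ through $\tau_{10}(\params)$ and $\ve=\ve(\params)$, with $C_2$ depending only on $b_k$.

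To conclude: at any contact time $\tau\ge\bt$, the smooth function $\tau\mapsto e^{2\mu_k\tau}\|v_{k-}(\tau)\nh^2$ equals $\ve^2$ and has strictly positive derivative, hence exceeds $\ve^2$ on a right-neighborhood of $\tau$; equivalently $\|v_{k-}\nh>\ve e^{-\mu_k\tau}$ there, so~\eqref{vkminusL2} fails and the solution exits $\Xi_\ve$ immediately. The only points requiring care are the no-boundary-term justification for integrating $\cA$ by parts against $\vh$ --- which is precisely what the cutoff in~\eqref{Support} provides --- and the verification that Lemma~\ref{ErrorSmall} applies on the relevant face of $\pl\Xi_\ve$; I do not expect either to present a genuine obstacle. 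The mechanism is transparent: relative to the natural scale $e^{-\mu_k\tau}$ the modes $h_0,\dots,h_{k-1}$ grow at rate at least $e^{\tau/2}$, which dominates the $O(e^{-\frac12\mu_k\tau})$ relative size of the forcing once $\tau$ is large, so once $\|v_{k-}\nh$ reaches the level $\ve e^{-\mu_k\tau}$ it can only move past it.
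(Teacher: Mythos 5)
Your argument is correct and is essentially the paper's own proof: both track $e^{\mu_k\tau}\|v_{k-}\nh$, use the spectral gap $\mu_k-\mu_{k-1}=\tfrac12$ (the paper states this as $\lh v_{k-},\cA\vh\rh\geq-\mu_{k-1}\|v_{k-}\nh^2$, which your mode-by-mode ODE derivation simply makes explicit), and control the forcing via Cauchy--Schwarz and Lemma~\ref{ErrorSmall} to get strict positivity of the derivative at any contact time $\tau\geq\bt$. Your explicit choice of $\bt$ and the calculus argument for immediate exit match the paper's (more tersely stated) conclusion, so there is nothing to add.
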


\begin{proof}
  We recall from Section~\ref{Structure} that the projection $v_{k-}$
  represents the rapidly-growing perturbations of a formal
  solution. Since the projection from $\vh$ to $v_{k-}$ commutes with
  the operator $\cA$, and since the projections $\{v_{k-}, v_k, v_{k+}\}$
  are pairwise orthogonal, one has
  \[
  \frac12\frac{\rd}{\rd\tau}\|v_{k-}\nh^2 =\lh
  v_{k-},\dfdtau\vh\sigma\rh =\lh v_{k-},\cA \vh+(\pl_\tau-\cA
  )\vh\rh.
  \]
  We readily verify the inequality $\lh
  v_{k-},\cA\vh\rh\geq-\mu_{k-1}\|v_{k-}\nh^2$, from which it follows
  that
  \[
  \frac12\frac{\rd}{\rd\tau}\|e^{\mu_k\tau}v_{k-}\nh^2
  \geq(\mu_k-\mu_{k-1})\|e^{\mu_k\tau}v_{k-}\nh^2 -|\lh
  e^{\mu_k\tau}v_{k-},e^{\mu_k\tau}(\pl_\tau-\cA )\vh\rh|.
  \]
  Hence using Lemma~\ref{ErrorSmall} and Cauchy--Schwarz, we obtain
  \[
  \frac12\frac{\rd}{\rd\tau}\|e^{\mu_k\tau}v_{k-}\nh^2\Big|_{\|e^{\mu_k\tau}v_{k-}\nh=\ve}
  \geq\ve\big(\frac{\ve}{2}-C_2 e^{-\frac12\mu_k\tau}\big)>0
  \]
  at all times $\tau\geq\bt$, for $\bt\geq\tau_{10}$ chosen sufficiently
  large, depending on $\params$.
\end{proof}

Our next three results are entrapment lemmas.

\begin{lemma}
  \label{FirstEntrapmentLemma}
  There exists $\bt\geq\tau_{10}$ depending on $\params$ such that
  solutions $v\in\Xi_\ve$ cannot contact $\pl\Xi_\ve$ by achieving
  equality in condition~\eqref{vkplusL2} at any $\tau\geq\bt$.
\end{lemma}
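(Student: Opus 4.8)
The plan is to run, for the rapidly-decaying projection $v_{k+}$, the same weighted-$L^2$ differential inequality used for $v_{k-}$ in the proof of Lemma~\ref{ExitLemma}, except that now the spectral gap has the favorable sign. Since $v_{k+}$ is supported on the Hermite modes $h_j$ with $j\geq k+1$, on which $\cA$ has eigenvalues $-\mu_j\leq-\mu_{k+1}$, these perturbations of the formal solution decay strictly faster than the neutral rate $e^{-\mu_k\tau}$; the only mechanism that could inflate $\|e^{\mu_k\tau}v_{k+}\nh$ back up to $\ve$ is the inhomogeneous term $(\pl_\tau-\cA)\vh$, which Lemma~\ref{ErrorSmall} controls at the higher-order rate $e^{-\frac32\mu_k\tau}$.

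Concretely, I would first record that the projection $\vh\mapsto v_{k+}$ commutes with $\cA$ and that $\{v_{k-},v_k,v_{k+}\}$ are mutually orthogonal in $\hs$, so that from \eqref{CutoffEvolution},
\[
\frac12\frac{\rd}{\rd\tau}\|v_{k+}\nh^2
=\lh v_{k+},\dfdtau\vh\sigma\rh
=\lh v_{k+},\cA v_{k+}\rh+\lh v_{k+},(\pl_\tau-\cA)\vh\rh.
\]
Expanding $v_{k+}$ in the eigenbasis gives $\lh v_{k+},\cA v_{k+}\rh\leq-\mu_{k+1}\|v_{k+}\nh^2$, whence
\[
\frac12\frac{\rd}{\rd\tau}\|e^{\mu_k\tau}v_{k+}\nh^2
\leq-(\mu_{k+1}-\mu_k)\|e^{\mu_k\tau}v_{k+}\nh^2
+\big|\lh e^{\mu_k\tau}v_{k+},\,e^{\mu_k\tau}(\pl_\tau-\cA)\vh\rh\big|.
\]
Using $\mu_{k+1}-\mu_k=\tfrac12$, Lemma~\ref{ErrorSmall}, and Cauchy--Schwarz, at any configuration where $\|e^{\mu_k\tau}v_{k+}\nh=\ve$ one obtains
\[
\frac12\frac{\rd}{\rd\tau}\|e^{\mu_k\tau}v_{k+}\nh^2\Big|_{\|e^{\mu_k\tau}v_{k+}\nh=\ve}
\leq\ve\big(-\tfrac12\ve+C_2e^{-\frac12\mu_k\tau}\big),
\]
which is strictly negative as soon as $C_2e^{-\frac12\mu_k\tau}<\tfrac12\ve$, i.e. for all $\tau\geq\bt$ once $\bt\geq\tau_{10}$ is enlarged accordingly. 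Because $C_2$ depends only on $b_k$ while $\ve=\ve(b_k,W)$, this enlargement of $\bt$ is the sole new $\params$-dependence.

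Finally I would read off the conclusion: if a solution in $\Xi_\ve$ attained equality in \eqref{vkplusL2} at some $\tau_*\geq\bt$, then $\|e^{\mu_k\tau}v_{k+}\nh^2$ would be strictly decreasing through $\tau_*$, so it would have exceeded $\ve^2$ just before $\tau_*$, contradicting membership in $\Xi_\ve$ (the initial-time edge case being excluded by the construction of the initial data in Section~\ref{InitialData}); hence \eqref{vkplusL2} is never saturated for $\tau\geq\bt$. I do not expect a genuine obstacle here, since the argument is the spectral dual of Lemma~\ref{ExitLemma}; the only care needed is bookkeeping — keeping $C_2$ the $b_k$-only constant of Lemma~\ref{ErrorSmall}, noting that $\vh$ (hence each projection) is $\tau$-differentiable with values in $\hs$ by parabolic regularity, and checking that every constant entering the final inequality is independent of $W$ except through the choice of $\ve$.
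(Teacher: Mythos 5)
Your proposal is correct and follows essentially the same route as the paper: project onto the modes $j\geq k+1$, use $\lh v_{k+},\cA\vh\rh\leq-\mu_{k+1}\|v_{k+}\nh^2$, weight by $e^{\mu_k\tau}$, and beat the error term via Lemma~\ref{ErrorSmall} to get a strictly negative derivative at any contact with $\|e^{\mu_k\tau}v_{k+}\nh=\ve$ once $\bt$ is enlarged (depending on $\params$). Your sign $-\tfrac12\ve$ in the final display is the correct one (the paper's printed $+\tfrac{\ve}{2}$ is an evident typo, since $\mu_k-\mu_{k+1}=-\tfrac12$), and your closing continuity argument matches the paper's.
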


\begin{proof}
  Arguing as in the proof of Lemma~\ref{ExitLemma}, one obtains
  \[
  \frac12\frac{\rd}{\rd\tau}\|v_{k+}\nh^2 =\lh v_{k+},\cA
  \vh+(\pl_\tau-\cA)\vh\rh.
  \]
  Here, one has $\lh v_{k+},\cA\vh\rh\leq-\mu_{k+1}\|v_{k+}\nh^2$,
  from which it follows that
  \[
  \frac12\frac{\rd}{\rd\tau}\|e^{\mu_k\tau}v_{k+}\nh^2
  \leq(\mu_k-\mu_{k+1})\|e^{\mu_k\tau}v_{k+}\nh^2 +|\lh
  e^{\mu_k\tau}v_{k+},e^{\mu_k\tau}(\pl_\tau-\cA )\vh\rh|.
  \]
  Combining this inequality with Lemma~\ref{ErrorSmall}, we see that
  if a solution were to contact $\pl\Xi_\ve$, then one would have
  \[
  \frac12\frac{\rd}{\rd\tau}\|e^{\mu_k\tau}v_{k+}\nh^2\Big|_{\|e^{\mu_k\tau}v_{k+}\nh=\ve}
  \leq\ve\big(\frac{\ve}{2}+C_2 e^{-\frac12\mu_k\tau}\big)<0
  \]
  for all $\tau\geq\bt\geq\tau_{10}$. By continuity, in a
  neighborhood of $\{\|e^{\mu_k\tau}v_{k+}\nh=\ve\}$, one has
  $\frac{d}{d\tau}\|e^{\mu_k\tau}v_{k+}\nh^2<0$ for $\tau\geq\bt$;
  this ensures that equality is never achieved.
\end{proof}

\medskip
Our final two entrapment results apply to solutions in originating in the
set $D_{\ve}^{\bt}$ defined by
\begin{equation}
  D_{\ve}^{\bt} :=
  \left\{v:
  \begin{array}{l}
  \|v_{k}(\cdot,\bt)-b_ke^{-\mu_k\bt}h_k\nh\leq\frac{\ve}{2}e^{-\mu_k\bt}\\ \\
  \|v_{k}(\cdot,\bt)-b_ke^{-\mu_k\bt}h_k\|_{C^1(2P)}\leq 100e^{-\mu_k\bt}
  \end{array}
  \right\}.
  \label{InitialDisc}
\end{equation}
This set is designed so that solutions satisfying
$v(\cdot,\bt)\in\Xi_\ve\cap D_{\ve}^{\bt}$ cannot exit $\Xi_\ve$ by violating
conditions~\eqref{vkL2} or \eqref{DefineParabolicPW}.

\begin{lemma}
  There exists $\bt\geq\tau_{10}$ large enough, depending on $\params$,
  such that solutions satisfying $v(\cdot,\bt)\in\Xi_\ve\cap D_{\ve}^{\bt}$
  cannot contact $\pl\Xi_\ve$ by achieving equality in condition~\eqref{vkL2}
  at any $\tau\geq\bt$.
\end{lemma}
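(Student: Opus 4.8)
The plan is to exploit the fact that $v_k(\cdot,\tau)$ is, for every $\tau$, a scalar multiple of the single eigenfunction $h_k$, so that condition~\eqref{vkL2} collapses to a one-dimensional inequality governed by a scalar ODE. Writing $p(\tau):=\lh\vh,h_k\rh$, so that by~\eqref{DefineProjections} one has $v_k=\{p(\tau)/\|h_k\nh^2\}h_k$, a direct computation gives
\[
\|v_k-b_ke^{-\mu_k\tau}h_k\nh = e^{-\mu_k\tau}\,|a(\tau)|\,\|h_k\nh,
\qquad
a(\tau):=\frac{e^{\mu_k\tau}p(\tau)}{\|h_k\nh^2}-b_k .
\]
Thus~\eqref{vkL2} is equivalent to $|a(\tau)|\leq\ve/\|h_k\nh$, and it suffices to prove the strict inequality $|a(\tau)|<\ve/\|h_k\nh$ for all $\tau\geq\bt$.

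First I would derive the evolution of $a$. Since the Hermite polynomials are eigenfunctions, $\cA h_k=-\mu_k h_k$, and since $\cA$ is self-adjoint on $\hs$, equation~\eqref{CutoffEvolution} yields
\[
\frac{\rd p}{\rd\tau}=\lh\cA\vh,h_k\rh+\lh(\pl_\tau-\cA)\vh,h_k\rh
=-\mu_k p(\tau)+\lh(\pl_\tau-\cA)\vh,h_k\rh,
\]
hence $\frac{\rd}{\rd\tau}(e^{\mu_k\tau}p)=e^{\mu_k\tau}\lh(\pl_\tau-\cA)\vh,h_k\rh$, and therefore
\[
\frac{\rd a}{\rd\tau}=\frac{e^{\mu_k\tau}}{\|h_k\nh^2}\,\lh(\pl_\tau-\cA)\vh,h_k\rh .
\]
Applying Cauchy--Schwarz and the error estimate of Lemma~\ref{ErrorSmall}, which is available so long as the solution remains in $\Xi_\ve$, gives
\[
\Bigl|\frac{\rd a}{\rd\tau}\Bigr|\leq\frac{e^{\mu_k\tau}}{\|h_k\nh}\,\|(\pl_\tau-\cA)\vh\nh
\leq\frac{C_2}{\|h_k\nh}\,e^{-\frac12\mu_k\tau}
\]
for all $\tau\geq\tau_{10}$, with $C_2$ depending only on $b_k$.

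Next I would integrate. Because $\mu_k=\tfrac k2-1>0$ for $k\geq3$, integrating the last inequality from $\bt$ gives, for all $\tau\geq\bt\geq\tau_{10}$,
\[
|a(\tau)|\leq|a(\bt)|+\frac{2C_2}{\mu_k\|h_k\nh}\,e^{-\frac12\mu_k\bt}.
\]
The first line of the definition~\eqref{InitialDisc} of $D_\ve^{\bt}$ says precisely that $\|h_k\nh\,|a(\bt)|\leq\tfrac\ve2$, so that
\[
\|h_k\nh\,|a(\tau)|\leq\frac\ve2+\frac{2C_2}{\mu_k}\,e^{-\frac12\mu_k\bt}.
\]
I would then choose $\bt\geq\tau_{10}$ large enough, depending on $\params$ (recall that $C_2=C_2(b_k)$ while $\ve=\ve(\params)$), so that $\tfrac{2C_2}{\mu_k}e^{-\frac12\mu_k\bt}<\tfrac\ve2$; this forces $\|h_k\nh\,|a(\tau)|<\ve$, i.e., strict inequality in~\eqref{vkL2}, for all $\tau\geq\bt$. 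Running this argument on the maximal interval $[\bt,\tau_*]$ on which the solution stays in $\Xi_\ve$ then shows that equality in~\eqref{vkL2} can never be achieved, so $\pl\Xi_\ve$ is never reached through this condition, which is the assertion of the lemma.

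I do not expect a serious obstacle here: the heart of the matter is a scalar ODE comparison, and the only subtlety is the apparent circularity that Lemma~\ref{ErrorSmall} presupposes membership in $\Xi_\ve$ --- dispatched by the standard first-exit-time device above. Two structural points are worth stressing. First, the $C^1$ clause in~\eqref{InitialDisc} is not used in this lemma; it is needed only for the companion entrapment result for the pointwise condition~\eqref{DefineParabolicPW}. Second, the essential role of $D_\ve^{\bt}$ is the factor-of-two slack (the bound $\tfrac\ve2$ rather than $\ve$ at time $\bt$), which is exactly what absorbs the $\cO(e^{-\frac12\mu_k\bt})$ accumulated error and upgrades a merely non-strict inequality to the strict inequality needed for all later times.
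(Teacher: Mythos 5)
Your proposal is correct and follows essentially the same route as the paper: project onto the $h_k$-mode, derive the scalar ODE for the coefficient using $\cA h_k=-\mu_k h_k$ and self-adjointness, bound the forcing by Cauchy--Schwarz together with Lemma~\ref{ErrorSmall}, integrate from $\bt$, and absorb the accumulated error into the $\ve/2$ slack provided by $D_\ve^{\bt}$ for $\bt$ large. The only difference is bookkeeping: you correctly retain the factor $e^{\mu_k\tau}$ when rescaling, so your integrated error decays like $e^{-\frac12\mu_k\bt}$ rather than the $e^{-\frac32\mu_k\bt}$ appearing in the paper's display, which changes nothing since either rate can be made smaller than $\ve/2$ by choosing $\bt$ large.
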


\begin{proof}
  We first note that one can write
  $v_k(\sigma,\tau)=\phi(\tau)\eta(P^{-1}\sigma)h_k(\sigma)$, where
  the quantity
  \[
  \phi(\tau):=\|h_k\nh^{-2}\lh\vh,h_k\rh
  \]
  can be shown to evolve by
  \[
  \frac{\rd}{\rd \tau} \phi
    =-\mu_k\phi+\|h_k\nh^{-2}\lh(\pl_\tau-\cA)\vh,h_k\rh.
  \]
  Observing that
  \[
  \|e^{\mu_k\tau}v_k-b_kh_k\nh=(e^{\mu_k\tau}\eta\phi-b_k)\|h_k\nh,
  \]
  one computes that
  \begin{align*}
    \frac12\frac{\rd}{\rd\tau}\|e^{\mu_k\tau}v_k-b_kh_k\nh^2
    &=\|h_k\nh^2 (e^{\mu_k\tau}\phi-b_k) \frac{d}{d\tau}(e^{\mu_k\tau}\phi-b_k)\\
    &\leq\eta\|e^{\mu_k\tau}v_k-b_kh_k\nh |\lh(\pl_\tau-\cA )\vh,h_k\rh|.
  \end{align*}
  It then follows from Lemma~\ref{ErrorSmall} that one has
  \[
  \frac{\rd}{\rd\tau}\|e^{\mu_k\tau}v_k-b_kh_k\nh \leq C_2 \|h_k\nh
  e^{-\frac32\mu_k\tau}.
  \]
  So if $v\in\Xi_\ve\cap D_{\ve}^{\bt}$, then for all $\tau\geq\bt$,
  one has
  \[
  \|e^{\mu_k\tau}v_k-b_kh_k\nh \leq\frac{\ve}{2}+\frac{2C_2
    \|h_k\nh}{3\mu_k}e^{-\frac32\mu_k\bt} <\ve
  \]
  provided that $\bt$ is chosen sufficiently large, depending on
  $b_k$ and $\ve$ (which depends on $\params$ via Lemma~\ref{BrandenburgerTor}).
  We conclude that equality in \eqref{vkL2} cannot be attained.
\end{proof}

\begin{lemma}
\label{PWentrapment}
There exists $W$ sufficiently large, depending only on $b_k$, and there exists
$\bt\geq\tau_{10}$ sufficiently large, depending on $\params$, such that solutions
satisfying $v(\cdot,\bt)\in\Xi_\ve\cap D_{\ve}^{\bt}$ cannot contact $\pl\Xi_\ve$
by achieving equality in \eqref{DefineParabolicPW} at any $\tau\geq\bar\tau$.
\end{lemma}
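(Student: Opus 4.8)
The plan is to deduce the lemma from the $C^1$-propagation estimate of Lemma~\ref{StrapYourBoots}, fixing the constant $W$ only at the very end. If, for solutions originating in $\Xi_\ve\cap D_\ve^{\bt}$, we can establish $\|v(\cdot,\tau)\|_{C^1(P)}\leq C_3\,e^{-\mu_k\tau}$ for all $\tau\geq\bt$ with $C_3$ depending only on $b_k$, then choosing $W:=2C_3$ --- a legitimate choice, since $C_3$ is a definite function of $b_k$ and all the constants $P,\delta_1,\tau_8,\dots$ produced earlier were shown to be independent of $W$ --- forces $\sup_{|\sigma|\leq P}(|v|+|\pl_\sigma v|)=\|v(\cdot,\tau)\|_{C^1(P)}\leq\tfrac12 We^{-\mu_k\tau}$, so \eqref{DefineParabolicPW} holds with strict inequality at every $\tau\geq\bt$ and equality is never attained.

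First I would fix $\bt\geq\tau_{10}$ large enough --- a requirement depending on $\params$ --- that $2P\leq e^{\gk\bt/5}$ (so $\vh(\cdot,\bt)=v(\cdot,\bt)$ on $|\sigma|\leq 2P$) and that $\bt$ exceeds every large-time threshold ($\tau_9,\tau_{10},\dots$) appearing in Lemmas~\ref{k-bound}--\ref{ErrorSmall}. Next I would verify the hypothesis of Lemma~\ref{StrapYourBoots} at $\tau_{11}=\bt$: the $C^1(2P)$ control of the initial datum built into the definition of $D_\ve^{\bt}$ in \eqref{InitialDisc}, together with the $L^2$ inequalities \eqref{vkminusL2}--\eqref{vkplusL2} (the rapidly-growing part $v_{k-}$ lies in the fixed finite-dimensional span of $h_0,\dots,h_{k-1}$, on which $\|\cdot\nh$ and $\|\cdot\|_{C^1(2P)}$ are equivalent with constants depending only on $k$ and $P$), yields $\|\vh(\cdot,\bt)\|_{C^1(2P)}\leq M\,e^{-\mu_k\bt}$ with $M$ depending only on $b_k$. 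Lemma~\ref{StrapYourBoots} then applies verbatim, giving $\|v(\cdot,\tau)\|_{C^1(P)}\leq C_6(1+M)e^{-\mu_k\tau}$ for all $\tau\geq\bt$; I set $C_3:=C_6(1+M)$, which depends only on $b_k$ since $C_6$ is $W$-independent, and take $W:=2C_3$ as above. The solution satisfies the $L^2$ inequalities of $\Xi_\ve$ and the error bound of Lemma~\ref{ErrorSmall} for as long as it remains in $\Xi_\ve$, so this estimate holds precisely where it is needed.

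The one genuinely delicate point is this last step: producing a $C^1(2P)$ bound on $\vh(\cdot,\bt)$ with a constant \emph{independent of $W$}. The pointwise inequality \eqref{DefineParabolicPW} only bounds $\|v(\cdot,\bt)\|_{C^1(P)}$ by $We^{-\mu_k\bt}$, and an $L^2$ bound on the rapidly-decaying tail $v_{k+}$ does not by itself give \emph{any} $C^1$ control on the compact interval $|\sigma|\leq 2P$, since mass can hide in high Hermite modes. It is exactly this $W$-independent $C^1(2P)$ information that the set $D_\ve^{\bt}$ is designed to carry (and that the $k$-parameter family of initial data constructed in Section~\ref{InitialData} is arranged to satisfy). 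Once it is available, the rest is bookkeeping, because Lemma~\ref{StrapYourBoots} was set up so that its output constant $C_6(1+M)$ depends only on $b_k$.
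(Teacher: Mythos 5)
Your proposal is correct and follows essentially the same route as the paper: the paper's proof simply applies Lemma~\ref{StrapYourBoots} at $\tau_{11}=\bt$ with $M=100$ (the $C^1(2P)$ bound built into $D_\ve^{\bt}$), obtains $\sup_{|\sigma|\leq P}(|v|+|v_\sigma|)\leq C_3 e^{-\mu_k\tau}$ with $C_3=101\,C_6$ depending only on $b_k$, and then takes any $W>C_3$, exactly your strategy of fixing $W$ last via the $W$-independence of $C_6$. Your additional remarks about extracting the $C^1(2P)$ hypothesis from $D_\ve^{\bt}$ together with the finite-dimensionality of the $v_{k-}$ modes are a slightly more detailed verification of the same input the paper takes directly from \eqref{InitialDisc} and the initial-data construction of Section~\ref{InitialData}.
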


\begin{proof}
Applying Lemma~\ref{StrapYourBoots} with $\tc=\bt\geq\tau_{10}$ and $M=100$ yields
\[
    \sup_{|\sigma|\leq P}\big\{|v(\sigma,\tau)| + |v_\sigma(\sigma,\tau)|\big\} \leq
    C_3 e^{-\mu_k\tau}
\]
for all $\tau\geq\bt$, where $C_3:=101 C_6$ depends only on $b_k$. Hence the result
holds for any $W>C_3$.
\end{proof}

\section{Constructing suitable initial data} \label{InitialData}

With the properties of the tubular neighborhood $\Xi_{\ve}$ surrounding a given
formal solution $\hat g_{\{n, k, b_k\}}(t)$ established (primarily in
Sections~\ref{PuttingUpBarriers}--\ref{ExitStrategy}), including the exit and
entrapment properties of portions of the boundary of $\Xi_{\ve}$, the remaining
work needed in order to complete the proof of our main theorem is to show that
there exist parameterized sets of initial data yielding Ricci flow solutions
that have the properties discussed in the introduction to Section~\ref{Structure}.
In particular, presuming that the choices of $\hat g_{\{n, k, b_k\}}(t)$ have been
fixed, we seek a continuous bijective map $\Phi$ from a closed topological $k$-ball
$\mathcal{B}^k$ to the set of rotationally symmetric metrics on
$\mathcal{S}^{n+1}$ with the following properties, for some choice of time
$t_0\in[0,T)$: (i) the image of $\Phi$ is contained in $\Xi_{\ve} \bigcap
\{t=t_0\}$; (ii) the image of $\Phi$ restricted to $\partial \mathcal{B}^k$ is
contained in the exit set $(\partial \Xi_{\ve})_{-}$; (iii) $\Phi |_{\partial
\mathcal{B}^k}$ is not contractible; and (iv) the image of $\Phi$ is far enough
away from the neutral part of the boundary $(\pl\Xi_\ve)_\circ$ to guarantee
that so long as a Ricci flow solution that starts from initial data in
$\Phi(\mathcal{B}^k)$ stays in $\Xi_{\ve}$, it never reaches
$(\pl\Xi_\ve)_\circ$.

We show in this section that for each choice of $\hat g_{\{n, k,b_k\}}(t)$
and corresponding $\Xi_{\ve}$, sets of initial data satisfying these
conditions can be found, if $\ve$ is small enough.

We first fix the initial time $t_0$ at which we choose our sets of
initial data. Noting that the results stated in the exit and
entrapment lemmas discussed in Section~\ref{InOrOut} are only
guaranteed to work for $\tau\geq\bar\tau$ (corresponding to
$t\geq T-e^{-\bar\tau}$), we set $t_0:=T-e^{-\bar\tau}$, where
$\bar\tau$ may be presumed to be large enough so that one has
$t_0>0$ and $\frac65 e^{\gk\bt/5}\geq2P$.

We next define initial data locally using coordinates adapted
to the parabolic region of the formal solution, since it is in
this region where the restrictions stated above are most critical.
Working with the metric variables and coordinates discussed
in Section~\ref{ParabolicCoordinates}, and recalling that $k\ge3$ and
$b_k<0$ are fixed, we see that for every set of $k$ constants
\[
b_j\in \big[-\underline{B}_j,\overline{B}_j\big],\qquad 0\leq j \leq
k-1,
\]
(with $\underline{B}_j$ and $\overline{B}_j$ small positive constants to be
determined below), if we write
\[
u=1+\sum_{j=0}^k b_j e^{-\mu_k\bt}h_k(\sigma),\qquad
\Big(|\sigma|\leq\frac75e^{\gk\bt/5}\Big),
\]
and if in addition we set $\vp(\cdot,\bar \tau)\equiv1$, then
a metric $g=(\rd\sigma)^2+u^2 g_{\rm can}$ is determined locally.
It is clear that one can choose $\underline{B}_j,\overline{B}_j>0$ small
enough such that the corresponding metrics are as close as desired in the
$C^1$ topology to the approximate solution $1+b_k e^{-\mu_k\bt}h_k(\sigma)$
in the region $|\sigma|\leq\frac75e^{\gk\bt/5}$. Closeness to $\hat g_{\{n,k, b_k\}}$
in that same region then follows.

With an eye toward conditions \eqref{vkminusL2}--\eqref{vkplusL2} and
\eqref{InitialDisc}, we choose $\underline{B}_j,\overline{B}_j>0$
so that the sharp inequalities
\begin{align*}
  \| v_{k-} \nh  &<\ve e^{-\mu_k\bt},\\
  \| v_{k}-b_ke^{-\mu_k\tau}h_k\nh &<\frac{\ve}{2} e^{-\mu_k\bt},\\
  \| v_{k+}\nh &<\ve e^{-\mu_k\bt},
\end{align*}
hold for
$(b_0,\dots,b_{k-1})\in\prod_{j=0}^{k-1}\big(-\underline{B}_j,\overline{B}_j\big)$,
with
\[
    \| v_{k-} \nh=\ve e^{-\mu_k\bt}
\]
holding on the boundary, which ensures by Lemma~\ref{ExitLemma} that solutions originating
from initial data on the boundary of the topological ball
$\prod_{j=0}^{k-1}\big[-\underline{B}_j, \overline{B}_j\big]$ immediately exit $\Xi_\ve$.
We verify that in choosing $\underline{B}_j$ and $\overline{B}_j$ as indicated, the conditions
for these initial data to lie within the tube (at least for the portion of the metrics
pertaining to the parabolic region) are satisfied. We may assume that $\ve$ is small
enough, hence that $\underline{B}_j,\overline{B}_j>0$ are small enough, so that the
$C^1$ condition in \eqref{InitialDisc} is satisfied. Combined with the second inequality
above, this ensures that the initial data constructed here belong to $D_{\ve}^{\bt}$.
In turn, by Lemma~\ref{PWentrapment}, this ensures that the data satisfy \eqref{DefineParabolicPW}.

We next obtain globally defined initial data in
$(\Xi_\ve\bigcap\{\tau=\bt\})\bigcap D_{\ve}^{\bt}$ by using cutoff
functions to smoothly glue the metric $g=(\rd\sigma)^2+u^2 g_{\rm can}$
defined locally above to the formal solution in the intermediate and tip
regions, and (for $k$ odd) to that portion of the manifold which remains
nonsingular. More specifically, we proceed as follows.

For $\frac65e^{\gk\bt/5}\leq\sigma\leq\frac75e^{\gk\bt/5}$, the
construction above ensures that $z=\psi_s^2$ is as close as desired to
its value in the formal solution~\eqref{eq:z-expansion-near-neck},
namely
\[
\hat z_{\{n,k,b_k\}}(u)=c_k e^{-2\gk\tau}(1-u)^{1+2\gk} \qquad
\big(1>u\gg e^{-\gk\tau}\big).
\]
In particular, making $\ve$ smaller if necessary, we can ensure that
each initial datum lies between the barriers $z_- < z_+$
constructed in Lemma~\ref{OrderIntermediate} for all $u$ where
both it and the barriers are defined. One may thus extend the solution
by means of a cutoff function, obtaining $\tilde z$ with the
properties that $\tilde z=z$ at $u\big(\frac65e^{\gk\bt/5}\sigma\big)$
and $\tilde z=\hat z_{\{n,k,b_k\}}$ for all
$0\leq u\leq u\big(\frac75e^{\gk\bt/5}\sigma\big)$. Here we use the fact that
$\hat z_{\{n,k,b_k\}}$ smoothly extends to $u=0$ using the construction
in Section~\ref{approximate-sol-at-tip}. Moreover, $\hat z_{\{n,k,b_k\}}$
lies between the barriers constructed in Lemma~\ref{OrderTip}.

If $k$ is even, we repeat this step for
$\frac65e^{\gk\bt/5}\leq-\sigma\leq\frac75e^{\gk\bt/5}$, and we are done.

If $k$ is odd and
$\frac65e^{\gk\bt/5}\leq-\sigma\leq\frac75e^{\gk\bt/5}$, we again find
that each initial datum $\tilde z$ is as close as desired to the formal
solution
\[
\hat z_{\{n,k,b_k\}}(u)= \hat c_k e^{-2\gk\tau}(u^2-1)u^{-2} \qquad
\big(1<u\ll e^{\tau/2}\big),
\]
hence lies between the barriers $\tilde z_-<\tilde z_+$ constructed
in Lemma~\ref{OrderOuter}.  So we again extend $\tilde z$ by
means of a cutoff function until $u=\hat\delta e^{\tau/2}$, namely
$\psi=\hat\delta$. There we smoothly glue on a smooth punctured sphere
as in Section~8.2 of \cite{AK04}.  We omit further details.

\end{document}